\newtheorem{thm}{Theorem}[section]
\newtheorem{lem}[thm]{Lemma}
\newtheorem{prop}[thm]{Proposition}
\theoremstyle{definition}
\newtheorem{defn}[thm]{Definition}
\theoremstyle{remark}
\newtheorem{rem}[thm]{Remark}
\numberwithin{equation}{section}
\newcommand{\vertiii}[1]{{\left\vert\kern-0.25ex\left\vert\kern-0.25ex\left\vert #1 
    \right\vert\kern-0.25ex\right\vert\kern-0.25ex\right\vert}}
\begin{document}


\baselineskip=17pt


\title[]{Weighted Bilinear Multiplier Theorems in Dunkl Setting via Singular Integrals}
\author[SUMAN MUKHERJEE]{Suman Mukherjee$^\dagger$}
\address{Suman Mukherjee, School of Mathematical Sciences, National Institute of Science Education and Research\\
Bhubaneswar 752050, India. \newline Homi Bhabha National Institute, Training School Complex, Anushaktinagar, Mumbai 400094, India.}
\email{sumanmukherjee822@gmail.com} 
\author[Sanjay Parui]{Sanjay Parui}
\address{Sanjay Parui, School of Mathematical Sciences, National Institute of Science Education and Research\\
Bhubaneswar 752050, India. \newline Homi Bhabha National Institute, Training School Complex, Anushaktinagar, Mumbai 400094, India.}
\email{parui@niser.ac.in}
\thanks{First author is supported by research fellowship  from Department of Atomic Energy (DAE), Government of India.}
\thanks{$\dagger$ \tt{Corresponding author}}
\date{}

\begin{abstract}
The purpose of this article is to present one and two-weight inequalities for bilinear multiplier operators in Dunkl setting with multiple Muckenhoupt weights. In order to do so, new results regarding Littlewood-Paley type theorems and weighted inequalities for multilinear Calder\'on-Zygmund operators in Dunkl setting are also proved.
\end{abstract}

\subjclass[2010]{Primary 42B15, 42B20, 42B25; Secondary 47G10, 47B34, 47B38}
\keywords{Dunkl transform, Littlewood-Paley theory, Multilinear Calder\'on-Zygmund operators, Bilinear multipliers, Muckenhoupt weights, Weighted inequalities.}

\maketitle
\pagestyle{myheadings}
\markboth{SUMAN MUKHERJEE AND SANJAY PARUI}{BILINEAR MULTIPLIER THEOREMS IN DUNKL SETTING ...}


\section{Introduction}\label{intro}
 In 1989 C.F. Dunkl \cite{Dunkl} introduced Dunkl operators which can be treated as generalizations of the ordinary directional derivatives. It turns out that they are very useful tools to  study special functions with reflection symmetries. In fact, those operators allow one to build  up a parallel theory to that of classical Fourier analysis, which inherits most of the similar results. This Fourier analytic aspect of Dunkl theory has been intensively studied by several mathematicians over the past three decades. 
\par One of the well studied and trending topics in modern Harmonic analysis is the Fourier multipliers and its multilinear versions. For $f_1,\,f_2\in \mathcal{S}(\mathbb{R}^d)$, the class of all Schwartz functions, the bilinear Fourier multiplier operators is defined as 
\begin{equation*}
{\bf T}_{\bf m}(f_1,f_2)(x)=\int_{\mathbb{R}^{2d}}{\bf m}(\xi,\eta)\mathcal{F}f_1(\xi)\mathcal{F}f_2(\eta)e^{ix\cdot (\xi+\eta)}\,d\xi\,d\eta,
\end{equation*}
where ${\bf m}$ is some reasonable function on $\mathbb{R}^{2d}$ and $\mathcal{F}$ is the classical Fourier transform on $\mathbb{R}^d.$ The classical Coifman-Meyer \cite{CoifmenNHAOTAPIBL} (bilinear) multiplier theorem from 1970's states that if ${\bf m}$ is a bounded function on $\mathbb{R}^{2d}$, which is smooth away from the origin and satisfies the decay condition:
\begin{equation}\label{coif-myer condition classical setting}
|\partial^{\alpha}_{\xi}\partial^{\beta}_{\eta}{\bf m}(\xi,\eta)|\leq C_{\alpha,\,\beta}\left(|\xi|+|\eta|\right)^{-(|\alpha|+|\beta|)}
\end{equation}
for all multi-indices $\alpha,\beta \in \left(\mathbb{N}\cup\{0\}\right)^d$, then the operator ${\bf T}_{\bf m}$ is bounded from $L^{p_1}(\mathbb{R}^d)\times L^{p_2}(\mathbb{R}^d)$ to $L^p(\mathbb{R}^d)$ for $1<p,p_1,p_2<\infty$ with the relation $1/p=1/p_1+1/p_2$. Later significant improvements has been done to this result by improving the range of $p$  \cite{GrafakosMCZT, KenigMEAFI} and by reducing the smoothness condition on ${\bf m}$ \cite{TomitaAHTMTFMO}.
In 2010's many authors were concerned with weighted inequalities for the bilinear multipliers. In this direction, weighted inequalities with classical $A_p$ weights were proved by Fujita and Tomita \cite{FujitaWNIFMFM} and Hu et. al \cite{HuWNIFMSIOAA} under H\"ormander condition which is weaker than the condition (\ref{coif-myer condition classical setting}). Also, Bui and Duong \cite{BuiWNIFMOAATFMO} and Li and Sun \cite{LiWEFMFM} presented similar results but with multiple weights introduced by Lerner et. al \cite{LernerNMF} in place of the classical weights.
\par In Dunkl setting too there are analogous results \cite{AmriLABMOFTDT, HejnaHMT} to classical Fourier multiplier operators for the non-weighted cases. However, for bilinear multipliers there is a lack of proper analogue to the classical setting even for the non-weighted case. In fact, in Dunkl setting boundedness of bilinear multiplier operators are known only in two special cases. The first case is due to Wr\'{o}bel \cite{WrobelABMVAFC}, where he assumes that the multiplier ${\bf m}$ is radial in both the variables, that is there exists a function ${\bf m}_0$ on $(0, \infty) \times (0, \infty)$ such that ${\bf m}(\xi,\eta)={\bf m}_0(|\xi|,|\eta|)$ and the second one is obtained by Amri et al. \cite{AmriLABMOFTDT}, where they restricted themselves to the one-dimensional case only. This motivates us to address the gap and acquire a suitable counterpart to the classical results for bilinear multipliers in the Dunkl setting.
\par Our first aim in this paper is to prove a Coifman-Meyer type multiplier theorem in Dunkl setting (Theorem \ref{bilinear multi main thm}) without those extra assumptions mentioned above.
The main obstacle that comes in obtaining such results is due to lack of appropriate Littlewood-Paley type theorems in Dunkl setting. Thanks to recent results \cite{HejnaRODTONRK} on pointwise estimates of multipliers, which allows us to overcome such difficulties and present a Littlewood-Paley type theory. Once such tools are available, the rest of our work lies in properly adapting some classical techniques (\cite[pp. 67-71]{MuscaluBook2CAMHA}, \cite[Theorem 4.1]{WrobelABMVAFC}) in Dunkl set up along with some new ideas.
\par Moving forward to the next step, we want to prove one and two-weight inequalities for the bilinear Dunkl-multiplier operators (Theorem \ref{one weight bilinear multi} and Theorem \ref{two weight bilinear multi}) with multiple weights and also for the exponents beyond the Banach range $1<p<\infty$. In our results, the smoothness condition on the multiplier $\bf{m}$  and weight classes are not the same to that of the corresponding results in the classical case (see \cite[Theorem 1.2]{LiWEFMFM} and \cite[Theorem 4.2]{BuiWNIFMOAATFMO}). To prove the boundedness results, the approaches used in the classical setting highly depend on the fact that the two transposes of the operator ${\bf T}_{\bf m}$ are also bilinear multiplier operators with multipliers ${\bf m}(-\xi-\eta, \eta)$ and ${\bf m}(\xi,-\xi-\eta)$. In Dunkl case, no mechanism is known to find the multipliers of the so called transposes of 
the Dunkl bilinear multiplier operator. Moreover, our results are different than the classical case as we have also included the two-weight case and the end-point cases. Hence, a routine adaptation of the classical techniques is not sufficient to achieve these results. In order to obtain these weighted inequalities, an attempt has been made here to develop the theory of multilinear Calder\'on-Zygmund type singular integrals in Dunkl setting which is not available in the literature as of our best knowledge.
\par In classical setting the theory of multilinear singular integrals has its own rich history \cite{CoifmanOCOSIABSI, CoifmenADOPD, DoungMOFMSIWNSK, GrafakosMWNIFMMSIWNSK, GrafakosMCZT, LernerNMF}. But in Dunkl setting, only linear Calder\'on-Zygmund type operators are introduced recently by Tan et. al \cite{TanSIOT1TLPT}. In this paper, we define $m$-linear Dunkl-Calder\'on-Zygmund operators which can be treated as Dunkl counter part of the multilinear Calder\'on-Zygmund operators in classical setting introduced by Grafakos and Torres \cite{GrafakosMCZT}. Our results regrading multilinear Calder\'on-Zygmund operators may be of independent interest and useful in future explorations.
\par Our main results regarding Dunkl-Calder\'on-Zygmund operators are one and two-weight inequalities (Theorem \ref{one weight calderon Zygmund} and Theorem \ref{two weight calderon Zygmund}). In the proofs of these theorems, we closely follow the scheme used in \cite{LernerNMF} (see also \cite{GrafkosMAMS}). However, due to involvement of the action of the reflection group in Definition \ref{defn of multi Dunkl-Calderon _Zygmung oper}, some new ideas regarding $G$-orbits and results for spaces of homogeneous type (see Definition \ref{defn space of homogeneous type}) are required to complete the proofs. Here we also mention that by arguing as in \cite[p. 10]{TanSIOT1TLPT}, we can see that the smoothness conditions (\ref{smoothness estimate of kernel}) assumed on the kernel are weaker than that of the Calder\'on-Zygmund singular integral operators given in spaces of homogeneous type \cite[p. 20]{GrafkosMAMS}. So the results for singular integrals in spaces of homogeneous type do not imply our results. Finally, we show that bilinear multipliers belong to the class of bilinear Dunkl-Calder\'on-Zygmund operators and we conclude the proofs of the weighted inequalities for bilinear multipliers.

\par In the theory of Dunkl analysis, our findings on multilinear singular integral operators pave the way for studying various multilinear operators within the Dunkl framework. Notably, within this article, we have derived that bilinear multipliers represent specific class of such operators. Furthermore, we anticipate the extension of this work to encompass other type of operators, including multilinear Dunkl-pseudo-differential operators belonging to some particular symbol classes, much like in the classical case (\cite{CaoWASTEFTMPDO, GrafakosMCZT}). Also, this, in turn, prompts us to explore other types of multilinear singular integrals, such as rough singular integrals and singular integrals with Dini-type conditions, along with their associated commutators. Moreover, the results concerning bilinear multipliers may have potential applications in establishing fractional Leibniz-type rules for the Dunkl Laplacian and various Kato-Ponce-type inequalities. For instance, in \cite{WrobelABMVAFC}, a fractional Leibniz-type rules for the Dunkl Laplacian  has been proved for the group $\mathbb{Z}_2^d$ using bilinear multiplier theorem with radial multipliers. 
\par The organization of the rest of the paper is as follows. In Section \ref{prel}, we provide some notations and preliminaries of Dunkl setting. Section \ref{Littlewood-Plaey section} is devoted to establish Littlewood-Paley type theorems in Dunkl setting. In Section \ref{homogeneous space section}, we discuss some results on spaces of homogeneous type and on Muckenhoupt weights. We introduce a multilinear set up  in Section \ref{statement main theorem}. Definition and statements of main results (Theorem \ref{two weight calderon Zygmund} and Theorem \ref{one weight calderon Zygmund}) for multilinear Dunkl-Calder\'on-Zygmund operators are given in Section \ref{statemnt of multi Calderon} and main results (Theorem \ref{two weight bilinear multi} and Theorem \ref{one weight bilinear multi}) regarding bilinear Dunkl multiplier operators are given in Section \ref{statemnt of bilinear multi}. Section \ref{proof main theorem singular integral} contains proofs of the weighted inequalities for the Calder\'on-Zygmund operators in Dunkl setting. Finally, in Section \ref{proof main theorem multiplier}, we first state and prove Coifman-Meyer type multiplier theorem in Dunkl setting (Theorem \ref{bilinear multi main thm}). Then using weighted boundedness of the multilinear singular integrals and adapting the recent results of Dziuba\'{n}ski and Hejna \cite{HejnaRODTONRK} to bilinear setting, we conclude the proofs of the weighted inequalities for bilinear multipliers. Throughout the paper $C$ denotes a universal constant (depending on the parameters involved) which may not be the same in every line and sometimes we will write the parameters as suffixes on $C$ to show the dependence of the parameters.

\section{Preliminaries of Dunkl Theory}\label{prel}

Let us consider the usual inner product $\langle \cdot\ ,\cdot\rangle$ and the usual norm $|\cdot| := \sqrt{\langle \cdot\ , \cdot \rangle}$  on $\mathbb{R}^d$. For any $\lambda\in \mathbb{R}^d \setminus \{0\} $, the \emph{reflection} $\sigma_{\lambda}$ with respect to the hyperplane $\lambda^{\bot }$ orthogonal to $\lambda$ is given by
 $$\sigma_{\lambda}(x)=x-2\frac{\langle x,\lambda \rangle}{|\lambda|^2}\lambda.$$ Let $ R$ be a finite subset of $\mathbb{R}^d$ which does not contain 0. If  $R$ satisfies $R \cap \mathbb{R}\lambda = \{ \pm \lambda \} $ for all $\lambda \in R $ and $\sigma _{\lambda} (R) = R $ for all $\lambda \in R$, then $R$ is called a \emph{root system}. Throughout the paper we will consider a fixed  normalized root system $R$, that is $|\lambda|=\sqrt{2},\ \forall \lambda \in R$. The subgroup $G$  generated by reflections $\{ \sigma_ \lambda : \lambda \in R\}$ is called the  \emph{reflection group} (or  \emph{Coxeter group}) associated with $R$ and a $G$-invariant function $k :R\rightarrow \mathbb{C}$, is known as a  \emph{multiplicity function}. In this paper, we take a fixed multiplicity function $k\geq 0$. Let $h_k$ be the $G$-invariant homogeneous weight function given by $h_k(x) = \prod \limits _ {\lambda \in R} | \langle x , \lambda \rangle|^{k (\lambda)}$ and  
$d\mu_k(x)$ be the normalized measure $c_k h_k(x)dx$, where 
$$c_k^{-1}=\int_{\mathbb{R}^d}e^{-{|x|^2}/{2}}\,h_k(x)\,dx,$$
$d_k=d+\gamma_k $ and $\gamma_k=\sum\limits_{\lambda \in R}k(\lambda)$.
\par Let $x\in \mathbb{R}^d$, $r>0$ and $B(x,r)$ be the ball with centre at $x$ and radius $r$. Then the volume $\mu_k(B(x,r))$ of $B(x,r)$ is given by \footnote{ The symbol $\sim$ between two positive expressions means that their ratio remains between two positive constants.}
\begin{equation}\label{Volofball}
    \mu_k(B(x,r))\sim \ r^d\prod\limits_{\lambda \in R}\left(|\langle x,\lambda \rangle|+r\right)^{k(\lambda)}.
\end{equation}
It is immediate from above  that if $r_2>r_1>0$, then
\begin{eqnarray}\label{VOLRADREL}
   C\left(\frac{r_1}{r_2}\right)^{d_k}\leq  \frac{\mu_k(B(x,r_1))}{\mu_k(B(x,r_2))}\leq C^{-1} \left(\frac{r_1}{r_2}\right)^{d}.
\end{eqnarray}
Let $d_G(x,y)$ denote the distance between the $G$-orbits of $x$ and $y$, that is, $d_G(x,y)=\min\limits_{\sigma \in G}|\sigma(x)-y|$ and for any $r>0$, we write $V_G(x,y,r)=\max\,\left\{\mu_k(B(x,r)),\mu_k(B(y,r))\right\}$. Then from the expression for volume of a ball, it follows that
\begin{equation}\label{V(x,y,d(x,y) comparison}
V_G(x,y,d_G(x,y))\sim \mu_k(B(x,d_G(x,y))\sim \mu_k(B(y,d_G(x,y)).
\end{equation}
Let us define the orbit $\mathcal{O}(B)$ of the ball $B$ by 
$$\mathcal{O}(B)=\big\{y\in \mathbb{R}^d: d_G(c_B,y)\leq r(B)\big\}=\bigcup\limits_{\sigma \in G}\sigma (B),$$ 
where $c_B$ denotes the centre and $r(B)$ denotes the radius of the ball $B$. It then follows that
\begin{equation}\label{orbit volume compa}
    \mu_k(B)\leq \mu_k(\mathcal{O}(B))\leq |G|\,\mu_k(B).
\end{equation}
Although $d_G$ satisfies the triangle inequality, it is \emph{not} a metric on $\mathbb{R}^d$. Also, note that for any $x,y\in \mathbb{R}^d$, we have $d_G(x,y)\leq |x-y|.$
\par The \emph{differential-difference operators} or the \emph{Dunkl operators} $T_{\xi}$ introduced by  C.F. Dunkl \cite{Dunkl}, is given by
\begin{eqnarray*}
T_{\xi} f(x)= \partial_{\xi} f(x)+\sum\limits_{\lambda \in R} \frac{k(\lambda )}{2} \langle\lambda, \xi \rangle \frac{f(x) - f(\sigma _\lambda  x)}{\langle \lambda  , x \rangle}.
\end{eqnarray*}
The Dunkl operators $T_{\xi}$ are the $k$-deformations of the directional derivative operators $\partial_{\xi}$ and coincide with them in the case $k=0$.
For a fixed $y\in \mathbb{R}^d$, there is a unique real analytic solution for the system
$T_{\xi} f =  \langle y,\xi \rangle  f $  satisfying $f(0)=1$. The solution $f(x)=E_k(x,y)$ is known as the \emph{Dunkl kernel}. 
For two reasonable functions $f$ and $g$, the following integration by parts formula is well known:
 $$\int_{\mathbb{R}^d} T_{\xi}\,f(x) g(x) \,d\mu_k(x)=-\int_{\mathbb{R}^d}f(x)T_{\xi}\,g(x) \,d\mu_k(x).$$
 Also, if at least one of $f$ or $g$ is $G$-invariant then the Leibniz-type rule holds:
 $$T_\xi(fg)(x)=T_\xi f(x)\, g(x)+f(x)T_\xi g(x).$$
\par Let us consider the canonical orthonormal basis $\{e_j : j = 1, 2, \cdots, d\}$ in $\mathbb{R}^d$ and set $T_j = T_{e_j}$ and $\partial_{j}=\partial_{e_j}$. For any multi-index $\alpha=(\alpha_1,\alpha_2,\cdots, \alpha_d)\in \left(\mathbb{N}\cup\{0\}\right)^d$, we use the following notations.\\
$\bullet$ $|\alpha|=(\alpha_1+\alpha_2+\cdots+ \alpha_d),$\\
$\bullet$ $\partial_j^0=I$, $\partial^\alpha=\partial_1^{\alpha_1}\circ \partial_2^{\alpha_2}\circ\cdots \circ\partial_d^{\alpha_d},$\\
$\bullet$ $T_j^0=I$, $T^\alpha=T_1^{\alpha_1}\circ T_2^{\alpha_2}\circ\cdots \circ T_d^{\alpha_d}.$\\
Sometimes we will write $\partial_{\xi}^{\alpha}$ to indicate that the partial derivatives are taken with respect to the variable $\xi.$
\par The Dunkl kernel $E_k(x,y)$ which actually generalizes the exponential functions $e^{<x,y>}$, has a unique
extension to a holomorphic function on $\mathbb{C}^d\times \mathbb{C}^d$. We list below few properties of the Dunkl kernel (see \cite{RoslerPODIO,RoslerDOTA, RoslerAPRF} for details).\\
$\bullet$ $E_k(x,y)=E_k(y,x)$ for any $x,\ y\in \mathbb{C}^d,$\\
$\bullet$ $E_k(tx,y)=E_k(x,ty)$ for any $x,\ y\in \mathbb{C}^d$ and $t\in \mathbb{C},$\\
$\bullet$ $|\partial^{\alpha}_zE_k(ix,z)|\leq |x|^{|\alpha|}$ for any $x,\ z\in \mathbb{R}^d$ and $\alpha\in \left(\mathbb{N}\cup\{0\}\right)^d.$
\par Let $L^p(\mathbb{R}^d,d\mu_k)$ denote the space of complex valued measurable functions $f$ such that 
$$||f||_{L^p(d\mu_k)}:=\Big(\int_{\mathbb{R}^d}|f(x)|^p\,d\mu_k(x)\Big)^{1/p}<\infty$$
and  $L^{p,\,\infty}(\mathbb{R}^d,d\mu_k)$ be the corresponding weak space with norm
$$||f||_{L^{p,\,\infty}(d\mu_k)}:=\sup\limits_{t>0}\,t\,[\mu_k\big(\{x\in \mathbb{R}^d:|f(x)|>t\}\big)]^{1/p}<\infty.$$
 For any $f$ $\in L^1(\mathbb{R}^d,d\mu_k)$, the \emph{Dunkl transform} of $f$ is defined by
\begin{eqnarray*}
\mathcal{F}_kf(\xi)=\int_{\mathbb{R}^d}f(x)E_k(-i\xi,x)\,d\mu_k(x).
\end{eqnarray*}
The following properties of Dunkl transform are known in the literature \cite{DunklHTATFRG, deJeuTDT}.\\
$\bullet$ $\mathcal{F}_k$ preserves the space $\mathcal{S}(\mathbb{R}^d ),$\\
$\bullet$ $\mathcal{F}_k$ extends to an isometry on  $L^2(\mathbb{R}^d,d\mu_k)$ (Plancherel formula), that is,
$$||\mathcal{F}_kf||_{L^2(d\mu_k)}=||f||_{L^2(d\mu_k)},$$
$\bullet$ If both $f$ and $\mathcal{F}_kf$ are in $L^1(\mathbb{R}^d,d\mu_k)$, then the following Dunkl inversion formula holds
$$f(x)=\mathcal{F}_k^{-1}(\mathcal{F}_kf)(x)=:\int_{\mathbb{R}^d}\mathcal{F}_kf(\xi)E_k(i\xi,x)\,d\mu_k(\xi),$$
$\bullet$ From definition of the Dunkl kernel, for any $f\in \mathcal{S}(\mathbb{R}^d )$, the following relations holds :
$$T_j\mathcal{F}_kf(\xi)=-\mathcal{F}_k(i(\cdot)_jf)(\xi) \text{ and } \mathcal{F}_k(T_jf)(\xi)=i\xi_j \mathcal{F}_kf(\xi).$$

\par The \emph{Dunkl translation} $\tau^k_xf$ of a function $f\in  L^2(\mathbb{R}^d,d\mu_k)$, is defined in \cite{ThangaveluCOMF} in terms of Dunkl transform by 
$$\mathcal{F}_k(\tau^k_xf)(y)=E_k(ix,y)\mathcal{F}_kf(y).$$
Since $E_k(ix,y)$ is bounded, the above formula defines $\tau^k_x$ as a bounded operator on $L^2(\mathbb{R}^d,d\mu_k).$ We collect few properties of the Dunkl translations which will be used later.\\
$\bullet$ For $f\in \mathcal{S}(\mathbb{R}^d )$, $\tau^k_x$ can be pointwise defined as 
$$\tau^k_xf(y)=\int_{\mathbb{R}^d}E_k(ix,\xi)E_k(iy,\xi)\mathcal{F}_kf(\xi)\,d\mu_k(\xi),$$
$\bullet$ $\tau_y^kf(x)=\tau_{x}^kf(y)$ for any $f$ in $\mathcal{S}(\mathbb{R}^d )$,\\
$\bullet$ $\tau^k_x(f_t)=(\tau^k_{t^{-1}x}f)_t,\ \forall x\in\mathbb{R}^d$ and $\forall f\in\mathcal{S}(\mathbb{R}^d)$, where $f_t(x)=t^{-d_k}f(t^{-1}x)$ and $\ t>0$.\\
Although $\tau^k_x$ is bounded operator for radial functions in $L^p(\mathbb{R}^d,d\mu_k)$ (see \cite{RoslerAPRF}), it is not known whether Dunkl translation is bounded operator or not on whole $L^p(\mathbb{R}^d,d\mu_k)$ for $p\neq2.$

\par For $f,g \in L^2(\mathbb{R}^d, d\mu_k)$, the \emph{Dunkl convolution} $f*_kg$
of $f$ and $g$ is defined by 
$$f*_kg(x)=\int_{\mathbb{R}^d}f(y)\tau^k_xg(-y)\,d\mu_k(y).$$
$*_k$ has the following basic properties (see \cite{ThangaveluCOMF} for details).\\
$\bullet$ $f*_kg(x)=g*_kf(x)$ for any $f, g \in  L^2(\mathbb{R}^d, d\mu_k)$;\\
$\bullet$ $\mathcal{F}_k(f*_kg)(\xi)=\mathcal{F}_kf(\xi) \mathcal{F}_kg(\xi)$ for any $f, g \in  L^2(\mathbb{R}^d, d\mu_k)$.\\
\par As mentioned for the Dunkl operators case, for $k=0$ Dunkl transform becomes the Euclidean Fourier transform and the Dunkl translation operator becomes the usual translation. In this sense, Dunkl transform is a generalization of Euclidean Fourier transform and putting $k=0$, we can recover the corresponding results in the classical setting from our results.

\section{Littlewood-Paley Type Theorems in Dunkl Setting}\label{Littlewood-Plaey section}
In this section, we prove two different Littlewood-Paley type theorems which are the main ingredients in the proof of Theorem \ref{bilinear multi main thm}. We start with the following theorem. A particular case \cite[Theorem 5.2]{DaiLRTAMBROFTDT} of this theorem is known for the group $\mathbb{Z}_2^d$ with Muckenhoupt weights. 
\begin{thm}\label{Littlewood-Paley l2 thm}
 Let $u\in \mathbb{R}^d, 1<p<\infty$. Let $\psi$ be a smooth function on $\mathbb{R}^d$ such that $supp\, \psi\subset\{\xi\in \mathbb{R}^d:1/r\leq |\xi|\leq r\}$ for some $r>1$. For $j\in \mathbb{Z}$, define $\psi_j(\xi)=\psi (\xi/2^j)$ and for $f\in \mathcal{S}(\mathbb{R}^d)$ define
$$\psi(u,D_k/2^j)f(x)=\int_{\mathbb{R}^d}\psi _j(\xi) \,e^{i\langle u,\, \xi\rangle /2^j}\mathcal{F}_kf(\xi)E_k(ix,\xi)\,d\mu_k(\xi).$$ Then 
$$ \Big\|\Big(\sum\limits_{j\in \mathbb{Z}}|\psi(u,D_k/2^j)f|^2\Big)^{1/2}\Big\|_{L^p(d\mu_k)}\leq C\,(|1+|u|)^{n} ||f||_{L^p(d\mu_k)},$$
where $n=\lfloor d_k \rfloor+2$ and $C$ is independent of $u$.
\end{thm}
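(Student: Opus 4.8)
The plan is to reduce the vector-valued estimate to a Calderón–Zygmund argument for an operator taking values in $\ell^2(\mathbb{Z})$. First I would introduce the $\ell^2$-valued kernel $\vec{K}(x,y) = \{K_j(x,y)\}_{j\in\mathbb{Z}}$, where $K_j$ is the Dunkl convolution kernel of the operator $f\mapsto \psi(u,D_k/2^j)f$; explicitly $K_j(x,y) = \tau^k_x\big(\mathcal{F}_k^{-1}(\psi_j(\cdot)e^{i\langle u,\cdot\rangle/2^j})\big)(-y)$, which is well-defined because $\psi_j$ is compactly supported and smooth. Writing $\Psi^{(u)}_j = \mathcal{F}_k^{-1}(\psi_j\, e^{i\langle u,\cdot\rangle/2^j})$, the scaling identity $\psi_j(\xi) = \psi(\xi/2^j)$ together with the dilation property $\tau^k_x(g_t) = (\tau^k_{t^{-1}x}g)_t$ shows that $\Psi^{(u)}_j$ is, up to the Dunkl dilation by $2^j$, a fixed Schwartz-type function $\Phi^{(u)} := \mathcal{F}_k^{-1}(\psi(\cdot)e^{i\langle u,\cdot\rangle})$, whose Schwartz seminorms I would bound by $C(1+|u|)^{n}$ using the third listed bound $|\partial^\alpha_z E_k(ix,z)|\le |x|^{|\alpha|}$ and integration by parts in the Dunkl transform (the factor $e^{i\langle u,\cdot\rangle}$ contributes powers of $|u|$ after differentiating, and $n=\lfloor d_k\rfloor+2$ is exactly what is needed so that $\Phi^{(u)}$ decays like an integrable kernel in the $d_G$-metric geometry with measure $\mu_k$).

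Next I would verify the two Calderón–Zygmund conditions for $\vec K$ relative to the space of homogeneous type $(\mathbb{R}^d, d_G, \mu_k)$. For the size estimate one needs $\|\vec K(x,y)\|_{\ell^2} \lesssim (1+|u|)^n / \mu_k(B(x,d_G(x,y)))$; this follows by summing $|K_j(x,y)|^2$ over $j$, splitting the sum at $2^j \sim d_G(x,y)^{-1}$ and using, for small $j$, the crude bound coming from $\|\Phi^{(u)}\|_{L^1}$-type control after the dilation (together with the volume-doubling inequality (\ref{VOLRADREL})), and for large $j$ the rapid decay of $\Phi^{(u)}$ at the appropriately rescaled distance $2^j d_G(x,y)$, which beats any polynomial. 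The smoothness (Hörmander) estimate in the $y$-variable, $\|\vec K(x,y) - \vec K(x,y')\|_{\ell^2} \lesssim (1+|u|)^n\, \frac{d_G(y,y')^{\delta}}{d_G(x,y)^{\delta}\,\mu_k(B(x,d_G(x,y)))}$ for $d_G(y,y')\le \tfrac12 d_G(x,y)$, is obtained in the same way but using one derivative of $\Phi^{(u)}$ (which costs an extra $(1+|u|)$ absorbed into the already generous power $n$); here I would use the standard fact in the Dunkl setting that differences of Dunkl translations are controlled by the metric $d_G$ and the ball volumes, exactly as in the linear Calderón–Zygmund theory of Tan et al.\ referenced in the introduction.

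Finally, boundedness of the scalar operator $f\mapsto \psi(u,D_k/2^j)f$ on $L^2(d\mu_k)$ is immediate from Plancherel and $\|\psi_j e^{i\langle u,\cdot\rangle/2^j}\|_\infty \le \|\psi\|_\infty$, uniformly in $j$; hence the $\ell^2$-valued operator $f\mapsto \{\psi(u,D_k/2^j)f\}_j$ is bounded from $L^2(d\mu_k)$ to $L^2(d\mu_k;\ell^2)$ with norm $\lesssim 1$. Combining the $L^2$-bound with the $\ell^2$-valued Calderón–Zygmund conditions above, the vector-valued Calderón–Zygmund theorem on the space of homogeneous type $(\mathbb{R}^d,d_G,\mu_k)$ — whose unweighted $L^p$ version, $1<p<\infty$, I may invoke from the results recalled in Section \ref{homogeneous space section} — yields the claimed $L^p$ bound with constant $C(1+|u|)^n$. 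The main obstacle I anticipate is the bookkeeping in the kernel estimates: one must carry the $u$-dependence through the Dunkl-translation difference bounds and the splitting of the $j$-sum, and be careful that $d_G$ is only a quasi-metric (not a metric), so the orbit structure and the comparison (\ref{V(x,y,d(x,y) comparison}) must be used in place of ordinary triangle-inequality manipulations; getting the exponent $n=\lfloor d_k\rfloor + 2$ to be exactly sufficient for both the $L^1$-control of $\Phi^{(u)}$ and its first derivative is the delicate quantitative point.
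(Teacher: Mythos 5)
Your outline follows the paper's skeleton: the same $\ell^2(\mathbb{Z})$-valued kernel $\{\tau^k_x\Psi^u_j(-y)\}_j$ with $\Psi^u=\mathcal{F}_k^{-1}\big(\psi(\cdot)e^{i\langle u,\cdot\rangle}\big)$, Plancherel for the $L^2$ bound, a H\"ormander-type condition for the vector-valued kernel, and a splitting of the $j$-sum at $2^jd_G(x,y)\sim 1$. But there is a genuine gap at the decisive step: the pointwise control of $\tau^k_x\Psi^u_j(-y)$ and of its differences cannot be extracted from Schwartz seminorms of $\Psi^u$ by ``integration by parts in the Dunkl transform'' using $|\partial^\alpha_z E_k(ix,z)|\le|x|^{|\alpha|}$ --- that bound carries no decay at all, and in the general Dunkl setting the translation of a band-limited Schwartz function decays rapidly only in the orbit distance $d_G(x,y)$, while in the Euclidean distance one gets only a single factor of order $(1+2^j|x-y|)^{-1}$. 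So your claim that $\Psi^u$ ``decays like an integrable kernel in the $d_G$-geometry'' after translation, and your appeal to a ``standard fact'' that differences of Dunkl translations are controlled by $d_G$ and ball volumes, are precisely the nontrivial content of the proof: the paper obtains these bounds from the recent pointwise estimates of Dziuba\'nski and Hejna \cite{HejnaRODTONRK} (their eqs.\ (4.30)--(4.31)), applied so that the $C^n$ norm of $\psi(\cdot)e^{i\langle u,\cdot\rangle}$ produces the factor $(1+|u|)^n$, and it is in the resulting two-distance bound (one Euclidean factor, $n-1$ orbit-distance factors) that $n=\lfloor d_k\rfloor+2$ is used to make the sum over $2^jd_G(x,y)>1$ converge. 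Without an input of this strength your size and smoothness estimates are unproved, and they do not follow from the linear theory of \cite{TanSIOT1TLPT}.

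The final black box is also not available in the form you invoke it. The pair $(\mathbb{R}^d,d_G,\mu_k)$ is not a space of homogeneous type ($d_G$ vanishes between distinct points of the same $G$-orbit), Section \ref{homogeneous space section} records only maximal-function and weight results (no vector-valued Calder\'on--Zygmund theorem), and a generic vector-valued theorem relative to the Euclidean homogeneous structure $(\mathbb{R}^d,|x-y|,d\mu_k)$ would require H\"ormander conditions in $|x-y|$ that this kernel does not satisfy, because of the weak Euclidean decay noted above. What the paper uses instead is the Dunkl-specific Banach-valued singular integral theorem of Amri et al.\ \cite{AmriSIOIDS}, whose hypotheses are exactly the mixed conditions verified there: integrals over $\{|y-y'|\le d_G(x,y)/2\}$ of the $\ell^2$-norm of kernel differences, bounded by $(1+|u|)^n\,|y-y'|/\big(V_G(x,y,d_G(x,y))\,d_G(x,y)\big)$. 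If you replace your two hand-waved ingredients by the Dziuba\'nski--Hejna estimates and that theorem, your outline becomes the paper's argument; as written, both the kernel bounds and the boundedness criterion you plan to cite are gaps.
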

\begin{proof}
We will use the theory of Banach-valued singular integral operators \cite[Theorem 3.1]{AmriSIOIDS} to prove the above theorem. The $L^2$-case follows in similar way to the classical case \cite[p. 160]{DuoanBook}. Using Plancherel formula for Dunkl transform, we get
\begin{eqnarray*}
    \Big\|\Big(\sum\limits_{j\in \mathbb{Z}}|\psi(u,D_k/2^j)f|^2\Big)^{1/2}\Big\|^2_{L^2(d\mu_k)}&=& \int_{\mathbb{R}^d}\sum\limits_{j\in\mathbb{Z}}|\psi_j(x)\,e^{i\langle u,\, \xi\rangle /2^j}|^2|\mathcal{F}_kf(x)|^2\,d\mu_k(x)\\
    &\leq&\int_{\mathbb{R}^d}\sum\limits_{j\in\mathbb{Z}}|\psi_j(x)|^2|\mathcal{F}_kf(x)|^2\,d\mu_k(x)\\
    &\leq& C_r\, ||f||^2_{L^2(d\mu_k)}
\end{eqnarray*}
 where in the last step we have used the fact that for any $x$ only a fixed finite number of $j$'s (depending on $r$) will contribute in the sum. This concludes the $L^2$-case.
 \par Let $\Psi^{u}\in \mathcal{S}(\mathbb{R}^d)$ be such that $\mathcal{F}_k\Psi^{u}(\xi)=\psi(\xi)e^{i\langle u,\, \xi\rangle }$ and define $\Psi^{u}_j(\xi)=2^{jd_k}\Psi^{u}(2^j\xi)$. Then $\mathcal{F}_k\Psi^{u}_j(\xi)=\psi_j(\xi)e^{i\langle u,\, \xi\rangle /2^j}$ and we also have that
$$\Big(\sum\limits_{j\in \mathbb{Z}}|\psi(u,D_k/2^j)f(x)|^2\Big)^{1/2}=\Big(\sum\limits_{j\in \mathbb{Z}}|f*_k \Psi^{u}_j(x)|^2\Big)^{1/2}.$$
Thus to apply the above mentioned Theorem, we only need to show that 
$$\int\limits_{|y-y'|\leq d_G(x,y)/2}\|\tau^k_x\Psi^{u}_j(-y)-\tau^k_x\Psi^{u}_j(-y')\|_{\ell^2(\mathbb{Z})}\,d\mu_k(x)\leq C\,(|1+|u|)^{n}$$
$$\text{and }\int\limits_{|y-y'|\leq d_G(x,y)/2}\|\tau^k_y\Psi^{u}_j(-x)-\tau^k_{y'}\Psi^{u}_j(-x)\|_{\ell^2(\mathbb{Z})}\,d\mu_k(x)\leq C\,(|1+|u|)^{n}.$$
Again to prove the above two inequalities, it is enough to show that for $x,y,y'\in \mathbb{R}^d$ with $|y-y'|\leq d_G(x,y)/2$,
\begin{equation}\label{vectorvalued kernel1}
\|\tau^k_x\Psi^{u}_j(-y)-\tau^k_x\Psi^{u}_j(-y')\|_{\ell^2(\mathbb{Z})}\leq\frac{ C\,(|1+|u|)^{n}}{V_G\left(x,y,d_G(x,y)\right)}\,\frac{|y-y'|}{d_G(x,y)}
\end{equation}
\begin{equation}\label{vectorvalued kernel2}
\|\tau^k_y\Psi^{u}_j(-x)-\tau^k_{y'}\Psi^{u}_j(-x)\|_{\ell^2(\mathbb{Z})}\leq \frac{C\,(|1+|u|)^{n}}{V_G\left(x,y,d_G(x,y)\right)}\,\frac{|y-y'|}{d_G(x,y)}.
\end{equation}

\par We will only prove (\ref{vectorvalued kernel1}), as the proof of (\ref{vectorvalued kernel2}) follows from (\ref{vectorvalued kernel1}) by symmetry.
\par Now using the formula for Dunkl translation and the  definition of $\Psi^u_j$, we have 
\begin{equation}\label{integral representation of the kernel}
\tau^k_x\Psi^{u}_j(-y)=2^{jd_k}\int_{\mathbb{R}^d}\psi(\xi)\,e^{i\langle u,\, \xi\rangle }E_k(i\xi,2^jx)E_k(-i\xi,2^jy)\,d\mu_k(\xi).
\end{equation}
Next, we calculate $\|\psi(\cdot)\,e^{i\langle u,\, \cdot\rangle }\|_{C^n(\mathbb{R}^d)}$. Using usual Leibniz rule for any multi-index $\alpha$, we have 
\begin{eqnarray*}
\partial^{\alpha}\big(\psi(\xi)\,e^{i\langle u,\, \xi\rangle }\big)&=&\sum\limits_{\beta\leq\alpha}\binom {\alpha}{\beta} \partial^{\beta}e^{i\langle u,\, \xi\rangle }\, \partial^{\alpha -\beta} \psi(\xi),
    \end{eqnarray*}
    where the summation ranges over all multi-indices $\beta$ such that $\beta_j\leq\alpha_j$ for all $1\leq j\leq d.$
\par Hence 
    \begin{eqnarray}\label{growth of u}
\|\psi(\cdot)\,e^{i\langle u,\, \cdot\rangle }\|_{C^n(\mathbb{R}^d)}&=&\sup\limits_{\xi\in\mathbb{R}^d,\, |\alpha|\leq n}| \partial^{\alpha}\psi(\xi)\,e^{i\langle u,\, \xi\rangle }|\\
       &\leq& C\, (1+|u|)^n \sup\limits_{|\alpha|\leq n}\|\partial^{\alpha} \psi\|_{L^{\infty}}.\nonumber
    \end{eqnarray}
Next, we estimate $|\tau^k_x\Psi^{u}_j(-y)-\tau^k_{x}\Psi^{u}_j(-y')|$. We calculate the estimate in two parts:
\\\\
\underline{If $|2^jy-2^jy'|\leq 1$:}\\\\
In view of (\ref{integral representation of the kernel}), applying \cite[eq.(4.31)]{HejnaRODTONRK} and using the inequalities (\ref{growth of u}) and (\ref{VOLRADREL}), we have
\begin{eqnarray}\label{less than 1 case  1st estimate}
    &&|\tau^k_x\Psi^{u}_j(-y)-\tau^k_{x}\Psi^{u}_j(-y')|\\
    &\leq&C\, (1+|u|)^n \frac{2^{jd_k}|2^jy-2^jy'|}{\left(\mu_k(B(2^jx,1))\,\mu_k(B(2^jy,1))\right)^{1/2}}\frac{1}{1+2^j |x-y|} \frac{1}{\left(1+2^jd_G(x,y)\right)^{n-1}}\nonumber\\
   &\leq& C\, (1+|u|)^n \frac{|y-y'|}{|x-y|}\frac{1}{\left(\mu_k(B(x,2^{-j}))\,\mu_k(B(y,2^{-j}))\right)^{1/2}} \frac{1}{\left(1+2^jd_G(x,y)\right)^{n-1}}\nonumber
\end{eqnarray}
Now, when $2^jd_G(x,y)\leq 1$, from (\ref{VOLRADREL}) and (\ref{V(x,y,d(x,y) comparison}) we get 
\begin{eqnarray*}
    \frac{1}{\mu_k(B(x,2^{-j}))}&\leq& C\, \big(2^jd_G(x, y)\big)^d \frac{1}{\mu_k(B(x,d_G(x, y)))}\\
    &\leq& C\, \big(2^jd_G(x, y)\big)^d \frac{1}{V_G(x, y, d_G(x, y))}\\
     &\leq& C\,  \frac{\big(2^jd_G(x, y)\big)^d +\big(2^jd_G(x, y)\big)^{d_k}}{V_G(x, y, d_G(x, y))}.
\end{eqnarray*}
Similarly, when $2^jd_G(x,y)> 1$, from (\ref{VOLRADREL}) and (\ref{V(x,y,d(x,y) comparison}) we get 
\begin{eqnarray*}
    \frac{1}{\mu_k(B(x,2^{-j}))}&\leq& C\,  \frac{\big(2^jd_G(x, y)\big)^d +\big(2^jd_G(x, y)\big)^{d_k}}{V_G(x, y, d_G(x, y))}.
\end{eqnarray*}
Thus, in any case
\begin{eqnarray*}
    \frac{1}{\mu_k(B(x,2^{-j}))}&\leq& C\,  \frac{\big(2^jd_G(x, y)\big)^d +\big(2^jd_G(x, y)\big)^{d_k}}{V_G(x, y, d_G(x, y))}.
\end{eqnarray*}
In similar manner we can deduce
\begin{eqnarray*}
    \frac{1}{\mu_k(B(y,2^{-j}))}&\leq& C\,  \frac{\big(2^jd_G(x, y)\big)^d +\big(2^jd_G(x, y)\big)^{d_k}}{V_G(x, y, d_G(x, y))}.
\end{eqnarray*}
Therefore, if $|2^jy-2^jy'|\leq 1$, using above two estimates, from (\ref{less than 1 case  1st estimate})
we write
\begin{eqnarray}\label{less than 1 case  Last estimate}
    &&|\tau^k_x\Psi^{u}_j(-y)-\tau^k_{x}\Psi^{u}_j(-y')|\\
    &\leq& C\, (1+|u|)^n \frac{|y-y'|}{|x-y|}\frac{\big(2^jd_G(x, y)\big)^d +\big(2^jd_G(x, y)\big)^{d_k}}{V_G(x, y, d_G(x, y))} \frac{1}{\left(1+2^jd_G(x,y)\right)^{n-1}}.\nonumber
\end{eqnarray}
\underline{If $|2^jy-2^jy'|> 1$:}\\\\
Again in view of (\ref{integral representation of the kernel}), applying \cite[eq.(4.30)]{HejnaRODTONRK} and using the inequalities (\ref{growth of u}),  (\ref{VOLRADREL}) and (\ref{V(x,y,d(x,y) comparison}) in similar manner as in the last case, we get
\begin{eqnarray}\label{greater than 1 case  1st estimate}
&&|\tau^k_x\Psi^{u}_j(-y)-\tau^k_{x}\Psi^{u}_j(-y')|\\
    &\leq&C\, (1+|u|)^n \Bigg[ \frac{2^{jd_k}}{\left(\mu_k(B(2^jx,1))\,\mu_k(B(2^jy,1))\right)^{1/2}}\frac{1}{1+2^j |x-y|} \frac{1}{\left(1+2^jd_G(x,y)\right)^{n-1}}\nonumber\\
    &&+ \frac{2^{jd_k}}{\left(\mu_k(B(2^jx,1))\,\mu_k(B(2^jy',1))\right)^{1/2}}\frac{1}{1+2^j |x-y'|} \frac{1}{\left(1+2^jd_G(x,y')\right)^{n-1}}\Bigg]\nonumber\\
     &\leq& C\, (1+|u|)^n \Bigg[\frac{\big(2^jd_G(x, y)\big)^d +\big(2^jd_G(x, y)\big)^{d_k}}{V_G(x, y, d_G(x, y))} \frac{1}{1+2^j |x-y|} \frac{1}{\left(1+2^jd_G(x,y)\right)^{n-1}}\nonumber\\
    && + \frac{\big(2^jd_G(x, y')\big)^d +\big(2^jd_G(x, y')\big)^{d_k}}{V_G(x, y', d_G(x, y'))} \frac{1}{1+2^j |x-y'|} \frac{1}{\left(1+2^jd_G(x,y')\right)^{n-1}}\Bigg]\nonumber
\end{eqnarray}
It is easy to see that the condition $|y-y'|\leq d_G(x,y)/2$ implies that 
$$d_G(x, y)\sim d_G(x, y'),\ |x-y|\sim |x-y'|\text{ and }V_G(x, y, d_G(x, y))\sim V_G(x, y', d_G(x, y')).$$
So if $|2^jy-2^jy'|> 1$, applying the above estimates in (\ref{greater than 1 case  1st estimate}) we write
\begin{eqnarray}\label{greater than 1 case  Last estimate}
&&|\tau^k_x\Psi^{u}_j(-y)-\tau^k_{x}\Psi^{u}_j(-y')|\\
&\leq& C\, (1+|u|)^n\frac{\big(2^jd_G(x, y)\big)^d +\big(2^jd_G(x, y)\big)^{d_k}}{V_G(x, y, d_G(x, y))} \frac{1}{1+2^j |x-y|} \frac{1}{\left(1+2^jd_G(x,y)\right)^{n-1}}\nonumber\\
&\leq& C\, (1+|u|)^n\frac{\big(2^jd_G(x, y)\big)^d +\big(2^jd_G(x, y)\big)^{d_k}}{V_G(x, y, d_G(x, y))} \frac{|2^jy-2^jy'|}{1+2^j |x-y|} \frac{1}{\left(1+2^jd_G(x,y)\right)^{n-1}}\nonumber\\
 &\leq& C\, (1+|u|)^n \frac{|y-y'|}{|x-y|}\frac{\big(2^jd_G(x, y)\big)^d +\big(2^jd_G(x, y)\big)^{d_k}}{V_G(x, y, d_G(x, y))} \frac{1}{\left(1+2^jd_G(x,y)\right)^{n-1}}.\nonumber
\end{eqnarray}
\par Now taking (\ref{less than 1 case  Last estimate}) and (\ref{greater than 1 case  Last estimate}) into account and using $|x-y|\geq d_G(x, y)$ together with the condition $n=\lfloor d_k \rfloor+2$, we have
\begin{eqnarray*}
   &&\|\tau^k_x\Psi^{u}_j(-y)-\tau^k_{x}\Psi^{u}_j(-y')\|_{\ell^2(\mathbb{Z})}\\
   &\leq& \sum\limits_{j\in\mathbb{Z}}|\tau^k_x\Psi^{u}_j(-y)-\tau^k_{x}\Psi^{u}_j(-y')|\\
    &\leq& \frac{C\, (1+|u|)^n}{V_G(x, y, d_G(x, y))} \frac{|y-y'|}{d_G(x,y)} \sum\limits_{j\in \mathbb{Z}}\frac{\big(2^jd_G(x, y)\big)^d +\big(2^jd_G(x, y)\big)^{d_k}}{\left(1+2^jd_G(x,y)\right)^{n-1}}.\\
   &\leq& \frac{C\, (1+|u|)^n}{V_G(x, y, d_G(x, y))} \frac{|y-y'|}{d_G(x,y)}\Bigg(\sum\limits_{j\in\mathbb{Z}:\,2^j d_G(x,y)\leq 1}\cdots +\sum\limits_{j\in\mathbb{Z}:\,2^j d_G(x,y)> 1}\cdots \Bigg)\\
   &\leq& \frac{C\, (1+|u|)^n}{V_G(x, y, d_G(x, y))} \frac{|y-y'|}{d_G(x,y)}\Bigg(\sum\limits_{j\in\mathbb{Z}:\,2^j d_G(x,y)\leq 1}\big(2^jd_G(x, y)\big)^d \\
   && +\sum\limits_{j\in\mathbb{Z}:\,2^j d_G(x,y)> 1}\frac{(2^jd_G(x, y))^{d_k}}{(2^jd_G(x,y))^{n-1}} \Bigg)\\
   &\leq& \frac{C\, (1+|u|)^n}{V_G(x, y, d_G(x, y))} \frac{|y-y'|}{d_G(x,y)}.
   \end{eqnarray*}
This completes the proof of (\ref{vectorvalued kernel1}) and hence the proof of the theorem.
\end{proof}
\begin{rem}
In \cite[Theorem 3.1]{AmriSIOIDS} the explicit constant for the boundedness of Banach-valued singular integrals is not calculated. However, a close observation (see \cite[Theorem 3.1]{AmriSIOIDS} and \cite[Theorem 1.1]{GrafakosVVSIAMFOSOHT} ) assures that the constant in our proof will vary as $(1+|u|)^{n}$.
\end{rem}
Let ${\bf m}$ be a bounded function on $\mathbb{R}^d.$ For any $t>0$ and for any $f\in \mathcal{S}(\mathbb{R}^d)$, we define a Dunkl-multiplier operator ${\bf m}_t(D_k)$ as
$${\bf m}_t(D_k)f(x)=\int_{\mathbb{R}^d}{\bf m}(t\xi)\mathcal{F}_kf(\xi)E_k(ix,\xi)\,d\mu_k(\xi).$$
 
 Then, the we have the following boundedness result.
 \begin{prop}\label{max function asso to multi}
   Let ${\bf m}$ be a function on $\mathbb{R}^d$  such that 
   $$|{\bf m}(x)|\leq C_{\bf m}/(1+|x|) \text{ and }|\nabla {\bf m}(x)|\leq C_{{\bf m}'}/(1+|x|) \text{ for all } x\in \mathbb{R}^d,$$
   where $\nabla$ is the usual gradient on $\mathbb{R}^d.$
    Then 
 $$\Big\|\sup\limits_{t>0}|{\bf m}_t(D_k)f|\Big\|_{L^2(d\mu_k)}\leq C(C_{\bf m}+C_{{\bf m}'}) ||f||_{L^2(d\mu_k)}$$
 \end{prop}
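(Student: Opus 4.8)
The plan is to estimate $\sup_{t>0}|{\bf m}_t(D_k)f|$ by isolating the value of ${\bf m}$ at the origin --- which will be absorbed into the Dunkl--Hardy--Littlewood maximal operator --- and by handling the remainder with a Littlewood--Paley decomposition together with a square function estimate in the dilation parameter. First I would fix, once and for all, a radial $\eta\in\mathcal{S}(\mathbb{R}^d)$ with $\eta(0)=1$ and write ${\bf m}={\bf m}(0)\,\eta+{\bf m}_1$, so that ${\bf m}_t(D_k)f={\bf m}(0)\,\eta_t(D_k)f+({\bf m}_1)_t(D_k)f$. Since $\eta$ is radial Schwartz, $\phi_0:=\mathcal{F}_k^{-1}\eta$ is radial Schwartz, hence dominated by a radial decreasing function in $L^1(\mathbb{R}^d,d\mu_k)$, and a direct Dunkl-transform computation gives $\eta_t(D_k)f=f*_k(\phi_0)_t$. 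By the standard domination of convolutions against such radial kernels by the Dunkl maximal operator $M_k$ and the $L^2$-boundedness of $M_k$ (see \cite{ThangaveluCOMF}), together with $|{\bf m}(0)|\le C_{\bf m}$, the first summand contributes at most $C\,C_{\bf m}\,\|f\|_{L^2(d\mu_k)}$. By construction ${\bf m}_1\in C^1(\mathbb{R}^d)$, ${\bf m}_1(0)=0$, and the hypotheses, combined with $\eta\in\mathcal{S}(\mathbb{R}^d)$, give, with $C_*:=C_{\bf m}+C_{{\bf m}'}$,
\[
|{\bf m}_1(x)|\le \frac{C\,C_*\,|x|}{1+|x|^2},\qquad |\nabla{\bf m}_1(x)|\le\frac{C\,C_*}{1+|x|}\qquad(x\in\mathbb{R}^d),
\]
the first estimate near the origin being a consequence of ${\bf m}_1(0)=0$ and the bound on $\nabla{\bf m}_1$.

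Next I would decompose ${\bf m}_1$ dyadically in frequency. Choose a radial $\psi\in C_c^\infty(\{1/2\le|\xi|\le2\})$ with $\sum_{j\in\mathbb{Z}}\psi(2^{-j}\xi)=1$ for $\xi\neq0$, and set ${\bf m}_1^{\,j}:=\psi(2^{-j}\cdot)\,{\bf m}_1$, which is supported in $\{2^{j-1}\le|\xi|\le2^{j+1}\}$. For $f\in\mathcal{S}(\mathbb{R}^d)$ the series $\sum_j({\bf m}_1^{\,j})_t(D_k)f$ is locally finite in the frequency variable and dominated, so $({\bf m}_1)_t(D_k)f=\sum_j({\bf m}_1^{\,j})_t(D_k)f$; by Minkowski's inequality it then suffices to estimate $\big\|\sup_{t>0}|({\bf m}_1^{\,j})_t(D_k)f|\big\|_{L^2(d\mu_k)}$ for each $j$ and sum over $j$. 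Writing $a_j:=\|{\bf m}_1^{\,j}\|_\infty$ and $\beta_j:=\sup_{\zeta\in\mathbb{R}^d}|\zeta|\,|\nabla{\bf m}_1^{\,j}(\zeta)|$, the bounds above yield, on $\operatorname{supp}{\bf m}_1^{\,j}$,
\[
a_j\le C\,C_*\,2^{-|j|},\qquad \beta_j\le C\,C_*\,\min(2^j,1),
\]
so that $\sum_{j\in\mathbb{Z}}(a_j\beta_j)^{1/2}\le C\,C_*$, the general term being $O(2^{j})$ as $j\to-\infty$ and $O(2^{-j/2})$ as $j\to+\infty$.

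For a single block I would apply the fundamental theorem of calculus in $t$. Fixing $j$ and setting $F_x(t):=({\bf m}_1^{\,j})_t(D_k)f(x)$, differentiation under the integral sign (legitimate since ${\bf m}_1^{\,j}\in C_c^1$) gives $\partial_t F_x(t)=\int_{\mathbb{R}^d}\xi\cdot(\nabla{\bf m}_1^{\,j})(t\xi)\,\mathcal{F}_kf(\xi)\,E_k(ix,\xi)\,d\mu_k(\xi)$; since ${\bf m}_1^{\,j}$ is supported in an annulus bounded away from $0$ and $\mathcal{F}_kf\in L^1(d\mu_k)$, we have $F_x(t)\to0$ as $t\to0^+$ and as $t\to\infty$, whence, for every $x$,
\[
\sup_{t>0}|F_x(t)|^2\le 2\Big(\int_0^\infty|F_x(t)|^2\,\frac{dt}{t}\Big)^{1/2}\Big(\int_0^\infty|t\,\partial_t F_x(t)|^2\,\frac{dt}{t}\Big)^{1/2}.
\]
Integrating in $x$, using the Cauchy--Schwarz inequality and then the Plancherel formula for $\mathcal{F}_k$ at each fixed $t$, and noting that for every $\xi\neq0$ the integrands $|{\bf m}_1^{\,j}(t\xi)|^2$ and $|t\xi\cdot(\nabla{\bf m}_1^{\,j})(t\xi)|^2$ vanish outside a $t$-interval of logarithmic length $\log 4$, one obtains
\[
\Big\|\sup_{t>0}|({\bf m}_1^{\,j})_t(D_k)f|\Big\|_{L^2(d\mu_k)}\le C\,(a_j\beta_j)^{1/2}\,\|f\|_{L^2(d\mu_k)}.
\]
Summing over $j$ and adding back the ${\bf m}(0)\,\eta$ contribution from the first step then finishes the proof.

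I expect the main obstacle to be that the bare square function argument cannot be applied to ${\bf m}$ directly: the $t$-integral $\int_0^\infty|{\bf m}_t(D_k)f(x)|^2\,dt/t$ diverges near $t=0$ unless ${\bf m}(0)=0$, and the companion integral diverges near $t=\infty$ because the hypotheses only give $|x\cdot\nabla{\bf m}(x)|\le C_{{\bf m}'}$, with no decay. The decomposition above is arranged precisely to repair both failures: subtracting ${\bf m}(0)\,\eta$ (a genuine radial Schwartz multiplier, hence controllable by $M_k$) reduces matters to ${\bf m}_1$, which vanishes at the origin, while the dyadic splitting makes every ${\bf m}_1^{\,j}$ compactly supported, so that $t\mapsto{\bf m}_1^{\,j}(t\xi)$ and its $t$-derivative are supported on a $t$-interval of bounded logarithmic length and both $dt/t$-integrals converge; the cost is the series in $j$, whose convergence is exactly what the $\min$-type bounds on $a_j$ and $\beta_j$ secure. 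Verifying those two bounds, and the $L^2$ maximal bound for radial convolutions, are the steps I expect to demand the most care.
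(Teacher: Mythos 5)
Your proof is correct and is essentially the paper's own argument: the paper's proof just defers to the classical scheme of Rubio de Francia (pp.\ 397--398) with Dunkl objects in place of the classical ones, which is precisely what you carry out --- subtract ${\bf m}(0)\eta$ and control that piece by the maximal function for radial Dunkl approximate identities, then decompose the remainder into dyadic frequency blocks and bound each block by the fundamental-theorem-of-calculus/Cauchy--Schwarz square-function estimate with the geometric mean $(a_j\beta_j)^{1/2}$, finally summing in $j$. You merely make explicit the Dunkl-specific ingredients (radiality of $\mathcal{F}_k^{-1}\eta$ and the Thangavelu--Xu domination by the Dunkl maximal operator) that the paper leaves implicit.
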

 \begin{proof}
      The proof follows by repeating the proof in the classical case \cite[pp. 397-398]{RubiodefranciaMFAFT} with the classical objects replaced by their Dunkl-counterparts.
 \end{proof}
The next main result of this section is as follows.
\begin{thm}\label{Littlewood-Plaey l infinty thm}
    Let $u\in \mathbb{R}^d, 1<p<\infty$. Let $\psi$ be a compactly supported smooth function on $\mathbb{R}^d$. For $j\in \mathbb{Z}$, define $\psi_j(\xi)=\psi (\xi/2^j)$ and for $f\in \mathcal{S}(\mathbb{R}^d)$ define
$$\psi(u,D_k/2^j)f(x)=\int_{\mathbb{R}^d}\psi _j(\xi)\,e^{i\langle u,\, \xi\rangle /2^j}\mathcal{F}_kf(\xi) E_k(ix,\xi)\,d\mu_k(\xi).$$ Then 
$$ \Big\|\sup\limits_{j\in\mathbb{Z}}|\psi(u,D_k/2^j)f|\Big\|_{L^p(d\mu_k)}\leq C\,(|1+|u|)^{n} ||f||_{L^p(d\mu_k)},$$
where $n=\lfloor d_k \rfloor+2$ and $C$ is independent of $u$.
\end{thm}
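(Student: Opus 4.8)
The plan is to reduce Theorem \ref{Littlewood-Plaey l infinty thm} to the two results already in hand: the $\ell^2$-valued Littlewood–Paley estimate of Theorem \ref{Littlewood-Paley l2 thm} and the maximal estimate of Proposition \ref{max function asso to multi}. The key observation is that for a compactly supported smooth $\psi$, the supremum $\sup_{j}|\psi(u,D_k/2^j)f|$ can be dominated by combining a single scale-localized piece (handled by the square function, since $\sup_j a_j \le (\sum_j |a_j|^2)^{1/2}$) together with a dyadic maximal-type operator coming from the low-frequency tail. Concretely, write $\psi = \psi - \psi(\,\cdot\,/2)\,\chi + \text{(telescoping remainder)}$; more efficiently, split $\psi = \phi_0 + \sum_{\ell \ge 1} \phi_\ell$ into a Littlewood–Paley-type decomposition where each $\phi_\ell$ is supported in an annulus $\{|\xi| \sim 2^\ell\}$ (apart from $\phi_0$ supported near the origin) and $\|\partial^\alpha \phi_\ell\|_\infty$ decays suitably in $\ell$ by the smoothness of $\psi$.

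First I would handle the annular pieces. For each fixed $\ell \ge 1$, the function $\phi_\ell(\xi/2^j)e^{i\langle u,\xi\rangle/2^j}$ is supported in an annulus $\{1/r_\ell \le |\xi| \le r_\ell\}$, so Theorem \ref{Littlewood-Paley l2 thm} applies directly and gives
$$
\Big\|\sup_{j\in\mathbb{Z}}|\phi_\ell(u,D_k/2^j)f|\Big\|_{L^p(d\mu_k)}
\le \Big\|\Big(\sum_{j\in\mathbb{Z}}|\phi_\ell(u,D_k/2^j)f|^2\Big)^{1/2}\Big\|_{L^p(d\mu_k)}
\le C\,(1+|u|)^{n}\,\|\partial^{\le n}\phi_\ell\|_\infty\,\|f\|_{L^p(d\mu_k)},
$$
where one tracks the $r$-dependence of the constant in Theorem \ref{Littlewood-Paley l2 thm} carefully (it enters only through the cardinality of the overlapping $j$'s, i.e.\ through $\log r_\ell \sim \ell$, so the growth in $\ell$ is at most polynomial). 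Choosing the decomposition so that $\|\partial^{\le n}\phi_\ell\|_\infty \lesssim 2^{-\ell(n+2)}$ (possible since $\psi$ is $C^\infty$ with compact support, hence its pieces are as smooth and small as we like), the series $\sum_\ell \ell^{C} 2^{-\ell(n+2)}$ converges and contributes $C(1+|u|)^n\|f\|_{L^p(d\mu_k)}$.

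It remains to treat the low-frequency piece $\phi_0(u,D_k/2^j)f$, where $\phi_0$ is smooth and supported in a fixed ball. Here the scales $j$ no longer separate and the square function is not available; instead I would invoke Proposition \ref{max function asso to multi}. The function ${\bf m}(\xi) := \phi_0(\xi)e^{i\langle u,\xi\rangle}$ satisfies $|{\bf m}(\xi)| \le C_{\phi_0}/(1+|\xi|)$ trivially (it is compactly supported and bounded) and $|\nabla {\bf m}(\xi)| \le C(1+|u|)/(1+|\xi|)$ — the factor $1+|u|$ coming from differentiating the exponential, exactly as in \eqref{growth of u}. Since $\phi_0(u,D_k/2^j)f = {\bf m}_{2^{-j}}(D_k)f$ in the notation preceding Proposition \ref{max function asso to multi}, that proposition yields the $L^2$ bound $\|\sup_{j}|\phi_0(u,D_k/2^j)f|\|_{L^2(d\mu_k)} \le C(1+|u|)\|f\|_{L^2(d\mu_k)}$; and to get the full range $1<p<\infty$ I would either note that the underlying operator is a Banach ($L^\infty(dt)$ or $\ell^\infty(\mathbb{Z})$)-valued Calderón–Zygmund operator in the Dunkl setting — its kernel bounds following from the same pointwise multiplier estimates of Dziubański–Hejna \cite{HejnaRODTONRK} used in the proof of Theorem \ref{Littlewood-Paley l2 thm}, now with $\ell^2$ replaced by $\ell^\infty$, which only makes the kernel estimates easier — and apply \cite[Theorem 3.1]{AmriSIOIDS} to extrapolate from $p=2$ to all $1<p<\infty$, or directly repeat the Banach-valued singular integral argument. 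Summing the low-frequency and annular contributions gives the claimed inequality with constant $C(1+|u|)^n$, $n = \lfloor d_k\rfloor + 2$.

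**Main obstacle.** The delicate point is the low-frequency piece: the square function genuinely fails there (the bump does not vanish at the origin), so one must produce a maximal estimate valid for all $1<p<\infty$ with controlled dependence on $u$. Proposition \ref{max function asso to multi} only gives $p=2$; upgrading to general $p$ requires verifying that $\sup_j |{\bf m}_{2^{-j}}(D_k)f|$ fits the Banach-valued Calderón–Zygmund framework of \cite[Theorem 3.1]{AmriSIOIDS}, i.e.\ establishing the Hörmander-type kernel smoothness for the $\ell^\infty(\mathbb{Z})$-valued kernel $\big(\tau^k_x ({\bf m}_{2^{-j}})^{\vee}(-y)\big)_{j}$ uniformly in $u$ up to the factor $(1+|u|)^n$ — which proceeds exactly along the lines of the estimates \eqref{vectorvalued kernel1}–\eqref{greater than 1 case Last estimate} already carried out in the proof of Theorem \ref{Littlewood-Paley l2 thm}, using \cite[eq.(4.30)–(4.31)]{HejnaRODTONRK}, since the $\ell^\infty$ norm is bounded by the $\ell^2$ (indeed $\ell^1$) norm estimated there. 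A careful bookkeeping of how the annulus radius $r_\ell$ enters the constant of Theorem \ref{Littlewood-Paley l2 thm} is the other point needing attention, but it is routine: the only $r$-dependence is the finite-overlap count, which is $O(\log r)$.
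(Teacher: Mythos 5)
Your core mechanism is the same as the paper's, but you wrap it in a decomposition the paper shows to be unnecessary. The paper makes no splitting of $\psi$ at all: since $\psi$ is smooth and compactly supported, the full symbol ${\bf m}(\xi)=\psi(\xi)e^{i\langle u,\xi\rangle}$ already satisfies $|{\bf m}(\xi)|\leq C/(1+|\xi|)$ and $|\nabla{\bf m}(\xi)|\leq C(1+|u|)/(1+|\xi|)$, so Proposition \ref{max function asso to multi} gives the $L^2$ bound for $\sup_j|\psi(u,D_k/2^j)f|$ directly (exactly your treatment of $\phi_0$, applied to all of $\psi$), and the passage to $1<p<\infty$ is done, as you propose, by the Banach-valued singular integral theorem of \cite{AmriSIOIDS} with the $\ell^\infty(\mathbb{Z})$-valued kernel, whose H\"ormander-type estimate (\ref{vectorvalued kernel1 for l infinity}) follows because $\sup_j$ is dominated by the sum over $j$ already controlled in the proof of Theorem \ref{Littlewood-Paley l2 thm}. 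So the genuinely load-bearing part of your proposal coincides with the paper; the annular pieces $\phi_\ell$ and the square-function step buy you nothing, and the justification you give for summing them is shaky: smoothness and compact support of $\psi$ do \emph{not} let you arrange $\|\partial^{\alpha}\phi_\ell\|_\infty\lesssim 2^{-\ell(n+2)}$ in general, and if your annuli shrink toward the origin the pieces do not decay at all (there $\psi\approx\psi(0)$) while rescaled derivative norms grow, so an infinite sum of uniform contributions $C(1+|u|)^n\|f\|_{L^p(d\mu_k)}$ would diverge. This is rescued only by the fact that finitely many annuli between the support of $\phi_0$ and the (compact) support of $\psi$ are nonzero — at which point a single annular piece, or no decomposition whatsoever, suffices. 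I would also not lean on the claim that the constant in Theorem \ref{Littlewood-Paley l2 thm} depends on $r$ only through a finite-overlap count: it also enters through $\sup_{|\alpha|\leq n}\|\partial^\alpha\psi\|_{L^\infty}$ and the kernel estimates; since only finitely many fixed pieces occur, this bookkeeping is anyway not needed.
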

\begin{proof}
The proof follows in the same scheme as the proof of Theorem \ref{Littlewood-Paley l2 thm}. We will only provide a outline of the proof.
\par Since $\psi$ is a smooth function with compact support, $\partial _j \psi$ is also so and $\partial _j e^{i\langle u,\, \xi\rangle }=iu_j e^{i\langle u,\, \xi\rangle }$ for any $1\leq j \leq d.$ Therefore, the following estimates hold for all $\xi \in \mathbb{R}^d$:
$$\left|\psi (\xi)e^{i\langle u,\, \xi\rangle }\right|\leq \frac{C}{1+|\xi|} \text{ and }\left|\nabla \big(\psi (\xi)e^{i\langle u,\, \xi\rangle }\big)\right|\leq \frac{C(1+|u|)}{1+|\xi|},$$
where $C$ does not depend on $u.$
\par Hence by Proposition \ref{max function asso to multi},
$$\Big\|\sup\limits_{j\in\mathbb{Z}}|\psi(u,D_k/2^j)f|\Big\|_{L^2(d\mu_k)}\leq C(1+|u|) ||f||_{L^2(d\mu_k)}.$$
\par Let $\Psi^u$ be as in the proof of Theorem \ref{Littlewood-Paley l2 thm}. Thus, to complete the proof we only need to prove that  for $x,y,y'\in \mathbb{R}^d$ with $|y-y'|\leq d_G(x,y)/2$,
\begin{equation}\label{vectorvalued kernel1 for l infinity}
\sup\limits_{j\in \mathbb{Z}}|\tau^k_x\Psi^{u}_j(-y)-\tau^k_{x}\Psi^{u}_j(-y')|\leq\frac{ C\,(|1+|u|)^{n}}{V_G\left(x,y,d_G(x,y)\right)}\,\frac{|y-y'|}{d_G(x,y)},
\end{equation}
which follows by repeating the arguments used in the proof of Theorem \ref{Littlewood-Paley l2 thm}.
\end{proof}

\section{Spaces of Homogeneous Type and Muckenhoupt Weights}\label{homogeneous space section}
In this section, we will state some well known results on  homogeneous spaces in the sense of Coifman and Weiss \cite{Coifmanbook} and on Muckenhoupt weights.
\begin{defn}\label{defn space of homogeneous type}
A \emph{space of homogeneous type} $(X, \rho,d\mu)$ is a topological space equipped with a quasi metric $\rho$ and a Borel measure $d\mu $ such that
 \begin{enumerate}[label=(\roman*)]
\item  $\rho$ is continuous on $X\times X$ and the balls $B_{\rho}(x,r)):=\{ y\in X: \rho (x,y)<r \}$ are open in $X$;
\item  the measure $\mu$ fulfills the doubling condition :
$$\mu(B_{\rho}(x,2r))\leq C \mu(B_{\rho}(x,r)),\ \forall x\in X,\ \forall r>0;$$
\item  $0<\mu(B_{\rho}(x,r))<\infty$ for every $x\in X$ and $r>0$.
\end{enumerate}
\end{defn}
It turns out that the measure $\mu_k$ is a Borel measure on $\mathbb{R}^d$ and also (\ref{Volofball}) implies that $\mu_k$ satisfies the doubling condition. Thus, $(\mathbb{R}^d,| x-y |,d\mu_k) $ is a space of homogeneous type.
\par For any $m\in \mathbb{N}$ and given any $\overrightarrow {f}=(f_1,f_2,\cdots,f_m)$, where each $f_j$ is locally integrable on $(X, \rho,d\mu)$, the \emph{multilinear Hardy-Littlewood maximal function} $\mathcal{M}^X_{HL}$ is defined as
$$\mathcal{M}^X_{HL}\overrightarrow {f}(x)=\sup_{ \underset{x\in B_{\rho}}{ B_{\rho}\subset X}}\prod\limits^m_{j=1}   \frac{1}{ \mu(B_{\rho})} \int_{B_{\rho}}|f_j(y)|\,d\mu(y) ,$$
where supremum is taken over all balls $B_{\rho}$ in $X$ which contains $x$. 
\par  On the space $(\mathbb{R}^d,| x-y |,d\mu_k)$, we will use  $\mathcal{M}^k_{HL}$ to denote the multilinear Hardy-Littlewood maximal function, i.e., 
$$\mathcal{M}^k_{HL}\overrightarrow {f}(x)=\sup_{ \underset{x\in B}{ B\subset \mathbb{R}^d}}\prod\limits^m_{j=1}   \frac{1}{ \mu_k(B)} \int_{B}|f_j(y)|\,d\mu_k(y)$$
and for the case $m=1$, we will use the notation 
$$M^k_{HL}f(x)=\sup_{ \underset{x\in B}{ B\subset \mathbb{R}^d}}   \frac{1}{ \mu_k(B)} \int_B|f(y)|\,d\mu_k(y).$$ 
It is known in the literature that $M^k_{HL}$ is bounded on $L^p(\mathbb{R}^d, d\mu_k)$ for $1<p<\infty.$ For any locally integrable function $f$, we also define the \emph{sharp maximal function} $ M^{k,\,\#}_{HL}$, given by
$$M^{k,\,\#}_{HL}f(x)=\sup_{ \underset{x\in B}{ B\subset \mathbb{R}^d}}   \frac{1}{ \mu_k(B)} \int_B|f(y)-f_B|\,d\mu_k(y),$$
where $$f_B=\frac{1}{ \mu_k(B)} \int_B f(y)\,d\mu_k(y).$$
For $\epsilon>0$ set 
$$ M^{k,\,\#}_{HL,\,\epsilon}\,f(x)=\big(M^{k,\,\#}_{HL}(|f|^\epsilon)(x)\big)^{1/\epsilon}.$$
Observe that  for $\epsilon, a, b>0$, the inequalities
$$\min\{1, 2^{\epsilon -1}\}(a^\epsilon +b^\epsilon)\leq (a+b)^\epsilon \leq \max\{1, 2^{\epsilon -1}\}(a^\epsilon +b^\epsilon)$$
yield 
$$M^{k,\,\#}_{HL,\,\epsilon}\,f(x)\sim\sup_{ \underset{x\in B}{ B\subset \mathbb{R}^d}}   \inf\limits_{c\in\,\mathbb{C}}\Big[\frac{1}{ \mu_k(B)} \int_B\big|\,|f(y)|^\epsilon-|c|^\epsilon\big|\,d\mu_k(y)\Big]^{1/\epsilon}.$$
\par A non negative function $w$ on $X$ is called a \emph{weight} if it is locally integrable. Next, we give the definitions of some well-known weight classes.
\begin{defn}
Let $1\leq p<\infty$ and $w$ be a weight. The weight $w$ is said to belong to the class $A_p(X, \rho,d\mu)$, if it satisfies
$$\sup_{B_{\rho}\subset X }\Big(  \frac{1}{ \mu(B_{\rho})} \int_{B_{\rho}} w(y) d\mu(y) \Big)\Big(  \frac{1}{ \mu(B_{\rho})} \int\limits_{B_{\rho}}w(y)^{1-p'}d\mu(y) \Big)^{p-1} < \infty,$$
when $p=1$, $\Big(  \frac{1}{ \mu(B_{\rho})} \int_{B_{\rho}}w(y)^{1-p'}d\mu(y) \Big)^{p-1}$ is understood as $\left(\inf\limits_{B_{\rho}}w\right)^{-1}.$
\end{defn}
Set $A_{\infty}(X, \rho,d\mu):=\bigcup\limits_{1\leq p< \infty} A_p(X, \rho,d\mu).$
\begin{defn}
Let $1\leq p_1,p_2,\cdots,p_m<\infty$, $\overrightarrow P=(p_1,p_2,\cdots,p_m)$ and $p$ be the number given by $1/p=1/p_1+1/p_2+\cdots +1/p_m$. Furthermore, let $v, w_1, w_2,\cdots,w_m$ be weights and $\overrightarrow w =(w_1,w_2,\cdots ,w_m)$. We say that the vector weight $(v,\overrightarrow w)$ is in the class $A_{\overrightarrow P}(X, \rho,d\mu)$, if it satisfies
$$\sup_{B_{\rho}\subset X }\Big(  \frac{1}{ \mu(B_{\rho})} \int_{B_{\rho}}v(y) d\mu(y) \Big)^{{1}/{p}}\, {\prod\limits^m_{j=1}} \Big(  \frac{1}{ \mu(B_{\rho})} \int_{B_{\rho}}w_j(y)^{1-p_j'}d\mu(y) \Big)^{{1}/{p_j'}} < \infty,$$ 
when $p_j=1$, $\Big(  \frac{1}{ \mu(B_{\rho})} \int_{B_{\rho}}w_j(y)^{1-p_j'}d\mu(y) \Big)^{{1}/{p_j'}}$ is understood as $\left(\inf\limits_{B_{\rho}}w_j\right)^{-1}$.
\end{defn}
In particular when $v=\prod\limits_{j=1}^m w_j^{p/p_j}$, we will simply say that $\overrightarrow w$ is in the class $A_{\overrightarrow P}(X, \rho,d\mu)$.
 \par For the homogeneous space $(\mathbb{R}^d,| x-y |,d\mu_k) $, we will use the notations $A^k_{\infty}$ and $A^k_{\overrightarrow P}$ instead of $A_{\infty}(\mathbb{R}^d,| x-y |,d\mu_k)$ and $A_{\overrightarrow P}(\mathbb{R}^d,| x-y |,d\mu_k)$ respectively.  In our main theorems, we will restrict ourselves to $G$-invariant weights only, i.e., weights $w$ in $\mathbb{R}^d$ which satisfies $w(\sigma (x))=w(x),\ \forall x\in \mathbb{R}^d$ and $\forall \sigma \in G$. In fact $G$-invariance of the Dunkl measure motivates us to use weights which are $G$-invariant. As in \cite{DaiLRTAMBROFTDT, SumanWIFMFOIDS}, using such weights, we will take the benefit of the fact that for a reasonable function $f$ on $\mathbb{R}^d$, for any $\sigma \in G$,
$$\int_{\mathbb{R}^d}f\circ \sigma(x)w(x)\,d\mu_k(x)=\int_{\mathbb{R}^d}f(x)w(x)\,d\mu_k(x).$$
\par  We end this section by stating two theorems which follow from well known results for general space of homogeneous type \cite[Theorem 4.4, Theorem 4.6 and Theorem 4.7]{GrafkosMAMS}.
\begin{thm}\label{multi maximal funct two weight}
  Let $1\leq p_1,p_2,\cdots,p_m<\infty$, $\overrightarrow P=(p_1,p_2,\cdots,p_m)$, $p$ be the number given by $1/p=1/p_1+1/p_2+\cdots +1/p_m$ and $v, w_1, w_2,\cdots,w_m$ be weights. Then the following hold:
  \begin{enumerate}[label=(\roman*)]
\item\label{multi maximal two weigh weak thm}  if $p_j= 1 $ for some $1\leq j \leq m$ and 
the vector weight $(v,\overrightarrow w)\in A_{\overrightarrow P}^k$, then for all $\overrightarrow {f} \in L^{p_1}(\mathbb{R}^d,w_1\,d\mu_k)\times L^{p_2}(\mathbb{R}^d,w_2\,d\mu_k)\times\cdots \times L^{p_m}(\mathbb{R}^d,w_m\, d\mu_k)$, the following boundedness holds:
        $$\sup\limits_{t>0}\,t\Big(\int\limits_{ \{y\,\in{\mathbb{R}^d:\,{M}_{HL} ^k \overrightarrow {f} (y)>t\}}}v(x)\,d\mu_k(x)\Big)^{{1}/{p}}\leq C\  {\prod\limits^m_{j=1}} \Big(   \int_{\mathbb{R}^d}|f_j(x)|^{p_j}w_j(x)d\mu_k(x) \Big)^{{1}/{p_j}};$$
\item \label{multi maximal two weight strong thm} if $p_j> 1 $ for all $1\leq j \leq m$ and 
for some $t>1$ the vector weight $(v,\overrightarrow w)$ 
satisfies the bump condition
\begin{equation}\label{bump condt multi ap weights}
  \sup_{B\subset \mathbb{R}^d }\Big(  \frac{1}{ \mu_k(B)} \int_{B}v(y) d\mu_k(y) \Big)^{{1}/{p}} {\prod\limits^m_{j=1}} \Big(  \frac{1}{ \mu_k(B)} \int_{B}w_j(y)^{-tp_j'/p_j}d\mu_k(y) \Big)^{{1}/{tp_j'}} < \infty,
\end{equation}
then for all $\overrightarrow {f} \in L^{p_1}(\mathbb{R}^d,w_1\,d\mu_k)\times L^{p_2}(\mathbb{R}^d,w_2\,d\mu_k)\times\cdots \times L^{p_m}(\mathbb{R}^d,w_m\, d\mu_k)$, the following boundedness holds:
        $$\Big(\int_{\mathbb{R}^d}\big(\mathcal{M}_{HL} ^k \overrightarrow {f} (x)\big)^p v(x)\, d\mu_k(x) \Big)^{{1}/{p}}\leq C\  {\prod\limits^m_{j=1}} \Big(   \int_{\mathbb{R}^d}|f_j(x)|^{p_j}w_j(x)\,d\mu_k(x) \Big)^{{1}/{p_j}}.$$

\end{enumerate}
\end{thm}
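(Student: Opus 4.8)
The plan is to deduce both parts by specializing to the space of homogeneous type $(\mathbb{R}^d,|x-y|,d\mu_k)$ the corresponding abstract two-weight bounds for the multilinear Hardy--Littlewood maximal operator, namely \cite[Theorem 4.4, Theorem 4.6 and Theorem 4.7]{GrafkosMAMS}. The first step I would take is to record that this specialization is legitimate: by the volume estimate (\ref{Volofball}) the measure $\mu_k$ is a doubling Borel measure, so $(\mathbb{R}^d,|x-y|,d\mu_k)$ is a space of homogeneous type in the sense of Definition \ref{defn space of homogeneous type}; moreover here the quasi-metric is the honest Euclidean metric, so all balls are open and $\mu_k$-measurable, and the operator $\mathcal{M}^k_{HL}$, the weight classes $A^k_{\overrightarrow P}$, and the power-bump condition (\ref{bump condt multi ap weights}) are precisely the specializations to this space of the abstract objects used in \cite{GrafkosMAMS}.

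Next I would treat part \ref{multi maximal two weigh weak thm}. When $p_j=1$ for some $1\le j\le m$ and $(v,\overrightarrow w)\in A^k_{\overrightarrow P}$, the relevant weak-type statement among \cite[Theorem 4.4 and Theorem 4.6]{GrafkosMAMS}, applied with $X=(\mathbb{R}^d,|x-y|,d\mu_k)$, says exactly that the multilinear maximal function is bounded from $L^{p_1}(w_1\,d\mu_k)\times\cdots\times L^{p_m}(w_m\,d\mu_k)$ into $L^{p,\infty}(v\,d\mu_k)$ with the constant displayed, which is the asserted inequality. For part \ref{multi maximal two weight strong thm}, when all $p_j>1$ and $(v,\overrightarrow w)$ satisfies the bumped condition (\ref{bump condt multi ap weights}) for some $t>1$ --- the multilinear analogue of a bumped $A_p$ (Neugebauer--P\'erez type) condition --- I would invoke \cite[Theorem 4.7]{GrafkosMAMS} on the same space, which yields the strong two-weight bound for $\mathcal{M}^X_{HL}$ with weight $v$; read back in the Dunkl setting this is the claimed strong-type estimate.

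Since the whole argument is a transfer principle, I do not expect a genuine obstacle; the only step that requires attention is the routine bookkeeping that the hypotheses of \cite{GrafkosMAMS} are met verbatim --- that $\mu_k$ is Borel and doubling (which is (\ref{Volofball})), that any Borel-regularity or geometric assumptions imposed there are satisfied on the metric space $\mathbb{R}^d$, and that the normalizations of the maximal operator and of the weight classes in \cite{GrafkosMAMS} coincide with the ones fixed in Section \ref{homogeneous space section}. Once this matching is checked, nothing further remains.
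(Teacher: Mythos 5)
Your proposal is exactly the paper's approach: the paper does not give an independent argument but states that Theorem \ref{multi maximal funct two weight} follows from the known two-weight results for the multilinear maximal function on general spaces of homogeneous type \cite[Theorem 4.4, Theorem 4.6 and Theorem 4.7]{GrafkosMAMS}, applied to $(\mathbb{R}^d,|x-y|,d\mu_k)$, whose doubling property is guaranteed by (\ref{Volofball}). Your verification of the transfer hypotheses is the same routine bookkeeping the paper leaves implicit, so nothing further is needed.
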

\begin{thm}\label{multi maximal funct one weight}
 Let $1< p_1,p_2,\cdots,p_m<\infty$, $\overrightarrow P=(p_1,p_2,\cdots,p_m)$, $p$ be the number given by $1/p=1/p_1+1/p_2+\cdots +1/p_m$ and $w_1, w_2,\cdots,w_m$ be weights such that the vector weight $\overrightarrow w\in A_{\overrightarrow P}^k$; then for all $\overrightarrow{f} \in L^{p_1}(\mathbb{R}^d,w_1\,d\mu_k)\times L^{p_2}(\mathbb{R}^d,w_2\,d\mu_k)\times\cdots \times L^{p_m}(\mathbb{R}^d,w_m\, d\mu_k)$, the following boundedness holds:
         $$\Big(\int_{\mathbb{R}^d}\big(\mathcal{M}_{HL} ^k \overrightarrow {f} (x)\big)^p \prod\limits_{j=1}^m w_j(x)^{p/p_j}\, d\mu_k(x) \Big)^{{1}/{p}}\leq C\  {\prod\limits^m_{j=1}} \Big(   \int_{\mathbb{R}^d}|f_j(x)|^{p_j}w_j(x)\,d\mu_k(x) \Big)^{{1}/{p_j}}.$$
\end{thm}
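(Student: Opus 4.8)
The plan is to view this as a direct specialisation to $(\mathbb{R}^d,|x-y|,d\mu_k)$ of the one-weight bound for the multilinear Hardy--Littlewood maximal operator on an arbitrary space of homogeneous type. Since we have already observed that $(\mathbb{R}^d,|x-y|,d\mu_k)$ is a space of homogeneous type, that $\mathcal{M}^k_{HL}$ is exactly its multilinear Hardy--Littlewood maximal operator, and that $A^k_{\overrightarrow P}$ is the abstract class $A_{\overrightarrow P}(\mathbb{R}^d,|x-y|,d\mu_k)$, the statement is literally \cite[Theorem 4.7]{GrafkosMAMS}, so one short paragraph invoking that result suffices. Below I describe how I would alternatively deduce it from the two-weight Theorem~\ref{multi maximal funct two weight} by a self-improvement argument, which keeps the proof essentially self-contained.

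Put $v=\prod_{j=1}^m w_j^{p/p_j}$. Since all $p_j>1$ here, the $p_j'$ are finite, and by the strong-type part of Theorem~\ref{multi maximal funct two weight} it is enough to check that $\overrightarrow w\in A^k_{\overrightarrow P}$ implies that $(v,\overrightarrow w)$ satisfies the bump condition \eqref{bump condt multi ap weights} for some $t>1$. Writing $\sigma_j:=w_j^{1-p_j'}$ and using the identity $1-p_j'=-p_j'/p_j$, the bump quantity differs from the $A^k_{\overrightarrow P}$ quantity only in that each factor $\big(\mu_k(B)^{-1}\int_B\sigma_j\,d\mu_k\big)^{1/p_j'}$ is replaced by $\big(\mu_k(B)^{-1}\int_B\sigma_j^{\,t}\,d\mu_k\big)^{1/(tp_j')}$.

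So the heart of the argument is the self-improvement $\big(\mu_k(B)^{-1}\int_B\sigma_j^{\,t}\,d\mu_k\big)^{1/t}\le C\,\mu_k(B)^{-1}\int_B\sigma_j\,d\mu_k$ for all balls $B$. For this I would invoke the standard description of the multilinear Muckenhoupt class on a space of homogeneous type (see \cite{LernerNMF} and \cite{GrafkosMAMS}): $\overrightarrow w\in A^k_{\overrightarrow P}$ if and only if $v\in A^k_{mp}$ and $\sigma_j\in A^k_{mp_j'}$ for every $j=1,\dots,m$. In particular $\sigma_j\in A^k_\infty$, so the reverse H\"older inequality for $A_\infty$ weights on a space of homogeneous type provides $t_j>1$ and $C_j$ for which the self-improvement holds for $\sigma_j$. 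Taking $t=\min_j t_j>1$, raising each such inequality to the power $1/p_j'$, multiplying over $j$ and by $\big(\mu_k(B)^{-1}\int_B v\,d\mu_k\big)^{1/p}$, and then taking the supremum over all balls $B$, one bounds the bump quantity of $(v,\overrightarrow w)$ by a constant times the $A^k_{\overrightarrow P}$ constant of $\overrightarrow w$, which is finite. This is precisely \eqref{bump condt multi ap weights}, and Theorem~\ref{multi maximal funct two weight} then yields exactly the asserted inequality.

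I do not anticipate a genuine obstacle here: the two ingredients used, namely the equivalence describing $A_{\overrightarrow P}$ through the conditions on $v$ and the $\sigma_j$, and the reverse H\"older inequality, are classical facts of Muckenhoupt--Coifman--Fefferman theory that hold verbatim on spaces of homogeneous type, and everything else is bookkeeping with H\"older's inequality. If one is willing to quote \cite[Theorem 4.7]{GrafkosMAMS} outright, nothing further is required; the self-improvement route above is merely the explicit version of that reduction.
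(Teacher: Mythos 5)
Your primary route is exactly what the paper does: Theorem~\ref{multi maximal funct one weight} is stated there as an immediate consequence of the known results for general spaces of homogeneous type (\cite[Theorem 4.7]{GrafkosMAMS}), applied to $(\mathbb{R}^d,|x-y|,d\mu_k)$, so invoking that result is correct and sufficient. Your alternative self-improvement argument --- deducing the bump condition \eqref{bump condt multi ap weights} from $\overrightarrow w\in A^k_{\overrightarrow P}$ via the characterization through $v$ and $\sigma_j=w_j^{1-p_j'}$ and the reverse H\"older inequality for $A_\infty$ weights on a space of homogeneous type, then applying Theorem~\ref{multi maximal funct two weight} --- is also sound (note $w_j^{-tp_j'/p_j}=\sigma_j^t$ and that $t=\min_j t_j$ works by H\"older), though it relies on the same quoted machinery and is not needed.
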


\section{Statements of Main Theorems}\label{statement main theorem}
We start this section by extending the Dunkl theory to a multilinear set up.
Let $R$ be the root system and $k$ be the multiplicity function as in Section \ref{prel}. Then
$$R^m:=(R\times(0)_{m-1})\cup \big((0)_1\times R\times(0)_{m-2}\big)\cup \cdots \cup \big((0)_{m-1}\times R\big),$$ where $(0)_j=\{(0,0,\cdots,0)\}\subset (\mathbb{R}^d)^j$, defines a root system in $\big(\mathbb{R}^d \big)^m$. The reflection group acting on $(\mathbb{R}^{d})^m$ is isomorphic to the $m$-fold product $G\times G\times\cdots \times G$.
Let $k^m : R^m\rightarrow \mathbb{C}$ be defined by $$k^m((0,0,\cdots,\lambda ,\cdots,0))=k(\lambda) \text{ for any } \lambda \in R.$$
Then it follows that $k^m$ is a non-negative normalized multiplicity function on $R^m$.
Due to this choice of the Root system and multiplicity function, Dunkl objects on $(\mathbb{R}^{d})^m$ splits into product of the corresponding objects in $\mathbb{R}^d$.
In fact, using the notations as described in Section \ref{prel}, it follows that for any $x_1,y_1,x_2,y_2,\cdots,x_m,y_m \in \mathbb{R}^d$, 
 we have $$d\mu_{k^m}\big((x_1,x_2,\cdots,x_m)\big)=d\mu_k(x_1)d\mu_k(x_2)\cdots d\mu_k(x_m).$$
Again the structures of the root system and the multiplicity function allow us to write
$$E_{k^m}\big((x_1,x_2,\cdots,x_m),(y_1,y_2,\cdots ,y_m)\big)=E_k(x_1,y_1)E_k(x_2,y_2) \cdots E_k(x_m,y_m).$$ 
This at once implies that for reasonable functions $f_1,f_2,\cdots,f_m$;
$$\mathcal{F}_{k^m}\left(f_1 \otimes f_2 \otimes \cdots \otimes f_m\right)\big((x_1,x_2,\cdots, x_m)\big)=\mathcal{F}_kf_1(x_1)\mathcal{F}_kf_2(x_2)\cdots \mathcal{F}_kf_m(x_m)$$
$$\text{and } \Large{\tau}^{k^m}_{(x_1,x_2,\cdots, x_m)}\left(f_1  \otimes \cdots \otimes f_m\right)\big((y_1,y_2,\cdots, y_m)\big)=\tau_{x_1}^kf_1(y_1)\,\cdots\tau_{x_m}^kf_m(y_m).$$
Also the $m$-fold counterparts of all the properties mentioned in Section \ref{prel} hold in this case.
\subsection{Multilinear Calder\'on-Zygmund type Singular Integrals in Dunkl setting}\label{statemnt of multi Calderon}
Motivated by the multilinear Calder\'on-Zygmund operators \cite{GrafakosMCZT}, we define the following multilinear singular operators in Dunkl setting.
\begin{defn}\label{defn of multi Dunkl-Calderon _Zygmung oper}
  An $m$-linear \emph{Dunkl-Calder\'on-Zygmund operator} is a function $\mathcal{T}$ defined on the $m$-fold product $\mathcal{S}(\mathbb{R}^d)\times \mathcal{S}(\mathbb{R}^d)\times \cdots \times \mathcal{S}(\mathbb{R}^d)$ and taking values on $\mathcal{S}'(\mathbb{R}^d)$ such that for all $f_j\in C_c^{\infty}(\mathbb{R}^d)$ with $\sigma (x)\notin \bigcap\limits_{j=1}^m supp\,f_j$ for all  $\sigma\in G$, $\mathcal{T}$ can be represented as 
  $$\mathcal{T}(\overrightarrow {f})(x)=\int_{(\mathbb{R}^{d})^m}K(x,y_1,y_2,\cdots,y_m)\prod\limits_{j=1}^m f_j(y_j)\,d\mu_k(y_j),$$
  where $K$ is a function defined away from the set $\mathcal{O}(\bigtriangleup _{m+1})$
  $$=: \big\{(x,y_1,y_2,\cdots,y_m)\in (\mathbb{R}^d)^{m+1}:x= \sigma_j (y_j) \text{ for some }\sigma_j \in G,\text{ for all }1\leq j \leq m \big\}$$
  which satisfies the following size estimate and smoothness estimates for some $0<\epsilon \leq 1$:
  \begin{equation}\label{size estimate of kernel}
    |K(y_0,y_1,y_2,\cdots,y_m)| \leq  C_K\,\big[\sum\limits_{j=1}^m \mu_k\big(B(y_0,d_G(y_0,y_j))\big)\big]^{-m} \left[\frac{\sum\limits_{j=1}^m d_G(y_0,y_j)}{\sum\limits_{j=1}^m |y_0-y_j|}\right]^{\epsilon},
  \end{equation}
 for all $(y_0,y_1,y_2,\cdots,y_m)\in (\mathbb{R}^d)^{m+1}\setminus \mathcal{O}(\bigtriangleup _{m+1})$;
 \begin{eqnarray}\label{smoothness estimate of kernel}
     &&|K(y_0,y_1,y_2,\cdots,y_n,\cdots, y_m)- K(y_0,y_1,y_2,\cdots,y'_n,\cdots,y_m)|\\
     &\leq& C_K \,\big[\sum\limits_{j=1}^m \mu_k\big(B(y_0,d_G(y_0,y_j))\big)\big]^{-m} 
     \left[\frac{|y_n-y'_n|}{\max\limits_{1\leq j\leq m} |y_0-y_j|}\right]^{\epsilon},\nonumber
  \end{eqnarray}
  whenever $|y_n-y'_n|\leq \max\limits_{1\leq j\leq m} d_G(y_0,y_j)/2$, for all $n\in \{0,1,\cdots,m\}$.
\end{defn}
Note that the size condition (\ref{size estimate of kernel}) guarantees that the above integral is convergent and hence pointwise $\mathcal{T}(\overrightarrow f)$ makes sense. Also, this definition of  multilinear Calder\'on-Zygmund operators in Dunkl setting  matches with the definition of multilinear Calder\'on-Zygmund operators in classical setting \cite{GrafakosMCZT} as well as with the definition of linear  Calder\'on-Zygmund operators in Dunkl setting \cite{TanSIOT1TLPT}.
Our main results regarding multilinear Dunkl-Calder\'on-Zygmund operators are the following two-weight and one-weight inequalities:
\begin{thm}\label{two weight calderon Zygmund}
 Let $1\leq p_1,p_2,\cdots,p_m<\infty$, $\overrightarrow P=(p_1,p_2,\cdots,p_m)$, $p$ be the number given by $1/p=1/p_1+1/p_2+\cdots +1/p_m$ and $v, w_1, w_2,\cdots,w_m$ be $G$-invariant weights with $v\in A^k_{\infty}$. Furthermore let $\mathcal{T}$ maps from $L^{q_1}(\mathbb{R}^d,\,d\mu_k)\times L^{q_2}(\mathbb{R}^d,\,d\mu_k)\times\cdots \times L^{q_m}(\mathbb{R}^d,\, d\mu_k)$ to $L^{q,\,\infty}(\mathbb{R}^d,\, d\mu_k)$ with norm $A$ for some $q,q_1,q_2,\cdots, q_m$ satisfying $1\leq q_1,q_2,\cdots, q_m<\infty$ with $1/q=1/q_1+1/q_2+\cdots +1/q_m$. Then the following hold:
  \begin{enumerate}[label=(\roman*)]
  \item\label{CZ two weigh weak thm}  if $p_j= 1 $ for some $1\leq j \leq m$ and 
the vector weight $(v,\overrightarrow w)\in A_{\overrightarrow P}^k$, then for all $\overrightarrow {f} \in L^{p_1}(\mathbb{R}^d,w_1\,d\mu_k)\times L^{p_2}(\mathbb{R}^d,w_2\,d\mu_k)\times\cdots \times L^{p_m}(\mathbb{R}^d,w_m\, d\mu_k)$, the following boundedness holds:
\begin{eqnarray*}
    \sup\limits_{t>0}\,t\Big(\int\limits_{ \{y\,\in{\mathbb{R}^d:\,|\mathcal{T} \overrightarrow {f} (y)|>t\}}}v(x)\,d\mu_k(x)\Big)^{{1}/{p}}\leq C(C_K+A){\prod\limits^m_{j=1}} \Big(   \int_{\mathbb{R}^d}|f_j(x)|^{p_j}w_j(x)d\mu_k(x) \Big)^{{1}/{p_j}};
        \end{eqnarray*}
        \item\label{CZ two weight strong thm} if $p_j> 1 $ for all $1\leq j \leq m$ and 
the vector weight $(v,\overrightarrow w)$ 
satisfies the bump condition (\ref{bump condt multi ap weights}) for some $t>1$, then for all $\overrightarrow {f} \in L^{p_1}(\mathbb{R}^d,w_1\,d\mu_k)\times L^{p_2}(\mathbb{R}^d,w_2\,d\mu_k)\times\cdots \times L^{p_m}(\mathbb{R}^d,w_m\, d\mu_k)$, the following boundedness holds:
\begin{eqnarray*}
        \Big(\int_{\mathbb{R}^d}|\mathcal{T}\overrightarrow {f} (x)|^p v(x)\, d\mu_k(x) \Big)^{{1}/{p}}\leq C(C_K+A)\,{\prod\limits^m_{j=1}} \Big(   \int_{\mathbb{R}^d}|f_j(x)|^{p_j}w_j(x)\,d\mu_k(x) \Big)^{{1}/{p_j}}.
        \end{eqnarray*}
    \end{enumerate}
\end{thm}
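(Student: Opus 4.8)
The strategy is the now-standard Lerner-type argument based on the sharp maximal function, transplanted to the Dunkl space of homogeneous type $(\mathbb{R}^d, |x-y|, d\mu_k)$, with the extra bookkeeping forced by the reflection group $G$. The first step is a pointwise estimate: I claim that for $0<\delta<1/m$ and for suitable $\overrightarrow{f}$,
\begin{equation*}
M^{k,\,\#}_{HL,\,\delta}\big(\mathcal{T}\overrightarrow{f}\big)(x)\leq C(C_K+A)\,\mathcal{M}^k_{HL}\overrightarrow{f}(x).
\end{equation*}
The proof of this fixes a ball $B=B(c_B,r_B)\ni x$, writes each $f_j=f_j^0+f_j^\infty$ with $f_j^0=f_j\mathbf{1}_{\mathcal{O}(B^*)}$ for a suitable dilate $B^*$ of $B$ \emph{whose $G$-orbit} we use (this is the point where $G$-orbits enter, replacing the usual $3B$), and then expands $\prod_j(f_j^0+f_j^\infty)$ into $2^m$ terms. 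The term with all $f_j^0$ is controlled by the assumed weak $L^q$ boundedness of $\mathcal{T}$ together with Kolmogorov's inequality; every term containing at least one $f_j^\infty$ is handled by inserting the constant $c=\mathcal{T}(\cdots)(c_B)$-type average and using the smoothness estimate \eqref{smoothness estimate of kernel} in the slot occupied by an $f^0$, or the size estimate \eqref{size estimate of kernel} when only $f^\infty$'s survive, summing the resulting geometric series in the annuli $\mathcal{O}(2^{j+1}B)\setminus\mathcal{O}(2^jB)$. Here one repeatedly uses \eqref{Volofball}, \eqref{VOLRADREL}, \eqref{V(x,y,d(x,y) comparison}), \eqref{orbit volume compa} and the $G$-invariance packaged in $d_G(x,y)\le|x-y|$ to pass between $d_G$-balls and ordinary balls.

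**Passing from the sharp-function estimate to the weighted bounds.** Once the pointwise inequality is in hand, part \ref{CZ two weight strong thm} follows by applying the Fefferman–Stein inequality $\|g\|_{L^p(v\,d\mu_k)}\le C\|M^{k,\,\#}_{HL,\,\delta}g\|_{L^p(v\,d\mu_k)}$, valid for $v\in A^k_\infty$ on a space of homogeneous type (this is exactly where the hypothesis $v\in A^k_\infty$ is used), to $g=\mathcal{T}\overrightarrow{f}$, and then invoking Theorem \ref{multi maximal funct two weight}\ref{multi maximal two weight strong thm} to bound $\|\mathcal{M}^k_{HL}\overrightarrow{f}\|_{L^p(v\,d\mu_k)}$ by the product of the weighted $L^{p_j}$ norms under the bump condition \eqref{bump condt multi ap weights}. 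There is the customary technical point that the Fefferman–Stein inequality a priori requires $\mathcal{T}\overrightarrow{f}$ to lie in some $L^{p_0}(v\,d\mu_k)$; this is dispensed with by a standard truncation/approximation argument (replace $\mathcal{T}$ by a truncated kernel, or work first with bounded compactly supported $f_j$ and a weight bounded above and below, then remove the restriction by monotone convergence), exactly as in \cite{LernerNMF, GrafkosMAMS}. For the weak-type endpoint \ref{CZ two weigh weak thm}, when some $p_j=1$ one cannot use the $M^{k,\,\#}_{HL,\delta}$ route directly; instead I would run the Calderón–Zygmund decomposition of the relevant $f_j$ at height $t$ adapted to $d\mu_k$, decompose $\mathcal{T}\overrightarrow{f}$ into a good part handled by the strong estimate already proved (at some larger exponents) and a bad part supported in $\bigcup\mathcal{O}(\text{cubes})$, and estimate the bad part using \eqref{smoothness estimate of kernel} and the $A^k_{\overrightarrow{P}}$ condition together with Theorem \ref{multi maximal funct two weight}\ref{multi maximal two weigh weak thm}; again this mirrors \cite{LernerNMF} with cubes replaced by $d_G$-balls and their $G$-orbits.

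**Main obstacle.** The genuinely new difficulty, and the step I expect to be most delicate, is the geometric handling of the reflection group in the sharp-function estimate. In the classical argument the decomposition $f_j=f_j\mathbf{1}_{3B}+f_j\mathbf{1}_{(3B)^c}$ interacts cleanly with the kernel, because "close to $x$" is a metric notion; here the kernel is singular on the whole $G$-orbit $\mathcal{O}(\bigtriangleup_{m+1})$, so the correct cutoff is $\mathbf{1}_{\mathcal{O}(B^*)}$, and one must verify that (i) $\mathcal{O}(B^*)$ still has measure comparable to $\mu_k(B)$ via \eqref{orbit volume compa}, (ii) for $y_j\notin\mathcal{O}(2^{j}B)$ and $x\in B$ one has $d_G(x,y_j)\sim|c_B-\sigma y_j|\sim 2^j r_B$ for the nearest reflection $\sigma$, so that the size and smoothness bounds can be summed over $j$, and (iii) the "average over the ball" constant $c$ extracted inside $M^{k,\#}_{HL,\delta}$ can be taken to be $\mathcal{T}$ of the tail evaluated at a single point without the orbit structure spoiling the cancellation. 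Managing (ii) cleanly requires the observation, already noted in the preliminaries, that $d_G$ satisfies the triangle inequality and $d_G(x,y)\le|x-y|$, combined with $G$-invariance of $h_k$ and hence of $\mu_k$ under $\sigma\in G$; once these are organized into a few lemmas about $G$-orbits of balls, the rest of the proof is a faithful, if lengthy, transcription of the Lerner scheme.
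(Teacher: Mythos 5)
There are two genuine gaps in your plan, both located in the sharp-function step. First, the pointwise estimate you announce, $M^{k,\,\#}_{HL,\,\delta}\big(\mathcal{T}\overrightarrow{f}\big)(x)\leq C(C_K+A)\,\mathcal{M}^k_{HL}\overrightarrow{f}(x)$, is too strong and cannot be reached by the argument you sketch. After cutting off with $\mathbf{1}_{\mathcal{O}(B^{*})}$, the tail terms are integrated over annuli of \emph{orbits}, and the averages that come out have the form $\mu_k(3^nB)^{-1}\int_{\mathcal{O}(3^nB^{**})}|f_j|\,d\mu_k$. The pieces $\sigma(3^nB^{**})$ with $\sigma\neq e$ need not contain $x$, and for $G=\mathbb{Z}_2$ in $d=1$ with $x$ far from the wall and $r(B)\ll|x|$, any ball containing both $x$ and $\sigma(3^nB^{**})$ has $\mu_k$-measure larger than $\mu_k(3^nB)$ by an unbounded factor, so these averages are \emph{not} dominated by $\mathcal{M}^k_{HL}\overrightarrow{f}(x)$; the gain $\big[d_G/|x-y|\big]^{\epsilon}$ in \eqref{size estimate of kernel}--\eqref{smoothness estimate of kernel} is far too weak to compensate. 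The correct statement (the paper's Proposition \ref{prop sharp maxi funct domi by multi maxi}) bounds the sharp function by $\sum_{(\sigma_{n_1},\dots,\sigma_{n_m})\in G^m}\mathcal{M}^k_{HL}\big(f_1\circ\sigma_{n_1},\dots,f_m\circ\sigma_{n_m}\big)(x)$, obtained by splitting each orbit into its $|G|$ reflected balls and changing variables; this is exactly where the hypothesis that $v,w_1,\dots,w_m$ are $G$-invariant --- which your argument never invokes --- becomes indispensable, since only then does $\|f_j\circ\sigma\|_{L^{p_j}(w_j\,d\mu_k)}=\|f_j\|_{L^{p_j}(w_j\,d\mu_k)}$ allow Theorem \ref{multi maximal funct two weight} to produce the stated right-hand side.

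Second, your treatment of the all-good term is insufficient: controlling $\big(\mu_k(B)^{-1}\int_B|\mathcal{T}\overrightarrow{f^0}|^{\delta}\big)^{1/\delta}$ by the \emph{assumed} $L^{q_1}\times\cdots\times L^{q_m}\to L^{q,\infty}$ bound plus Kolmogorov yields $C\,\mu_k(B)^{-1/q}\prod_j\|f_j\mathbf{1}_{\mathcal{O}(B^{*})}\|_{L^{q_j}(d\mu_k)}$, i.e.\ $L^{q_j}$-averages of the $f_j$, which are not dominated by $\mathcal{M}^k_{HL}\overrightarrow{f}(x)$ unless every $q_j=1$; since the $q_j$ are arbitrary, the maximal function you need never appears. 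The missing ingredient is the unweighted endpoint bound $L^{1}\times\cdots\times L^{1}\to L^{1/m,\infty}$ (the paper's Theorem \ref{1,1...1 to 1/m bddness}), itself proved by a multilinear Calder\'on--Zygmund decomposition adapted to $G$-orbits (with Lemma \ref{one time use lemma volume comparison}); only with this in hand does the Kolmogorov-type computation with exponent $1/m$ produce the product of $L^1$-averages. As a smaller point, for the weak endpoint \ref{CZ two weigh weak thm} the paper does not abandon the sharp-function route: it uses the weak-type variant, Proposition \ref{T f vectore lpw norm is domi by M f vector lpw norm} \ref{part 2 of T f vectore lpw norm is domi by M f vector lpw norm}, together with Theorem \ref{multi maximal funct two weight} \ref{multi maximal two weigh weak thm}; your proposed height-$t$ Calder\'on--Zygmund decomposition could in principle be made to work, but it is substantially heavier and still rests on the two ingredients you are missing above.
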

\begin{thm}\label{one weight calderon Zygmund}
    Let $1\leq p_1,p_2,\cdots,p_m<\infty$, $\overrightarrow P=(p_1,p_2,\cdots,p_m)$, $p$ be the number given by $1/p=1/p_1+1/p_2+\cdots +1/p_m$ and $w_1, w_2,\cdots,w_m$ be $G$-invariant weights and the vector weight $\overrightarrow w\in A_{\overrightarrow P}^k$. Furthermore let $\mathcal{T}$ maps from $L^{q_1}(\mathbb{R}^d,\,d\mu_k)\times L^{q_2}(\mathbb{R}^d,\,d\mu_k)\times\cdots \times L^{q_m}(\mathbb{R}^d,\, d\mu_k)$ to $L^{q,\,\infty}(\mathbb{R}^d,\, d\mu_k)$ with norm $A$ for some $q,q_1,q_2,\cdots, q_m$ satisfying $1\leq q_1,q_2,\cdots, q_m<\infty$ with $1/q=1/q_1+1/q_2+\cdots +1/q_m$. Then the following hold:
    \begin{enumerate}[label=(\roman*)]
  \item\label{CZ one weigh weak thm}  if $p_j= 1 $ for some $1\leq j \leq m$, then for all $\overrightarrow {f} \in L^{p_1}(\mathbb{R}^d,w_1\,d\mu_k)\times L^{p_2}(\mathbb{R}^d,w_2\,d\mu_k)\times\cdots \times L^{p_m}(\mathbb{R}^d,w_m\, d\mu_k)$, the following boundedness holds:
\begin{eqnarray*}
   && \sup\limits_{t>0}\,t\Big(\int\limits_{ \{y\,\in{\mathbb{R}^d:\,|\mathcal{T} \overrightarrow {f} (y)|>t\}}}\prod\limits_{j=1}^m w_j(x)^{p/p_j}\,d\mu_k(x)\Big)^{{1}/{p}}\\
    &\leq& C(C_K+A)\,{\prod\limits^m_{j=1}} \Big(   \int_{\mathbb{R}^d}|f_j(x)|^{p_j}w_j(x)d\mu_k(x) \Big)^{{1}/{p_j}};
\end{eqnarray*}
        
        \item\label{CZ one weigh strong thm} if $p_j> 1 $ for all $1\leq j \leq m$, then for all $\overrightarrow {f} \in L^{p_1}(\mathbb{R}^d,w_1\,d\mu_k)\times L^{p_2}(\mathbb{R}^d,w_2\,d\mu_k)\times\cdots \times L^{p_m}(\mathbb{R}^d,w_m\, d\mu_k)$, the following boundedness holds:
        \begin{eqnarray*}
        &&\Big(\int_{\mathbb{R}^d}|\mathcal{T}\overrightarrow {f} (x)|^p \prod\limits_{j=1}^m w_j(x)^{p/p_j}\, d\mu_k(x) \Big)^{{1}/{p}}\\
        &\leq& C(C_K+A)\,{\prod\limits^m_{j=1}} \Big(   \int_{\mathbb{R}^d}|f_j(x)|^{p_j}w_j(x)\,d\mu_k(x) \Big)^{{1}/{p_j}}.
        \end{eqnarray*}
    \end{enumerate}
\end{thm}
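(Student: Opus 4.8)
The plan is to deduce Theorem~\ref{one weight calderon Zygmund} directly from the two-weight Theorem~\ref{two weight calderon Zygmund} by specializing the target weight, so that all of the genuine analytic content is carried by the latter. Set $v:=\prod_{j=1}^{m} w_j^{p/p_j}$, which is $G$-invariant since each $w_j$ is. By the convention recorded just after the definition of $A_{\overrightarrow{P}}(X,\rho,d\mu)$, the hypothesis ``$\overrightarrow{w}\in A^k_{\overrightarrow{P}}$'' is precisely the statement that the vector weight $(v,\overrightarrow{w})$ lies in $A^k_{\overrightarrow{P}}$. Moreover, since $(\mathbb{R}^d,|x-y|,d\mu_k)$ is a space of homogeneous type, the structure theory of multiple Muckenhoupt weights (the analogue in homogeneous spaces of \cite{LernerNMF}; see also \cite{GrafkosMAMS}) gives $v=\prod_j w_j^{p/p_j}\in A^k_{mp}\subset A^k_{\infty}$. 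Hence the standing hypothesis ``$v\in A^k_{\infty}$'' of Theorem~\ref{two weight calderon Zygmund} holds automatically for this $v$.

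First I would settle part~\ref{CZ one weigh weak thm}. When $p_j=1$ for some $j$, we have just observed that $(v,\overrightarrow{w})\in A^k_{\overrightarrow{P}}$ and $v\in A^k_{\infty}$, so Theorem~\ref{two weight calderon Zygmund}\ref{CZ two weigh weak thm} applies verbatim; substituting $v=\prod_j w_j^{p/p_j}$ into its weak-type conclusion is literally the asserted inequality, with constant $C(C_K+A)$.

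For part~\ref{CZ one weigh strong thm} the only ingredient not yet available is the bump condition (\ref{bump condt multi ap weights}) for some $t>1$, and I would produce it from the reverse H\"older (self-improvement) property of $A_{\infty}$ weights on a space of homogeneous type. Write $\sigma_j:=w_j^{\,1-p_j'}$ and use the identity $-p_j'/p_j=1-p_j'$, so the $j$-th bracket in (\ref{bump condt multi ap weights}) equals $\big(\frac{1}{\mu_k(B)}\int_B\sigma_j^{\,t}\,d\mu_k\big)^{1/(tp_j')}$. Since all $p_j>1$ here, ``$\overrightarrow{w}\in A^k_{\overrightarrow{P}}$'' forces $\sigma_j\in A^k_{mp_j'}\subset A^k_{\infty}$, so there is $t_j>1$ with $\big(\frac{1}{\mu_k(B)}\int_B\sigma_j^{\,t_j}\,d\mu_k\big)^{1/t_j}\le C\,\frac{1}{\mu_k(B)}\int_B\sigma_j\,d\mu_k$ for every ball $B$. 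Taking $t:=\min_j t_j>1$ and combining with the $A^k_{\overrightarrow{P}}$ bound for $(v,\overrightarrow{w})$ yields (\ref{bump condt multi ap weights}) for that $t$. Theorem~\ref{two weight calderon Zygmund}\ref{CZ two weight strong thm} then applies, and substituting $v=\prod_j w_j^{p/p_j}$ gives the strong-type inequality with constant $C(C_K+A)$.

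I do not expect a hard analytic obstacle at this stage: the entire difficulty has been pushed into Theorem~\ref{two weight calderon Zygmund}, and what remains are two standard weight-theoretic facts --- that $\overrightarrow{w}\in A^k_{\overrightarrow{P}}$ implies both $\prod_j w_j^{p/p_j}\in A^k_{\infty}$ and $w_j^{1-p_j'}\in A^k_{\infty}$ --- together with the reverse H\"older inequality on $(\mathbb{R}^d,|x-y|,d\mu_k)$; these transfer unchanged from the general homogeneous-space theory and use nothing about the reflection group beyond the doubling of $\mu_k$ already noted in Section~\ref{homogeneous space section}. If one preferred to avoid quoting Theorem~\ref{two weight calderon Zygmund}, an equally short alternative is the direct route: the pointwise sharp-maximal estimate $M^{k,\,\#}_{HL,\,\delta}(\mathcal{T}\overrightarrow{f})\lesssim \mathcal{M}^k_{HL}\overrightarrow{f}$ for small $\delta>0$, the Fefferman--Stein inequality for $v=\prod_j w_j^{p/p_j}\in A^k_{\infty}$, and the one-weight maximal bound Theorem~\ref{multi maximal funct one weight} combine to give part~\ref{CZ one weigh strong thm}, while part~\ref{CZ one weigh weak thm} follows from the weak-type endpoint Theorem~\ref{multi maximal funct two weight}\ref{multi maximal two weigh weak thm} after the same sharp-maximal reduction.
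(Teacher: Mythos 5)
Your argument is correct, and for the weak-type part it is essentially the paper's own proof in disguise: the paper establishes Theorem \ref{one weight calderon Zygmund}\ref{CZ one weigh weak thm} by combining Proposition \ref{T f vectore lpw norm is domi by M f vector lpw norm}\ref{part 2 of T f vectore lpw norm is domi by M f vector lpw norm} with Theorem \ref{multi maximal funct two weight}\ref{multi maximal two weigh weak thm} for $v=\prod_j w_j^{p/p_j}$ (citing that $\overrightarrow w\in A^k_{\overrightarrow P}$ forces $v\in A^k_\infty$), which is exactly what you obtain by quoting Theorem \ref{two weight calderon Zygmund}\ref{CZ two weigh weak thm} with that $v$, since by the paper's convention $\overrightarrow w\in A^k_{\overrightarrow P}$ means precisely $(v,\overrightarrow w)\in A^k_{\overrightarrow P}$.

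Where you genuinely diverge is the strong-type part. The paper does not pass through the bump condition (\ref{bump condt multi ap weights}) at all: it applies Proposition \ref{T f vectore lpw norm is domi by M f vector lpw norm}\ref{part 1 of T f vectore lpw norm is domi by M f vector lpw norm} and then invokes the one-weight multilinear maximal bound, Theorem \ref{multi maximal funct one weight}, which is quoted from the homogeneous-space literature and already encodes the self-improvement of $A^k_{\overrightarrow P}$. You instead manufacture the bump condition for some $t>1$ from $\overrightarrow w\in A^k_{\overrightarrow P}$ via the dual weights $\sigma_j=w_j^{1-p_j'}$ and their reverse H\"older inequality, and then cite Theorem \ref{two weight calderon Zygmund}\ref{CZ two weight strong thm}. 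This is a valid route, and it buys a cleaner logical structure (the one-weight theorem becomes a pure corollary of the two-weight one), at the price of two auxiliary weight-theoretic facts you should cite or prove in the setting $(\mathbb{R}^d,|x-y|,d\mu_k)$: (a) that $\overrightarrow w\in A^k_{\overrightarrow P}$ with all $p_j>1$ implies $\sigma_j\in A^k_{mp_j'}\subset A^k_\infty$ (the homogeneous-space analogue of the Lerner et al.\ characterization, which is a H\"older-inequality argument and does hold here), and (b) the reverse H\"older inequality for $A^k_\infty$ weights. On (b), note that in a general space of homogeneous type one may only get the weak form with a dilated ball on the right-hand side; this is harmless in your argument, since $\frac{1}{\mu_k(B)}\int_B\sigma_j^t\,d\mu_k\leq C\big(\frac{1}{\mu_k(2B)}\int_{2B}\sigma_j\,d\mu_k\big)^t$ together with doubling of $\mu_k$ lets you run the same estimate with the $A^k_{\overrightarrow P}$ condition tested on $2B$, but the caveat deserves a sentence. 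Your closing alternative (sharp-maximal domination plus Theorem \ref{multi maximal funct one weight}) is precisely the paper's actual proof.
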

\subsection{Bilinear Multipliers Operators in Dunkl setting}\label{statemnt of bilinear multi}
In order to keep the presentation simple, in this section we will limit ourselves to the case $m=2$ only. However, it is not too hard to extend our results to any natural number $m>2$.
\par In analogy to the classical case, for a bounded function $\bf m$ on $\mathbb{R}^d\times \mathbb{R}^d$ define the \emph{bilinear Dunkl multiplier} $\mathcal{T}_{\bf m}$ as 
$$\mathcal{T}_{\bf m}(f_1,f_2)(x)=\int_{\mathbb{R}^{2d}}{\bf m}(\xi,\eta)\mathcal{F}_kf_1(\xi)\mathcal{F}_kf_2(\eta)E_k(ix,\xi)E_k(ix,\eta)\,d\mu_k(\xi)d\mu_k(\eta)$$
for all $f_1,f_2\in \mathcal{S}(\mathbb{R}^d)$.
Next we state our main results for bilinear multiplier operators.
\begin{thm}\label{two weight bilinear multi}
    Let $1\leq p_1,p_2<\infty$, $p$ be the number given by $1/p=1/p_1+1/p_2$ and $L\in \mathbb{N}$ be such that $L>2d+2\lfloor d_k \rfloor+4$. If ${\bf m}\in C^L\left(\mathbb{R}^d\times \mathbb{R}^d \setminus\{(0,0)\}\right)$ be a function satisfying
 \begin{equation}\label{usual deriv condtn m}
     |\partial^{\alpha}_{\xi}\partial^{\beta}_{\eta}{\bf m}(\xi,\eta)|\leq C_{\alpha,\,\beta}\left(|\xi|+|\eta|\right)^{-(|\alpha|+|\beta|)}
 \end{equation}
 for all multi-indices $\alpha,\beta \in \left(\mathbb{N}\cup\{0\}\right)^d$ such that $|\alpha|+|\beta|\leq L$ and for all $(\xi,\eta)\in \mathbb{R}^d \times \mathbb{R}^d\setminus\{(0,0)\}$ and $v, w_1, w_2$ be $G$-invariant weights with $v\in A^k_{\infty}$;
 then the following hold:
 \begin{enumerate}[label=(\roman*)]
 \item\label{bi multi two weigh weak thm}  if at least one of $p_1$ or $p_2$ is $1$ and 
the vector weight $(v,(w_1,w_2))\in A^k_{(p_1,p_2)}$, then for all $f_1 \in L^{p_1}(\mathbb{R}^d,w_1\,d\mu_k)$ and $f_2\in L^{p_2}(\mathbb{R}^d,w_2\,d\mu_k)$, the following boundedness holds:
\begin{eqnarray*}
    \sup\limits_{t>0}\,t\Big(\int\limits_{ \{y\,\in{\mathbb{R}^d:\,|\mathcal{T}_{\bf m} \overrightarrow {f} (y)|>t\}}}v(x)\,d\mu_k(x)\Big)^{{1}/{p}}
    \leq C\,{\prod\limits^2_{j=1}} \Big(   \int_{\mathbb{R}^d}|f_j(x)|^{p_j}w_j(x)d\mu_k(x) \Big)^{{1}/{p_j}};
    \end{eqnarray*}
     \item\label{bi multi two weigh strong thm}  if both $p_1,p_2> 1 $ and 
the vector weight $(v,\overrightarrow w)$ 
satisfies the bump condition (\ref{bump condt multi ap weights}) with $m=2$ for some $t>1$, then for all $f_1 \in L^{p_1}(\mathbb{R}^d,w_1\,d\mu_k)$ and $f_2\in L^{p_2}(\mathbb{R}^d,w_2\,d\mu_k)$, the following boundedness holds:
\begin{eqnarray*}
        \Big(\int_{\mathbb{R}^d}|\mathcal{T}_{\bf m}\overrightarrow {f} (x)|^p v(x)\, d\mu_k(x) \Big)^{{1}/{p}}\leq C\,{\prod\limits^2_{j=1}} \Big(   \int_{\mathbb{R}^d}|f_j(x)|^{p_j}w_j(x)\,d\mu_k(x) \Big)^{{1}/{p_j}}.
         \end{eqnarray*}
 \end{enumerate}
\end{thm}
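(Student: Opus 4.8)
The plan is to reduce Theorem~\ref{two weight bilinear multi} to Theorem~\ref{two weight calderon Zygmund} by showing that, under the hypothesis \eqref{usual deriv condtn m}, the bilinear Dunkl multiplier operator $\mathcal{T}_{\bf m}$ is a bilinear Dunkl--Calder\'on--Zygmund operator in the sense of Definition~\ref{defn of multi Dunkl-Calderon _Zygmung oper} (with $m=2$), whose associated kernel satisfies the size estimate \eqref{size estimate of kernel} and the smoothness estimates \eqref{smoothness estimate of kernel} for some $0<\epsilon\le 1$, and which additionally maps into some weak $L^{q,\infty}$ space off-diagonally. Once this is established, both parts~\ref{bi multi two weigh weak thm} and~\ref{bi multi two weigh strong thm} follow immediately by invoking parts~\ref{CZ two weigh weak thm} and~\ref{CZ two weight strong thm} of Theorem~\ref{two weight calderon Zygmund} with $m=2$, $p_1,p_2$ as given and any admissible $q_1=q_2=2$ (so $q=1$), absorbing the constants $C_K$ and $A$ into the universal constant $C$.

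The first step is to produce an explicit kernel representation. Decompose ${\bf m}$ via a Littlewood--Paley partition of unity: pick a smooth radial $\Phi$ supported in an annulus $\{1/2\le |(\xi,\eta)|\le 2\}$ of $\mathbb{R}^{2d}$ with $\sum_{\ell\in\mathbb{Z}}\Phi(2^{-\ell}(\xi,\eta))=1$ away from the origin, set ${\bf m}_\ell(\xi,\eta)={\bf m}(\xi,\eta)\,\Phi(2^{-\ell}(\xi,\eta))$, so that \eqref{usual deriv condtn m} gives $\|\partial^\alpha_\xi\partial^\beta_\eta {\bf m}_\ell\|_\infty\lesssim 2^{-\ell(|\alpha|+|\beta|)}$ with support in $|(\xi,\eta)|\sim 2^\ell$. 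For each $\ell$, write the piece of $\mathcal{T}_{\bf m}$ coming from ${\bf m}_\ell$ using the integral kernel
$$K_\ell(x,y_1,y_2)=\int_{\mathbb{R}^{2d}}{\bf m}_\ell(\xi,\eta)\,E_k(ix,\xi)E_k(ix,\eta)\,E_k(-i\xi,y_1)E_k(-i\eta,y_2)\,d\mu_k(\xi)\,d\mu_k(\eta),$$
which by the product structure of Dunkl objects on $(\mathbb{R}^d)^2$ (Section~\ref{statemnt of multi Calderon}) is a genuine Dunkl convolution kernel; summing gives $K=\sum_\ell K_\ell$. Here the recent pointwise multiplier estimates of Dziuba\'nski--Hejna cited in the excerpt (as used already in the proof of Theorem~\ref{Littlewood-Paley l2 thm}, e.g.\ \cite[eq.~(4.30)--(4.31)]{HejnaRODTONRK}) are the crucial input: applied on $(\mathbb{R}^d)^2$ with the $L$ derivatives of ${\bf m}_\ell$ controlled, they give, for each $\ell$, a bound of the form
$$|K_\ell(x,y_1,y_2)|\lesssim \frac{2^{\ell(2d_k)}}{\big(\mu_k(B(2^\ell x,1))\big)^{2}}\,\frac{1}{(1+2^\ell|x-y_1|)(1+2^\ell|x-y_2|)}\,\frac{1}{(1+2^\ell d_G(x,y_1))^{N}(1+2^\ell d_G(x,y_2))^{N}}$$
for as large an $N$ as the number of available derivatives $L$ permits (the condition $L>2d+2\lfloor d_k\rfloor+4$ is exactly what guarantees enough decay after passing from $2d$-dimensional smoothness to the two one-dimensional factors and summing the geometric series), together with an analogous estimate for the difference $K_\ell(x,y_1,y_2)-K_\ell(x,y_1',y_2)$ (and its $x$- and $y_2$-analogues) carrying an extra gain $\min\{1,2^\ell|y_1-y_1'|\}$. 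Converting the volume factors $\mu_k(B(2^\ell x,1))$ into $V_G$ factors via \eqref{VOLRADREL} and \eqref{V(x,y,d(x,y) comparison}) exactly as in the proof of Theorem~\ref{Littlewood-Paley l2 thm}, and then summing over $\ell\in\mathbb{Z}$ (splitting at $2^\ell\max_j d_G(x,y_j)\sim 1$), yields precisely \eqref{size estimate of kernel} and \eqref{smoothness estimate of kernel} with $\epsilon$ any number in $(0,1]$ below the surplus decay.

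It remains to check the off-diagonal boundedness hypothesis required by Theorem~\ref{two weight calderon Zygmund}, namely that $\mathcal{T}_{\bf m}$ maps $L^{2}\times L^{2}\to L^{1,\infty}$ (or even $L^1$): since ${\bf m}$ is bounded, $|\mathcal{T}_{\bf m}(f_1,f_2)(x)|$ is pointwise dominated, via Plancherel for the Dunkl transform together with Cauchy--Schwarz in $\xi$ and $\eta$, by a constant times $\|f_1\|_{L^2(d\mu_k)}\|f_2\|_{L^2(d\mu_k)}$ is not quite what we want pointwise, so instead one argues: by the Dunkl Plancherel theorem and the product formula $\mathcal{F}_{k^2}(f_1\otimes f_2)=\mathcal{F}_kf_1\otimes\mathcal{F}_kf_2$, the operator $(f_1,f_2)\mapsto \mathcal{T}_{\bf m}(f_1,f_2)$ restricted to the diagonal is the composition of tensoring, a bounded Fourier multiplier on $L^2((\mathbb{R}^d)^2,d\mu_{k^2})$, and restriction to the diagonal; one checks directly from the annular decomposition and the kernel bounds above that this composite is bounded $L^2(d\mu_k)\times L^2(d\mu_k)\to L^1(d\mu_k)$ (each $K_\ell$-piece is, uniformly, with summable constants because of the surplus decay), which in particular gives the $L^{1,\infty}$ bound with some finite norm $A$. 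Finally, feed Definition~\ref{defn of multi Dunkl-Calderon _Zygmung oper} with this $K$, this $A$, $q_1=q_2=2$, and apply Theorem~\ref{two weight calderon Zygmund}\ref{CZ two weigh weak thm}--\ref{CZ two weight strong thm} to conclude.

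I expect the main obstacle to be the kernel estimates: one must pass correctly from the $2d$-dimensional Coifman--Meyer condition \eqref{usual deriv condtn m} to pointwise bounds on the Dunkl-side kernel $K_\ell$ on $(\mathbb{R}^d)^2$, keeping track of \emph{two} group-orbit distances $d_G(x,y_1),d_G(x,y_2)$ simultaneously and producing the correct combined volume factor $\big[\mu_k(B(x,d_G(x,y_1)))+\mu_k(B(x,d_G(x,y_2)))\big]^{-2}$ appearing in \eqref{size estimate of kernel}; this forces a careful case analysis according to which of $d_G(x,y_1)$, $d_G(x,y_2)$ is larger and a delicate use of \eqref{VOLRADREL}, and it is here that the precise threshold $L>2d+2\lfloor d_k\rfloor+4$ on the number of derivatives is consumed. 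A secondary technical point is verifying that $\mathcal{T}_{\bf m}$ genuinely fits the \emph{a priori} framework of Definition~\ref{defn of multi Dunkl-Calderon _Zygmung oper} (well-definedness on $\mathcal{S}\times\mathcal{S}$ into $\mathcal{S}'$, and the integral representation valid when the $G$-orbits of the supports are separated), which is routine given the product structure of Dunkl analysis on $(\mathbb{R}^d)^m$ recalled in Section~\ref{statement main theorem}.
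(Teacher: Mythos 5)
Your reduction to Theorem \ref{two weight calderon Zygmund} and your treatment of the kernel are essentially the paper's route: decompose ${\bf m}$ dyadically on $\mathbb{R}^{2d}$, represent each piece by a Dunkl-convolution kernel on the product space, apply the Dziuba\'nski--Hejna pointwise estimates, convert volumes via (\ref{VOLRADREL}) and (\ref{V(x,y,d(x,y) comparison}), and sum over scales by splitting at $2^{\ell}d_{G\times G}\big((x,x),(y_1,y_2)\big)\sim 1$ to get (\ref{size estimate of kernel}) and (\ref{smoothness estimate of kernel}) with $\epsilon=1$. That part is sound in outline.

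The genuine gap is the a priori boundedness hypothesis of Theorem \ref{two weight calderon Zygmund}, i.e.\ the existence of the norm $A$ for $\mathcal{T}_{\bf m}:L^{q_1}\times L^{q_2}\to L^{q,\infty}$. Your argument for $L^2\times L^2\to L^{1}$ (or $L^{1,\infty}$) does not work: ``tensoring, a bounded multiplier on $L^2(d\mu_{k^2})$, then restriction to the diagonal'' is not a bounded composition, since restriction to the diagonal is not controlled on $L^2(\mathbb{R}^{2d},d\mu_{k^2})$; and the claim that the annular pieces have ``summable constants because of the surplus decay'' is false --- the kernel bounds for each $K_\ell$ give an $L^2\times L^2\to L^1$ operator norm that is \emph{uniform} in $\ell$ (the surplus decay is in the spatial variables at fixed scale, not across scales), so the sum over $\ell\in\mathbb{Z}$ diverges. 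Overcoming exactly this divergence is the content of the Coifman--Meyer-type theorem, which is why the paper first proves the unweighted Theorem \ref{bilinear multi main thm}: there the sum over scales is handled by expanding the localized symbols in Fourier series, using Cauchy--Schwarz in $j$, and invoking the Littlewood--Paley square-function and maximal theorems (Theorems \ref{Littlewood-Paley l2 thm} and \ref{Littlewood-Plaey l infinty thm}, together with the support Lemma \ref{entering phi lemma} for the off-diagonal parts). With Theorem \ref{bilinear multi main thm} in hand one takes, say, $q_1=q_2=4$, $q=2$ (your choice $q_1=q_2=2$, $q=1$ is not covered by that theorem, which requires $q>1$) to supply $A$, and only then does the application of Theorem \ref{two weight calderon Zygmund} close the argument. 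As written, your proposal assumes the hardest ingredient without a valid proof.
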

\begin{thm}\label{one weight bilinear multi}
    Let $1\leq p_1,p_2<\infty$ and $p$ be the number given by $1/p=1/p_1+1/p_2$ and $L\in \mathbb{N}$ be such that $L>2d+2\lfloor d_k \rfloor+4$. If ${\bf m}\in C^L\left(\mathbb{R}^d\times \mathbb{R}^d \setminus\{(0,0)\}\right)$ be a function which satisfies the condition (\ref{usual deriv condtn m}) for all multi-indices $\alpha,\beta \in \left(\mathbb{N}\cup\{0\}\right)^d$ such that $|\alpha|+|\beta|\leq L$ and for all $(\xi,\eta)\in \mathbb{R}^d \times \mathbb{R}^d\setminus\{(0,0)\}$ and $w_1, w_2$ be $G$-invariant weights with $(w_1,w_2)\in A^k_{(p_1,\,p_2)}$;
 then the following hold:
 \begin{enumerate}[label=(\roman*)]
 \item\label{bi multi one weigh weak thm}   if at least one of $p_1$ or $p_2$ is $1$, then for all $f_1 \in L^{p_1}(\mathbb{R}^d,w_1\,d\mu_k)$ and $f_2\in L^{p_2}(\mathbb{R}^d,w_2\,d\mu_k)$, the following boundedness holds:
\begin{eqnarray*}
    \sup\limits_{t>0}\,t\Big(\int\limits_{ \{y\,\in{\mathbb{R}^d:\,|\mathcal{T}_{\bf m} \overrightarrow {f} (y)|>t\}}}\prod\limits_{j=1}^2 w_j^{p/p_j}(x)\,d\mu_k(x)\Big)^{{1}/{p}}
    \leq C\,{\prod\limits^2_{j=1}} \Big(   \int_{\mathbb{R}^d}|f_j(x)|^{p_j}w_j(x)d\mu_k(x) \Big)^{{1}/{p_j}};
    \end{eqnarray*}
     \item\label{bi multi one weigh strong thm}  if both $p_1,p_2> 1 $, then for all $f_1 \in L^{p_1}(\mathbb{R}^d,w_1\,d\mu_k)$ and $f_2\in L^{p_2}(\mathbb{R}^d,w_2\,d\mu_k)$, the following boundedness holds:
\begin{eqnarray*}
        \Big(\int_{\mathbb{R}^d}|\mathcal{T}_{\bf m}\overrightarrow {f} (x)|^p w_1^{p/p_1}(x)w_2^{p/p_2}(x)\, d\mu_k(x) \Big)^{{1}/{p}}
        \leq C\,{\prod\limits^2_{j=1}} \Big(   \int_{\mathbb{R}^d}|f_j(x)|^{p_j}w_j(x)\,d\mu_k(x) \Big)^{{1}/{p_j}}.
         \end{eqnarray*}
 \end{enumerate}
\end{thm}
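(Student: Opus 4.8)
The plan is to derive Theorem \ref{one weight bilinear multi} from the one-weight inequality for multilinear Dunkl--Calder\'on--Zygmund operators, Theorem \ref{one weight calderon Zygmund} with $m=2$, by proving that under the hypotheses the bilinear Dunkl multiplier $\mathcal{T}_{\bf m}$ is a bilinear Dunkl--Calder\'on--Zygmund operator in the sense of Definition \ref{defn of multi Dunkl-Calderon _Zygmung oper} and that it is bounded on a suitable product of unweighted Lebesgue spaces. For the unweighted boundedness I would invoke the Coifman--Meyer type theorem in the Dunkl setting, Theorem \ref{bilinear multi main thm}: since condition \eqref{usual deriv condtn m} is assumed for all multi-indices with $|\alpha|+|\beta|\le L$, that theorem applies and gives in particular that $\mathcal{T}_{\bf m}$ maps $L^{3}(\mathbb{R}^d,d\mu_k)\times L^{3}(\mathbb{R}^d,d\mu_k)$ boundedly into $L^{3/2}(\mathbb{R}^d,d\mu_k)$, hence into $L^{3/2,\,\infty}(\mathbb{R}^d,d\mu_k)$, with operator norm controlled by the constants $C_{\alpha,\beta}$ in \eqref{usual deriv condtn m}. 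This furnishes the a priori boundedness hypothesis of Theorem \ref{one weight calderon Zygmund} with the triple $(q_1,q_2,q)=(3,3,3/2)$.

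The core of the argument is the verification of the kernel estimates. Writing, at least formally, $\mathcal{T}_{\bf m}(f_1,f_2)(x)=\int_{\mathbb{R}^{2d}}K(x,y_1,y_2)f_1(y_1)f_2(y_2)\,d\mu_k(y_1)d\mu_k(y_2)$, and using the product root system $R^{2}$ together with the factorization of Dunkl objects on $(\mathbb{R}^d)^2$ described in Section \ref{statement main theorem}, the kernel $K(x,y_1,y_2)$ is the restriction to the diagonal $x_1=x_2=x$ of $\tau^{k^2}_{(x_1,x_2)}\bigl(\mathcal{F}_{k^2}^{-1}{\bf m}\bigr)(-y_1,-y_2)$. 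To handle the singularity of ${\bf m}$ at the origin I would perform a Littlewood--Paley decomposition: fix $\Phi\in C_c^\infty(\mathbb{R}^{2d})$ supported in an annulus with $\sum_{j\in\mathbb{Z}}\Phi(2^{-j}\,\cdot)\equiv1$ on $\mathbb{R}^{2d}\setminus\{0\}$, set ${\bf m}_j:={\bf m}\cdot\Phi(2^{-j}\,\cdot)$ so that ${\bf m}=\sum_j{\bf m}_j$ off the origin, each ${\bf m}_j\in C_c^\infty$ is supported where $|(\xi,\eta)|\sim 2^{j}$, and, by \eqref{usual deriv condtn m} and the Leibniz rule, $\|\partial^\gamma{\bf m}_j\|_{L^\infty}\le C\,2^{-j|\gamma|}$ uniformly in $j$ for $|\gamma|\le L$. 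Letting $K_j$ be the (bounded, integrable) kernel of $\mathcal{T}_{{\bf m}_j}$, I would apply the pointwise multiplier estimates of Dziuba\'nski--Hejna \cite{HejnaRODTONRK} in the Dunkl setting on $(\mathbb{R}^{2d},R^{2},k^{2})$ and, after cancelling the scaling factor $2^{2jd_k}$ exactly as in the proof of Theorem \ref{Littlewood-Paley l2 thm}, obtain for every $N\le L-1$ a size bound of the form
\[
|K_j(x,y_1,y_2)|\le \frac{C}{\mu_k\bigl(B(x,2^{-j})\bigr)\,\bigl(\mu_k\bigl(B(y_1,2^{-j})\bigr)\,\mu_k\bigl(B(y_2,2^{-j})\bigr)\bigr)^{1/2}}\,\frac{1}{\bigl(1+2^j\max_{i}|x-y_i|\bigr)\,\bigl(1+2^j\max_{i}d_G(x,y_i)\bigr)^{N-1}},
\]
together with the companion difference estimate for $K_j(x,y_1,y_2)-K_j(x,\ldots,y_n',\ldots)$ carrying an extra factor $2^j|y_n-y_n'|$ when $2^j|y_n-y_n'|\le1$ (and a comparable bound, obtained by splitting, when $2^j|y_n-y_n'|>1$), for each $n\in\{0,1,2\}$. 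Converting $\mu_k(B(\cdot,2^{-j}))^{-1}$ into $\bigl[(2^jd_G)^{d}+(2^jd_G)^{d_k}\bigr]\big/V_G(\cdot,\cdot,d_G)$ by means of \eqref{VOLRADREL} and \eqref{V(x,y,d(x,y) comparison}, and then summing over $j\in\mathbb{Z}$ --- split according to $2^jd_G\le1$ and $2^jd_G>1$ as in the proof of Theorem \ref{Littlewood-Paley l2 thm}, the resulting geometric-type series converging precisely because $L>2d+2\lfloor d_k\rfloor+4$ --- produces the size estimate \eqref{size estimate of kernel} and the smoothness estimates \eqref{smoothness estimate of kernel} for $K$ with $m=2$ and some $0<\epsilon\le1$, with $C_K$ bounded by the $C_{\alpha,\beta}$. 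Finally, for $f_1,f_2\in C_c^\infty(\mathbb{R}^d)$ with $\sigma(x)\notin\mathrm{supp}\,f_1\cap\mathrm{supp}\,f_2$ for all $\sigma\in G$ one checks that $\sum_j K_j$ converges absolutely on the relevant region and represents $\mathcal{T}_{\bf m}$ there, so that $\mathcal{T}_{\bf m}$ is indeed a bilinear Dunkl--Calder\'on--Zygmund operator.

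With both facts in hand, Theorem \ref{one weight calderon Zygmund} applied with $m=2$ --- using that $w_1,w_2$ are $G$-invariant and $(w_1,w_2)\in A^k_{(p_1,p_2)}$ --- yields at once part \ref{bi multi one weigh weak thm} (when $p_1=1$ or $p_2=1$) and part \ref{bi multi one weigh strong thm} (when $p_1,p_2>1$). I expect the main obstacle to be the kernel estimate of the second paragraph: the delicate points are porting the Dziuba\'nski--Hejna pointwise bounds to the product/bilinear situation and then carrying out the dimensional bookkeeping --- passing between balls of radius $2^{-j}$ and of radius $d_G$, between $|x-y_i|$ and $d_G(x,y_i)$, and between the homogeneous exponents $d$ and $d_k$ --- so that after summation the estimates take exactly the form required by \eqref{size estimate of kernel}--\eqref{smoothness estimate of kernel}. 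This is where the hypothesis $L>2d+2\lfloor d_k\rfloor+4$ is consumed, and it explains why the smoothness threshold on ${\bf m}$ here is not the same as in the classical Coifman--Meyer theorem.
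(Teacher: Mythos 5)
Your proposal is correct and follows essentially the same route as the paper: establish the unweighted Coifman--Meyer bound (Theorem \ref{bilinear multi main thm}) to supply the a priori $L^{q_1}\times L^{q_2}\to L^{q,\infty}$ hypothesis, dyadically decompose ${\bf m}$ and use the Dziuba\'nski--Hejna pointwise estimates on the product Dunkl structure $(\mathbb{R}^{2d},R^2,k^2)$ together with \eqref{VOLRADREL}, \eqref{V(x,y,d(x,y) comparison} and the splitting at $2^jd_{G\times G}\sim 1$ (where $L>2d+2\lfloor d_k\rfloor+4$ is used) to verify \eqref{size estimate of kernel}--\eqref{smoothness estimate of kernel} with $m=2$, and then conclude by Theorem \ref{one weight calderon Zygmund} and the $G$-invariance of the weights. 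The only cosmetic difference is that you fix the explicit triple $(q_1,q_2,q)=(3,3,3/2)$, which the paper leaves implicit.
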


\begin{rem}
In our result, smoothness condition on ${\bf m}$ is more than what one may expect from the viewpoint of the classical case \cite[Corollary 1.2]{TomitaAHTMTFMO}. We do not know whether the above result can be proven by assuming less number of derivatives on ${\bf m}$. 
\end{rem}

\section{Proofs of The Weighted Inequalities for Singular Integrals }\label{proof main theorem singular integral}
Throughout this section we will assume that $\mathcal{T}$ is an $m$-linear Dunkl-Calder\'on-Zygmund operator and $\mathcal{T}$ maps from $L^{q_1}(\mathbb{R}^d,\,d\mu_k)\times L^{q_2}(\mathbb{R}^d,\,d\mu_k)\times\cdots \times L^{q_m}(\mathbb{R}^d,\, d\mu_k)$ to $L^{q,\,\infty}(\mathbb{R}^d,\, d\mu_k)$ with norm $A$ for some $q,q_1,q_2,\cdots, q_m$ satisfying $1\leq q_1,q_2,\cdots, q_m<\infty$ with $1/q=1/q_1+1/q_2+\cdots +1/q_m$.
From this a priori boundedness condition, we can prove the following 
theorem.
\begin{thm}\label{1,1...1 to 1/m bddness}
    Let $\mathcal{T}$ be a multilinear operator as described above. Then $\mathcal{T}$ extends to a bounded operator from the $m$-fold product $L^{1}(\mathbb{R}^d,\,d\mu_k)\times L^{1}(\mathbb{R}^d,\,d\mu_k)\times\cdots \times L^{1}(\mathbb{R}^d,\, d\mu_k)$ to $L^{1/m,\,\infty}(\mathbb{R}^d,\, d\mu_k)$ with norm $\leq C(C_K+A)$.
\end{thm}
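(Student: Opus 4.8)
The plan is to carry over the classical Calder\'on--Zygmund argument (as in \cite{GrafakosMCZT,LernerNMF}) to the space of homogeneous type $(\mathbb{R}^d,|x-y|,d\mu_k)$, being careful about the action of the reflection group $G$. By density it is enough to prove the bound for each $f_j$ bounded and compactly supported and then pass to the limit, and replacing $\mathcal{T}$ by $\mathcal{T}/(C_K+A)$ we may assume $C_K\le1$ and $A\le1$; the claim then reduces to $\|\mathcal{T}\overrightarrow{f}\|_{L^{1/m,\infty}(d\mu_k)}\le C\prod_{j=1}^m\|f_j\|_{L^1(d\mu_k)}$. Normalizing $\|f_j\|_{L^1(d\mu_k)}=1$, it suffices to prove $\mu_k(\{|\mathcal{T}\overrightarrow{f}|>\lambda\})\le C\lambda^{-1/m}$ for all $\lambda>0$, equivalently (setting $\lambda=\alpha^m$) that for every $\alpha>0$,
\[
\mu_k\big(\{x\in\mathbb{R}^d:|\mathcal{T}\overrightarrow{f}(x)|>\alpha^m\}\big)\le C\,\alpha^{-1}.
\]

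Fix $\alpha>0$ and apply to each $f_j$ the Calder\'on--Zygmund decomposition at height $\alpha$, available in any space of homogeneous type \cite{Coifmanbook}: $f_j=g_j+b_j$ with $b_j=\sum_i b_{j,i}$, the $b_{j,i}$ supported on pairwise disjoint balls $Q_{j,i}$, $\int_{\mathbb{R}^d}b_{j,i}\,d\mu_k=0$, $\|b_{j,i}\|_{L^1(d\mu_k)}\le C\alpha\,\mu_k(Q_{j,i})$, $\sum_i\mu_k(Q_{j,i})\le C\alpha^{-1}$, $\|g_j\|_\infty\le C\alpha$ and $\|g_j\|_{L^1(d\mu_k)}\le1$. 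Here is where the group enters: writing $Q^{*}$ for the ball concentric with a ball $Q$ of three times its radius, set $E=\bigcup_{j,i}\mathcal{O}(Q_{j,i}^{*})$, the union of the $G$-\emph{orbits} of the dilated balls. By (\ref{orbit volume compa}) and the $G$-invariance of $\mu_k$, $\mu_k(E)\le|G|\sum_{j,i}\mu_k(Q_{j,i}^{*})\le C\alpha^{-1}$, so it remains to estimate the level set on $\mathbb{R}^d\setminus E$. Expanding by multilinearity, $\mathcal{T}\overrightarrow{f}=\sum_{S\subseteq\{1,\dots,m\}}\mathcal{T}(h_1^{S},\dots,h_m^{S})$ with $h_j^{S}=b_j$ for $j\in S$ and $h_j^{S}=g_j$ otherwise.

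The term $S=\emptyset$ is controlled by the a priori hypothesis: since $\|g_j\|_{L^{q_j}(d\mu_k)}\le\|g_j\|_\infty^{1-1/q_j}\|g_j\|_{L^1(d\mu_k)}^{1/q_j}\le C\alpha^{1-1/q_j}$, we get $\|\mathcal{T}(g_1,\dots,g_m)\|_{L^{q,\infty}(d\mu_k)}\le A\prod_j\|g_j\|_{L^{q_j}(d\mu_k)}\le C\alpha^{m-1/q}$, and Chebyshev's inequality gives $\mu_k(\{|\mathcal{T}(g_1,\dots,g_m)|>2^{-m}\alpha^m\})\le C\alpha^{-1}$, using $A\le1$. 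For $S\neq\emptyset$, on $\mathbb{R}^d\setminus E$ the integral representation of Definition \ref{defn of multi Dunkl-Calderon _Zygmung oper} is available for $\mathcal{T}(h_1^{S},\dots,h_m^{S})$ — for $x\notin E$ the whole $G$-orbit of $x$ misses the supports of all the $b_{j,i}$ — after a routine approximation of the non-smooth pieces by $C_c^{\infty}$ functions. For such a term I would run the Grafakos--Torres scheme: in a given product, expand the $b_j$ ($j\in S$) into their pieces, let $Q$ be one of smallest radius among the corresponding balls and $j_0$ its index; use that the piece of $b_{j_0}$ supported on $Q$ has integral zero to replace the kernel $K$ by $K$ minus its value at the centre $c_Q$ of $Q$ in the $j_0$-th variable, and apply the smoothness estimate (\ref{smoothness estimate of kernel}) in that variable. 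Its hypothesis $|y_{j_0}-c_Q|\le\tfrac12\max_{1\le j\le m}d_G(x,y_j)$ holds because $x\notin\mathcal{O}(Q^{*})$ forces $d_G(x,y_{j_0})>2\,r(Q)$ — this is exactly why $E$ must be formed from $G$-orbits of \emph{dilated} balls. The remaining $m-1$ variables are integrated out against the size estimate (\ref{size estimate of kernel}); a good factor then contributes $\|g_j\|_\infty\le C\alpha$, each of the other bad factors contributes $\le C\alpha$ (summing the masses $\|b_{j,i_j}\|_{L^1(d\mu_k)}\le C\alpha\,\mu_k(Q_{j,i_j})$ over the disjoint balls against the kernel's decay), and each such integration removes one power of the volume factor in (\ref{size estimate of kernel}); finally the gain $(r(Q)/d_G(x,\cdot))^{\epsilon}$ is integrated in $x$ over $\mathbb{R}^d\setminus\mathcal{O}(Q^{*})$ by a dyadic decomposition in $d_G(x,c_Q)$, comparing the volumes of balls, of their $G$-orbits and of the annuli through (\ref{VOLRADREL}), (\ref{V(x,y,d(x,y) comparison}) and (\ref{orbit volume compa}). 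Collecting the estimates and summing over all choices of balls, using $\sum_i\mu_k(Q_{j,i})\le C\alpha^{-1}$ and $\|b_j\|_{L^1(d\mu_k)}\le C$, gives $\int_{\mathbb{R}^d\setminus E}|\mathcal{T}(h_1^{S},\dots,h_m^{S})|\,d\mu_k\le C\alpha^{m-1}$, whence Chebyshev yields $\mu_k(\{x\notin E:|\mathcal{T}(h_1^{S},\dots,h_m^{S})(x)|>2^{-m}\alpha^m\})\le C\alpha^{-1}$. Summing the $2^m$ contributions and undoing the normalizations proves the theorem.

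I expect the main obstacle to be precisely this bad-part analysis, and within it the bookkeeping dictated by the reflection group: verifying that the smoothness estimate (\ref{smoothness estimate of kernel}) genuinely applies outside $E$ — which is what forces $G$-orbits of \emph{dilated} balls and uses that the singular set of $K$ is $\mathcal{O}(\bigtriangleup_{m+1})$ rather than the plain diagonal — and carrying out the spatial integrations in the true, non-metric $d_G$-geometry, relying throughout on the volume comparabilities (\ref{VOLRADREL})--(\ref{orbit volume compa}) and on the $G$-invariance of $\mu_k$ to move between a ball and its $G$-orbit. The good-part term, by contrast, is immediate from the a priori boundedness together with the $L^{\infty}$--$L^{1}$ interpolation of the $g_j$.
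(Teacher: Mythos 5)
Your skeleton (Calder\'on--Zygmund decomposition of each $f_j$, exceptional set built from $G$-orbits of dilated balls, a priori $L^{q_1}\times\cdots\times L^{q_m}\to L^{q,\infty}$ bound for the all-good term, and smallest-ball cancellation via the smoothness estimate (\ref{smoothness estimate of kernel}) for the bad terms) coincides with the paper's, and your normalization $\mathcal{T}\mapsto\mathcal{T}/(C_K+A)$ plays the role of the paper's choice $\nu=1/(C_K+A)$. The genuine gap is in the terms containing $l\ge 2$ bad functions. You propose to prove $\int_{\mathbb{R}^d\setminus E}|\mathcal{T}(h_1^S,\dots,h_m^S)|\,d\mu_k\le C\alpha^{m-1}$ and then apply Chebyshev with exponent $1$, asserting that ``each of the other bad factors contributes $\le C\alpha$ by summing the masses over the disjoint balls against the kernel's decay.'' This step is not justified: using the cancellation of only the smallest ball yields a single gain $\bigl(r_{\min}/\max_s d_G(x,\cdot)\bigr)^{\epsilon}$, and that one factor must simultaneously make the $x$-integration converge at infinity \emph{and} make the sums over the balls of each of the $l-1$ remaining bad families converge; summing one family at a time ``against the kernel's decay'' silently consumes the whole $\epsilon$ and leaves nothing for the other families (configurations of balls accumulating at a point at all dyadic scales are exactly the critical case). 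Any correct implementation forces you to split the gain and the kernel's distance/volume factors among the $l$ families, e.g.\ via
\[
r_{j_1,n_1}\le\prod_{s=1}^{l}r_{j_s,n_s}^{1/l},\qquad
\max_{1\le s\le l}d_G(x,c_{j_s,n_s})\ge\prod_{s=1}^{l}d_G(x,c_{j_s,n_s})^{1/l},\qquad
\sum_{s=1}^{l}\mu_k\bigl(B(x,d_G(x,c_{j_s,n_s}))\bigr)\ge\prod_{s=1}^{l}\mu_k\bigl(B(x,d_G(x,c_{j_s,n_s}))\bigr)^{1/l},
\]
which is precisely the device you do not mention.

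The paper avoids your intermediate $L^1$ estimate altogether: after the geometric-mean splitting it bounds each bad-family factor pointwise by $\bigl[\sum_{\sigma\in G}M^k_{HL}(\chi_{B_{j_s,n_s}})(\sigma(x))\bigr]^{1+\epsilon/(l\,d_k)}$ (this is where the orbit $\mathcal{O}(B(x,d_G(x,c_{j_s,n_s})+r_{j_s,n_s}))$ and (\ref{orbit volume compa}) enter), then applies Chebyshev with exponent $1/l$ on $\mathbb{R}^d\setminus E$, H\"older's inequality across the $l$ factors, the $L^{1+\epsilon/(l\,d_k)}$-boundedness of $M^k_{HL}$ and the $G$-invariance of $\mu_k$, and finally $\sum_{n}\mu_k(B_{j_s,n})\lesssim(\alpha\nu)^{-1/m}$. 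For $l=1$ your accounting is essentially the classical linear argument and is fine; for $l\ge2$ you need either to adopt the paper's $L^{1/l}$-Chebyshev/geometric-mean/maximal-function scheme or to supply a genuinely careful summation over all $l$-tuples of balls (with the $\epsilon$ explicitly apportioned, say $\epsilon/(l-1)$ per family) before your claimed $L^1$ bound can stand; as written, the key estimate is asserted rather than proved.
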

\begin{proof}
 By density argument, it is enough to show the result for functions in $\mathcal{S}(\mathbb{R}^d).$ Take $f_1,f_2,\cdots,f_m \in \mathcal{S}(\mathbb{R}^d)$ and fix $\alpha>0.$ Define 
 $$E_\alpha=\big\{x\in \mathbb{R}^d:|\mathcal{T}(f_1,f_2,\cdots,f_m)(x)|>\alpha\big\}.$$
 Also, without loss of generality we can take $\|f_j\|_{L^{1}(d\mu_k)}=1$. It then suffices to prove that $\mu_k(E_\alpha)\leq C(C_K+A)^{1/m}\,{\alpha}^{-1/m}$.
\par Let $\nu>0$ be constant to be defined later. Applying Calder\'on-Zygmund decomposition to each of the functions $f_j$ at height $(\alpha\nu)^{1/m}$, we obtain $m$ number of good functions $g_j$, $m$ number of bad functions $b_j$ and $m$ families $I_j$ of balls $\big\{B_{j,\,n}:n\in I_j\big\}$ such that  $f_j=g_j+b_j$, $b_j=\sum\limits_{n\in I_j}b_{j,\,n}$ with the properties that for 
all $n\in I_j$ and $s\in[1,\infty),$
\begin{enumerate}[label=(\roman*)]
    \item $supp\, b_{j,\,n}\subseteq B_{j,\,n}$ and $\int_{\mathbb{R}^d}b_{j,\,n}(y)\,d\mu_k(y)=0;$
    \item $\|b_{j,\,n}\|_{L^{1}(d\mu_k)}\leq C (\alpha\nu)^{1/m}\,\mu_k(B_{j,\,n})$;
    \item $\sum\limits_{n\in I_j}\mu_k(B_{j,\,n})\leq C (\alpha\nu)^{-1/m}$;
    \item $\|g_j\|_{L^{\infty}}\leq C(\alpha\nu)^{1/m}$, $\|g_j\|_{L^{s}(d\mu_k)}\leq C(\alpha\nu)^{1/m(1-1/s)}$ and  $\|b_j\|_{L^{1}(d\mu_k)}\leq C$.
\end{enumerate}
Now let 
\begin{eqnarray*}
    E_1&=&\{x\in \mathbb{R}^d:|\mathcal{T}(g_1,g_2,\cdots,g_m)(x)|>\alpha/2^m\},\\
     E_2&=&\{x\in \mathbb{R}^d:|\mathcal{T}(b_1,g_2,\cdots,g_m)(x)|>\alpha/2^m\},\\
      E_3&=&\{x\in \mathbb{R}^d:|\mathcal{T}(g_1,b_2,\cdots,g_m)(x)|>\alpha/2^m\}\\
     \vdots\\
    \text{ and }   E_{2^m}&=&\{x\in \mathbb{R}^d:|\mathcal{T}(b_1,b_2,\cdots,b_m)(x)|>\alpha/2^m\}.
\end{eqnarray*}
Then $\mu_k\big(\big\{x\in \mathbb{R}^d:|\mathcal{T}(f_1,f_2,\cdots,f_m)(x)|>\alpha\big\}\big)\leq \sum\limits_{n=1}^{2^m}\mu_k(E_n)$. It is now enough to show that for all $n\in\{1,2,\cdots, 2^m\}$,
\begin{equation}\label{mu k E n less than}
  \mu_k(E_n)\leq C(C_K+A)^{1/m} \alpha^{-1/m}.
\end{equation}
Applying the given hypothesis on $\mathcal{T}$, we get 
\begin{eqnarray}\label{estimate for first set}
    \mu_k(E_1)&\leq& C (2^m A/\alpha)^q||g_1||^q_{L^{q_1}(d\mu_k)}||g_2||^q_{L^{q_2}(d\mu_k)}\cdots||g_m||^q_{L^{q_m}(d\mu_k)}\\
    &\leq& C\,A^q\alpha^{-q}(\alpha\nu)^{(q/m)(m-1/q)}\leq C\,A^q \alpha^{-1/m}\nu^{q-1/m}\nonumber.
\end{eqnarray}
Next, let us take $E_n$, where $2\leq n\leq 2^m.$ Consider the case where there are exactly $l$ bad functions appearing in $\mathcal{T}(h_1,h_2,\cdots,h_m)$, where $h_j$ is either $g_j$ or $b_j$ and also let $j_1,j_2,\cdots,j_l$ are the indices which corresponds to the bad functions. We will prove that 
\begin{equation}\label{for n greater 2 mu k En less than }
  \mu_k(E_n)\leq C \alpha^{-1/m}[\nu^{-1/m}+\nu^{-1/m}(\nu\,C_K)^{1/l}] .
\end{equation}
{\color{brown}Let} $r_{j,n}$ be the radius and $c_{j,n}$ be the centre of the ball $B_{j,n}$. Define $(B_{j,n})^{*}=B(c_{j,n},\,2r_{j,n})$ and $(B_{j,n})^{**}=B(c_{j,n},\,5r_{j,n})$. Now 
\begin{eqnarray*}
    \mu_k\big(\bigcup\limits_{j=1}^m\bigcup\limits_{n\in I_j}\mathcal{O}\big((B_{j,n})^{**}\big)\big)&\leq& \sum\limits_{j=1}^m\sum\limits_{n\in I_j}\mu_k\big(\mathcal{O}\big((B_{j,n})^{**}\big)\big)\\
    &\leq& |G|\sum\limits_{j=1}^m\sum\limits_{n\in I_j}\mu_k\big((B_{j,n})^{**}\big)\\
    &\leq& C\sum\limits_{j=1}^m\sum\limits_{n\in I_j}\mu_k\big(B_{j,n})\big)\leq C(\alpha\nu)^{-1/m}.
\end{eqnarray*}
Thus in view of the above inequality, to prove (\ref{for n greater 2 mu k En less than }), we only need to show that 
\begin{eqnarray}\label{mu k outside orbit}
    &&\mu_k\big(\big\{x\notin \bigcup\limits_{j=1}^m \bigcup\limits_{n\in I_j} \mathcal{O}\big((B_{j,n})^{**}\big): |T(h_1,h_2,\cdots,h_m)(x)|>\alpha/2^m\big\}\big)\\
    &\leq& C\, (\alpha\nu)^{-1/m}(C_K\,\nu)^{1/l}.\nonumber
\end{eqnarray}
Fix $x\notin \bigcup\limits_{j=1}^m \bigcup\limits_{n\in I_j} \mathcal{O}\big((B_{j,n})^{**}\big)$. Then 
\begin{eqnarray}\label{defn of Hn1,n2,..}
\ \ |T(h_1,h_2,\cdots,h_m)(x)|&\leq& \sum\limits_{n_1\in I_{j_1}}\cdots \sum\limits_{n_l\in I_{j_l}}\big|\int_{\big(\mathbb{R}^d\big)^m}K(x,y_1,y_2,\cdots,y_m)\\
    &&\times \prod\limits_{s\,\notin \{j_1,\cdots,j_l\}}g_s(y_s)\prod\limits_{s=1}^lb_{j_s,\,n_s}(y_{j_s}) \,d\mu_k(y_1)\cdots\,d\mu_k(y_m)\big|\nonumber\\
    &=:& \sum\limits_{n_1\in I_{j_1}}\cdots \sum\limits_{n_l\in I_{j_l}} H_{n_1,n_2,\cdots,n_l}.\nonumber
\end{eqnarray}
Let us fix balls $B_{j_1,n_1}, B_{j_2,n_2},\cdots,B_{j_l,n_l}$ and without loss of generality {\color{brown} take}
$$r_{j_1,n_1}=\min\limits_{1\leq s\leq l}r_{j_s,n_s}.$$

Then using the smoothness condition (\ref{smoothness estimate of kernel}), we have
\begin{eqnarray}\label{one time use applying smoothness condtn}
&&\ \big|\int_{(B_{j_1,n_1})^{*}}K(x,y_1,y_2,\cdots,y_m)\,b_{j_1,n_1}(y_{j_1})\,d\mu_k(y_{j_1})\big|\\
&=& \big|\int_{(B_{j_1,n_1})^{*}}[K(x,y_1,\cdots,y_{j_1},\cdots,y_m)-K(x,y_1,\cdots,c_{j_1,n_1},\cdots,y_m)]\nonumber\\
&&\times b_{j_1,n_1}(y_{j_1})\,d\mu_k(y_{j_1})\big|\nonumber\\
&\leq& C_K\, \int_{(B_{j_1,n_1})^{*}}  \big[\sum\limits_{n=1}^m \mu_k\big(B(x,d_G(x,y_n))\big)\big]^{-m} 
     \Big[\frac{|y_{j_1}-c_{j_1,n_1}|}{\max\limits_{1\leq n\leq m} |x-y_n|}\Big]^{\epsilon}
 \,|b_{j_1,n_1}(y_{j_1})|\,d\mu_k(y_{j_1})\nonumber.
\end{eqnarray}
Before proceeding further, we need the following lemma.  
\begin{lem}\label{one time use lemma volume comparison}
For any $\epsilon>0$ and $N\in \mathbb{N}$ there exists a constant $C>0$ such that for any $s\in\{1,2,\cdots,N\}$ and $x,y_n\in \mathbb{R}^d$,
$$\int_{\mathbb{R}^d}  \big[\sum\limits_{n=1}^N \mu_k\big(B(x,d_G(x,y_n))\big)\big]^{-1} 
     [{\max\limits_{1\leq n\leq N} d_G(x,y_n)}]^{-\epsilon}\,d\mu_k(y_s)\leq C\,[{\max\limits_{\underset{n\neq s}{1\leq n\leq N} }d_G(x,y_n)}]^{-\epsilon}.$$
     \end{lem}
\begin{proof}
Take $t=\max\limits_{\underset{n\neq s}{1\leq n\leq N} }d_G(x,y_n)$.\\
\underline{Estimate for $d_G(x,y_s)<2t$.}\\
\par In this case using (\ref{VOLRADREL}), it is not hard to see that 
$$\sum\limits_{n=1}^N \mu_k\big(B(x,d_G(x,y_n))\big)\sim\mu_k(B(x,t)).$$
Then from (\ref{orbit volume compa}),
\begin{eqnarray*}
 &&\int_{d_G(x,y_s)<2t}  \big[\sum\limits_{n=1}^N \mu_k\big(B(x,d_G(x,y_n))\big)\big]^{-1} 
     [{\max\limits_{1\leq n\leq N} d_G(x,y_n)}]^{-\epsilon}\,d\mu_k(y_s)\\
     &\leq & C\,  [{\max\limits_{\underset{n\neq s}{1\leq n\leq N} }d_G(x,y_n)}]^{-\epsilon}\int_{d_G(x,y_s)<2t}\frac{1}{\mu_k(B(x,t))} \,d\mu_k(y_s)\\
     &\leq & C\,  [{\max\limits_{\underset{n\neq s}{1\leq n\leq N} }d_G(x,y_n)}]^{-\epsilon}.
\end{eqnarray*}
\underline{Estimate for $d_G(x,y_s)\geq2t$.}\\
\par In this case we have 
$$\sum\limits_{n=1}^N \mu_k\big(B(x,d_G(x,y_n))\big)\sim\mu_k(B(x,d_G(x, y_s))).$$
Hence, similarly applying (\ref{orbit volume compa}) again,
\begin{eqnarray*}
 &&\int_{d_G(x,y_s)\geq 2t}  \big[\sum\limits_{n=1}^N \mu_k\big(B(x,d_G(x,y_n))\big)\big]^{-1} 
     [{\max\limits_{1\leq n\leq N} d_G(x,y_n)}]^{-\epsilon}\,d\mu_k(y_s)\\
     &\leq & C\, \int_{d_G(x,y_s)\geq 2t}  \frac{1}{\mu_k\big(B(x,d_G(x,y_s))\big)} 
     [ d_G(x,y_s)]^{-\epsilon}\,d\mu_k(y_s) \\
     &\leq&C\,\sum\limits_{r=1}^{\infty}\int\limits_{2^rt\leq d_G(x,y_s)<2^{r+1}t}  \frac{1}{\mu_k\big(B(x,d_G(x,y_s))\big)} 
     [ d_G(x,y_s)]^{-\epsilon}\,d\mu_k(y_s) \\
     &\leq & C\,\sum\limits_{r=1}^{\infty}\frac{1}{(2^rt)^{\epsilon}}\frac{|G|\,\mu_k(B(x,2^rt))}{\mu_k(B(x,2^rt))}\leq C\, t^{-\epsilon}
     = C\, [{\max\limits_{\underset{n\neq s}{1\leq n\leq N} }d_G(x,y_n)}]^{-\epsilon}.
\end{eqnarray*}
This completes the proof of the lemma.
\end{proof}
Returning to the proof of Theorem \ref{1,1...1 to 1/m bddness}, taking integration on both sides of (\ref{one time use applying smoothness condtn}) with respect to $y_s\in \{1,2,\cdots,m\}\setminus \{j_1,j_2,\cdots,j_l\}$ and using Lemma \ref{one time use lemma volume comparison} $(m-l)$ times, we get
\begin{eqnarray}\label{before discussion ineq}
 &&\int_{(\mathbb{R}^{d})^{m-l}}\big|\int_{(B_{j_1,n_1})^{*}}K(x,y_1,y_2,\cdots,y_m)\,b_{j_1,n_1}(y_{j_1})\,d\mu_k(y_{j_1})\big| \prod\limits_{s\,\notin \{j_1,j_2,\cdots,j_l\}} d\mu_k(y_s)\\
 &\leq& C_K \, \int_{(B_{j_1,n_1})^{*}}|b_{j_1,n_1}(y_{j_1})|\,\Big[\int_{(\mathbb{R}^{d})^{m-l}} \big[\sum\limits_{n=1}^m \mu_k\big(B(x,d_G(x,y_n))\big)\big]^{-m} 
     \Big[\frac{|y_{j_1}-c_{j_1,n_1}|}{\max\limits_{1\leq n\leq m} d_G(x,y_n)}\Big]^{\epsilon}\nonumber\\
     &&\times \prod\limits_{s\notin\,\{j_1,j_2,\cdots,j_l\} }\,d\mu_k(y_s) \Big]\,d\mu_k(y_{j_1})\nonumber\\
     &\leq& CC_K \, \int_{(B_{j_1,n_1})^{*}}|b_{j_1,n_1}(y_{j_1})|
     \big[\sum\limits_{s=1}^l \mu_k\big(B(x,d_G(x,y_{j_s}))\big)\big]^{-l}
     |y_{j_1}-c_{j_1,n_1}|^{\epsilon}\nonumber\\
     &&\times [{\max\limits_{1\leq s\leq l} d_G(x,y_{j_s})}]^{-\epsilon}\,d\mu_k(y_{j_1}).\nonumber\\
     &\leq& CC_K \,\int_{(B_{j_1,n_1})^{*}}\Big[\frac{r_{j_1,n_1}}{\max\limits_{1\leq s\leq l}d_G(x,y_{j_s})}\Big]^{\epsilon}\big[\sum\limits_{s=1}^l\mu_k\big(B(x,d_G(x,y_{j_s}))\big)\big]^{-l} \,|b_{j_1,n_1}(y_{j_1})| \,d\mu_k(y_{j_1}).\nonumber
\end{eqnarray}
Now $y_{j_s}\in B_{j_s,n_s}$ and  $x\notin \bigcup\limits_{j=1}^m \bigcup\limits_{n\in I_j} \mathcal{O}\big((B_{j,n})^{**}\big)$ together implies 
$$d_G(x,y_{j_s})\sim d_G(x,c_{j_s, n_s})\text{ and hence }\mu_k\big(B(x,d_G(x,y_{j_s}))\big)\sim \mu_k\big(B(x,d_G(x,c_{j_s, n_s}))\big).$$
Also the minimality of $r_{j_1,n_1}$ implies 
$$r_{j_1,n_1}\leq \prod\limits_{s=1}^l(r_{j_s,n_s})^{1/l}.$$
Similarly 
$$\max\limits_{1\leq s\leq l}d_G(x,c_{j_s,n_s})\geq \prod\limits_{s=1}^l[d_G(x,c_{j_s,n_s})]^{1/l}$$ 
$$ \text{ and }\sum\limits_{s=1}^l \mu_k\big(B(x,d_G(x,c_{j_s,n_s}))\big)\geq \prod\limits_{s=1}^l[\mu_k\big(B(x,d_G(x,c_{j_s,n_s}))\big)]^{1/l}.$$
Now taking the above discussions in account, from (\ref{before discussion ineq}) we can write
\begin{eqnarray*}
     &&\int_{(\mathbb{R}^{d})^{m-l}}\big|\int_{(B_{j_1,n_1})^{*}}K(x,y_1,y_2,\cdots,y_m)\,b_{j_1,n_1}(y_{j_1})\,d\mu_k(y_{j_1})\big| \prod\limits_{s\,\notin \{j_1,j_2,\cdots,j_l\}} d\mu_k(y_s)\\
     &\leq& CC_K \,\Big[\frac{r_{j_1,n_1}}{\max\limits_{1\leq s\leq l}d_G(x,c_{j_s,n_s})}\Big]^{\epsilon}\big[\sum\limits_{s=1}^l\mu_k\big(B(x,d_G(x,c_{j_s,n_s}))\big)\big]^{-l} \,\|b_{j_1,n_1}\|_{L^1(d\mu_k)}\\
      &\leq& CC_K \,\|b_{j_1,n_1}\|_{L^1(d\mu_k)}\prod\limits_{s=1}^l\Big[\frac{r_{j_s,n_s}}{d_G(x,c_{j_s,n_s})}\Big]^{\epsilon/l}\frac{1}{\mu_k\big(B(x,d_G(x,c_{j_s,n_s}))\big)}.
\end{eqnarray*}
So using properties of Calder\'on-Zygmund decomposition, from (\ref{defn of Hn1,n2,..}) we write
\begin{eqnarray*}
     H_{n_1,n_2,\cdots,n_l}
     &\leq& \int_{\big(\mathbb{R}^d\big)^{m-1}}\big|\int_{(B_{j_1,n_1})^{*}}K(x,y_1,y_2,\cdots,y_m)\,b_{j_1,n_1}\,d\mu_k(y_{j_1})\big|\\
     &&\times \prod\limits_{s\,\notin \{j_1,\cdots,j_l\}}|g_s(y_s)|\,d\mu_k(y_s)\prod\limits_{s=2}^l\,|b_{j_s,\,n_s}(y_{j_s})|\,d\mu_k(y_{j_s})\\
    &\leq& CC_K\, (\alpha\nu)^{(m-l)/m}\,\|b_{j_1,n_1}\|_{L^1(d\mu_k)}\int_{\big(\mathbb{R}^d\big)^{l-1}}\prod\limits_{j=2}^l\,|b_{j_s,n_s}(y_{j_s})|\,d\mu_k(y_{j_s})  \\
    &&\times\prod\limits_{s=1}^l\Big[\frac{r_{j_1,n_1}}{d_G(x,c_{j_s,n_s})}\Big]^{\epsilon/l}\frac{1}{\mu_k\big(B(x,d_G(x,c_{j_s,n_s}))\big)}\\
    &\leq& CC_K\, (\alpha\nu)^{(m-l)/m}\,\prod\limits_{s=1}^l\,\Big[\frac{r_{j_s,n_s}}{d_G(x,c_{j_s,n_s})}\Big]^{\epsilon/l}\frac{\|b_{j_s,n_s}\|_{L^1(d\mu_k)}}{\mu_k\big(B(x,d_G(x,c_{j_s,n_s}))\big)} \\
    &\leq& CC_K\, \alpha\nu\,\prod\limits_{s=1}^l\,\Big[\frac{r_{j_s,n_s}}{d_G(x,c_{j_s,n_s})}\Big]^{\epsilon/l}\frac{\mu_k(B_{j_s,n_s})}{\mu_k\big(B(x,d_G(x,c_{j_s,n_s}))\big)}.\\
\end{eqnarray*}
Thus for any $x\notin \bigcup\limits_{j=1}^m \bigcup\limits_{n\in I_j} \mathcal{O}\big((B_{j,n})^{**}\big)$, substituting the above inequality in (\ref{defn of Hn1,n2,..}), we have
\begin{eqnarray}\label{one time use before maximal function}
   && |T(h_1,h_2,\cdots,h_m)(x)|\\
   &\leq& CC_K\, \alpha\nu\,\sum\limits_{n_1\in I_{j_1}}\cdots \sum\limits_{n_l\in I_{j_l}}\prod\limits_{s=1}^l\,\Big[\frac{r_{j_s,n_s}}{d_G(x,c_{j_s,n_s})}\Big]^{\epsilon/l}\frac{\mu_k(B_{j_s,n_s})}{\mu_k\big(B(x,d_G(x,c_{j_s,n_s}))\big)}\nonumber\\
    &\leq& CC_K\, \alpha\nu\,\prod\limits_{s=1}^l\,\Bigg[\sum\limits_{n_s\in I_{j_s}}\Big[\frac{r_{j_s,n_s}}{d_G(x,c_{j_s,n_s})}\Big]^{\epsilon/l}\frac{\mu_k(B_{j_s,n_s})}{\mu_k\big(B(x,d_G(x,c_{j_s,n_s}))\big)}\Bigg].\nonumber
\end{eqnarray}
Now using the facts that $d_G(x,c_{j_s,n_s})\sim d_G(x,c_{j_s,n_s})+r_{j_s,n_s}$, $\mu_k\big(B(x,d_G(x,c_{j_s,n_s}))\big)\sim \mu_k\big(B(x,d_G(x,c_{j_s,n_s})+r_{j_s,n_s})\big)$ and the condition (\ref{VOLRADREL}), we get 
\begin{eqnarray}\label{radius voulume less than maximal funcn}
   &&\Big[\frac{r_{j_s,n_s}}{d_G(x,c_{j_s,n_s})}\Big]^{\epsilon/l}\frac{\mu_k(B_{j_s,n_s})}{\mu_k\big(B(x,d_G(x,c_{j_s,n_s}))\big)}\\
   &\leq&C\,\Big[\frac{\mu_k\big(B_{j_s,n_s}\big)}{\mu_k\big(B(x,d_G(x,c_{j_s,n_s})+r_{j_s,n_s})\big)}\Big]^{\epsilon/(l\,d_k)}\frac{\mu_k(B_{j_s,n_s})}{\mu_k\big(B(x,d_G(x,c_{j_s,n_s})+r_{j_s,n_s})\big)}\nonumber\\
   &\leq& C\,\Bigg[ \frac{1}{\mu_k\big(B(x,d_G(x,c_{j_s,n_s})+r_{j_s,n_s})\big)} \int_{\mathbb{R}^d}\chi_{B_{j_s,n_s}}(y)\,d\mu_k(y)
 \Bigg]^{1+\epsilon/(l\,d_k)}\nonumber\\
 &\leq& C\,\Bigg[ \frac{1}{\mu_k\big(B(x,\,d_G(x,c_{j_s,n_s})+r_{j_s,n_s})\big)}\nonumber\\
 &&\times\int\limits_{\mathcal{O}\big(B(x,\,d_G(x,c_{j_s,n_s})+r_{j_s,n_s})\big)}\chi_{B_{j_s,n_s}}(y)\,d\mu_k(y)
 \Bigg]^{1+\epsilon/(l\,d_k)}\nonumber\\
 &\leq& C\,\Bigg[ \sum\limits_{\sigma\in G}\frac{1}{\mu_k\big(B(\sigma(x),d_G(x,\,c_{j_s,n_s})+r_{j_s,n_s})\big)}\nonumber\\
 &&\times\int\limits_{B(\sigma(x),\,d_G(x,c_{j_s,n_s})+r_{j_s,n_s})}\chi_{B_{j_s,n_s}}(y)\,d\mu_k(y)
 \Bigg]^{1+\epsilon/(l\,d_k)}\nonumber\\
 &\leq& C\,\Big[\sum\limits_{\sigma\in G} M^k_{HL}\big(\chi_{B_{j_s,n_s}}\big)(\sigma(x))\Big]^{1+\epsilon/(l\,d_k)}\nonumber.
\end{eqnarray}
Finally, using Chebyshev's inequality, (\ref{one time use before maximal function}), (\ref{radius voulume less than maximal funcn}), H\"older's inequality, $L^{1+\epsilon/(l\,d_k)}$ boundedness of $M^k_{HL}$ and properties of Calder\'on-Zygmund decomposition, we obtain
\begin{eqnarray*}
    &&\mu_k\big(\big\{x\notin \bigcup\limits_{j=1}^m \bigcup\limits_{n\in I_j} \mathcal{O}\big((B_{j,n})^{**}\big): |T(h_1,h_2,\cdots,h_m)(x)|>\alpha/2^m\big\}\big)\\
    &\leq& C\,\alpha^{-1/l}\int\limits_{\mathbb{R}^d\setminus\bigcup\limits_{j=1}^m \bigcup\limits_{n\in I_j} \mathcal{O}\big((B_{j,n})^{**} } |T(h_1,h_2,\cdots,h_m)(x)|^{1/l}\,d\mu_k(x)\\
    &\leq& C\,(C_K\nu)^{1/l}\int_{\mathbb{R}^d}\Bigg[ \prod\limits_{s=1}^l\,\sum\limits_{n_s\in I_{j_s}}\Big[\sum\limits_{\sigma\in G} M^k_{HL}\big(\chi_{B_{j_s,n_s}}\big)(\sigma(x))\Big]^{1+\epsilon/(l\,d_k)} \Bigg]^{1/l}d\mu_k(x)\\
    &\leq& C\,(C_K\nu)^{1/l}\prod\limits_{s=1}^l\,\Bigg[\int_{\mathbb{R}^d} \sum\limits_{n_s\in I_{j_s}}\Big[\sum\limits_{\sigma\in G} M^k_{HL}\big(\chi_{B_{j_s,n_s}}\big)(\sigma(x))\Big]^{1+\epsilon/(l\,d_k)} d\mu_k(x)\Bigg]^{1/l}\\
    &\leq& C\,(C_K\nu)^{1/l}\prod\limits_{s=1}^l\,\Bigg[ \sum\limits_{n_s\in I_{j_s}}\sum\limits_{\sigma\in G}\int_{\mathbb{R}^d}\Big[ M^k_{HL}\big(\chi_{B_{j_s,n_s}}\big)(\sigma(x))\Big]^{1+\epsilon/(l\,d_k)} d\mu_k(x)\Bigg]^{1/l}\\
     &=& C\,(C_K\nu)^{1/l}\prod\limits_{s=1}^l\,\Bigg[ \sum\limits_{n_s\in I_{j_s}}\sum\limits_{\sigma\in G}\int_{\mathbb{R}^d}\Big[ M^k_{HL}\big(\chi_{B_{j_s,n_s}}\big)(x)\Big]^{1+\epsilon/(l\,d_k)} d\mu_k(x)\Bigg]^{1/l}\\
     &\leq& C\,(C_K\nu)^{1/l}\prod\limits_{s=1}^l\,\Big[ \sum\limits_{n_s\in I_{j_s}}|G|\,\mu_k\big(B_{j_s,n_s}\big)\Big]^{1/l}\leq C\,(C_K\nu)^{1/l}(\alpha\nu)^{-1/m}.
    \end{eqnarray*}
This completes the proof of the inequality (\ref{mu k outside orbit}). Finally choosing $\nu=1/(C_K+A)$, from (\ref{estimate for first set}) and  (\ref{for n greater 2 mu k En less than }), we see that (\ref{mu k E n less than}) holds and hence the proof is concluded.
\end{proof}
 We next prove two propositions which will be very useful in the proofs of Theorem \ref{two weight calderon Zygmund} and Theorem \ref{one weight calderon Zygmund}.
\begin{prop}\label{prop sharp maxi funct domi by multi maxi}
   Let $1\leq p_1,p_2,\cdots,p_m<\infty$ and $\nu\in(0,1/m)$. Then there is a constant $C>0$ depending only on $\nu, m, \epsilon$ and $p_j$'s such that for all  $\overrightarrow {f} \in L^{p_1}(\mathbb{R}^d,\,d\mu_k)\times L^{p_2}(\mathbb{R}^d,\,d\mu_k)\times\cdots \times L^{p_m}(\mathbb{R}^d,\, d\mu_k)$,
   \begin{equation*}
      M^{k,\,\#}_{HL,\,\nu} \big(\mathcal{T}\overrightarrow {f}\big)(x)\leq C\, (C_K+A)\sum\limits_{\underset{\sigma _{n_s}\in G}{(n_1,n_2,\cdots,n_m)}} \mathcal{M}^k_{HL} \left(f_1\circ \sigma_{n_1}, f_2\circ \sigma_{n_2},\cdots,f_m\circ \sigma_{n_m}\right)(x).
   \end{equation*}
\end{prop}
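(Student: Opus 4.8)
The plan is to adapt the scheme of \cite{LernerNMF} (see also \cite{GrafkosMAMS}), inserting the $G$-orbit bookkeeping forced by Definition~\ref{defn of multi Dunkl-Calderon _Zygmung oper}. By a standard density argument it is enough to take $f_1,\dots,f_m\in C_c^\infty(\mathbb{R}^d)$. Fix $x\in\mathbb{R}^d$ and an arbitrary ball $B\ni x$ with centre $c_B$ and radius $r(B)$; put $B^*=B(c_B,2r(B))$ and split each $f_j=f_j^0+f_j^\infty$ with $f_j^0=f_j\chi_{\mathcal{O}(B^*)}$. Expanding $\mathcal{T}(\overrightarrow f)=\sum_{\vec\alpha\in\{0,\infty\}^m}\mathcal{T}(f_1^{\alpha_1},\dots,f_m^{\alpha_m})$, isolate the term $\vec\alpha=(0,\dots,0)$ and set $c=\sum_{\vec\alpha\neq(0,\dots,0)}\mathcal{T}(f_1^{\alpha_1},\dots,f_m^{\alpha_m})(c_B)$. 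Since $0<\nu<1/m\le1$, one has $\big||\mathcal{T}\overrightarrow f|^\nu-|c|^\nu\big|\le|\mathcal{T}\overrightarrow f-c|^\nu$ and $t\mapsto t^\nu$ is subadditive; so, by the equivalent expression for $M^{k,\,\#}_{HL,\,\nu}$ recorded in Section~\ref{homogeneous space section}, it suffices to bound, uniformly over $B\ni x$: \emph{(i)} $\big(\mu_k(B)^{-1}\int_B|\mathcal{T}(f_1^0,\dots,f_m^0)|^\nu\,d\mu_k\big)^{1/\nu}$, and \emph{(ii)} for each $\vec\alpha\neq(0,\dots,0)$ and each $z\in B$, the oscillation $|\mathcal{T}(f_1^{\alpha_1},\dots,f_m^{\alpha_m})(z)-\mathcal{T}(f_1^{\alpha_1},\dots,f_m^{\alpha_m})(c_B)|$, each by $C(C_K+A)\sum_{(n_1,\dots,n_m)}\mathcal{M}^k_{HL}(f_1\circ\sigma_{n_1},\dots,f_m\circ\sigma_{n_m})(x)$.

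For \emph{(i)}: each $f_j^0\in L^1(d\mu_k)$ is supported in $\mathcal{O}(B^*)$, so Theorem~\ref{1,1...1 to 1/m bddness} gives $\|\mathcal{T}(f_1^0,\dots,f_m^0)\|_{L^{1/m,\infty}(d\mu_k)}\le C(C_K+A)\prod_j\|f_j^0\|_{L^1(d\mu_k)}$. Since $\nu<1/m$, Kolmogorov's inequality on $B$ upgrades this to $\big(\mu_k(B)^{-1}\int_B|\mathcal{T}(f_1^0,\dots,f_m^0)|^\nu\,d\mu_k\big)^{1/\nu}\le C(C_K+A)\prod_{j=1}^m\mu_k(B)^{-1}\int_{\mathcal{O}(B^*)}|f_j|\,d\mu_k$. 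Using $\mu_k(B)\sim\mu_k(B^*)$, the identity $\int_{\mathcal{O}(B^*)}|f_j|\,d\mu_k\le\sum_{\sigma\in G}\int_{B^*}|f_j\circ\sigma|\,d\mu_k$ (from $\mathcal{O}(B^*)=\bigcup_\sigma\sigma(B^*)$ and $G$-invariance of $d\mu_k$), and expanding the product over $G^m$, each resulting summand is $\le\mathcal{M}^k_{HL}(f_1\circ\sigma_{n_1},\dots,f_m\circ\sigma_{n_m})(x)$ because the single ball $B^*$ contains $x$; this proves \emph{(i)}.

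For \emph{(ii)}: write $J=\{j:\alpha_j=\infty\}\neq\emptyset$. Every $z\in B$ has $d_G(c_B,z)\le|c_B-z|<r(B)$, so $Gz\subseteq\mathcal{O}(B^*)$, whereas $\bigcap_j\operatorname{supp}f_j^{\alpha_j}\subseteq\operatorname{supp}f_{j_0}^\infty\subseteq\mathbb{R}^d\setminus\mathcal{O}(B^*)$ for $j_0\in J$; hence the kernel representation of Definition~\ref{defn of multi Dunkl-Calderon _Zygmung oper} is valid at $z$ and at $c_B$, and $(c_B,\overrightarrow y)$, $(z,\overrightarrow y)$ avoid $\mathcal{O}(\bigtriangleup_{m+1})$ on the relevant domain. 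Since $|z-c_B|<r(B)<\tfrac12\max_{j\in J}d_G(c_B,y_j)$, the case $n=0$ of the smoothness estimate (\ref{smoothness estimate of kernel}) applies, and with $|c_B-y_j|\ge d_G(c_B,y_j)$ it yields
$$|\mathcal{T}(f_1^{\alpha_1},\dots,f_m^{\alpha_m})(z)-\mathcal{T}(f_1^{\alpha_1},\dots,f_m^{\alpha_m})(c_B)|\le C\,C_K\int_{(\mathbb{R}^d)^m}\Big[\frac{r(B)}{\max_{1\le j\le m}d_G(c_B,y_j)}\Big]^{\epsilon}\Big[\sum_{j=1}^m\mu_k\big(B(c_B,d_G(c_B,y_j))\big)\Big]^{-m}\prod_{j=1}^m|f_j^{\alpha_j}(y_j)|\,d\mu_k(y_j).$$
Decompose $\mathbb{R}^d\setminus\mathcal{O}(B^*)$ into the orbit annuli $\{2^kr(B^*)\le d_G(c_B,\cdot)<2^{k+1}r(B^*)\}$, $k\ge0$, and split the integration region according to the largest index $k$ of an annulus containing some $y_j$ with $j\in J$. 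On that piece $\sum_j\mu_k\big(B(c_B,d_G(c_B,y_j))\big)\gtrsim\mu_k\big(B(c_B,2^kr(B^*))\big)=:V_k$ (the sum dominates the term from that $j$, and $r\mapsto\mu_k(B(c_B,r))$ is increasing), the $\epsilon$-factor is $\lesssim2^{-k\epsilon}$, and \emph{every} $y_j$ — the $f_j^\infty$'s by construction, the $f_j^0$'s because $\mathcal{O}(B^*)\subseteq\mathcal{O}(2^{k+1}B^*)$ — lies in the dilated orbit $\mathcal{O}(2^{k+1}B^*)$. Thus the integral is $\le C\,C_K\sum_{k\ge0}2^{-k\epsilon}V_k^{-m}\prod_{j=1}^m\int_{\mathcal{O}(2^{k+1}B^*)}|f_j|\,d\mu_k$. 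Using $V_k\sim\mu_k(2^{k+1}B^*)$, then $\int_{\mathcal{O}(2^{k+1}B^*)}|f_j|\le\sum_{\sigma\in G}\int_{2^{k+1}B^*}|f_j\circ\sigma|\,d\mu_k$, and — the decisive point — the fact that the \emph{single} ball $2^{k+1}B^*$ (which contains $x$) serves all $m$ slots simultaneously, one expands over $G^m$ to bound $V_k^{-m}\prod_j\int_{\mathcal{O}(2^{k+1}B^*)}|f_j|$ by $C\sum_{(n_1,\dots,n_m)}\mathcal{M}^k_{HL}(f_1\circ\sigma_{n_1},\dots,f_m\circ\sigma_{n_m})(x)$ for every $k$; summing $\sum_k2^{-k\epsilon}<\infty$ gives \emph{(ii)}, and combining \emph{(i)}–\emph{(ii)} through subadditivity of $t\mapsto t^\nu$ proves the Proposition.

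The main obstacle is entirely Dunkl-specific. One must systematically replace Euclidean balls by orbit balls $\mathcal{O}(\cdot)$; confirm that the kernel representation is licit at the evaluation points, which is exactly where $Gz\subseteq\mathcal{O}(B^*)$ is used, forcing $B^*$ to be a genuine enlargement of $B$; and — most delicately — organize the annular decomposition in \emph{(ii)} so that one and the same orbit ball $2^{k+1}B^*$ is used for all $m$ factors. It is precisely this simultaneity that produces the multilinear maximal operator $\mathcal{M}^k_{HL}$ on the right (rather than a product of $M^k_{HL}$'s), while the reflection-group change of variables $\int_{\mathcal{O}(Q)}|f|\,d\mu_k=\sum_{\sigma\in G}\int_Q|f\circ\sigma|\,d\mu_k$ is what forces the sum over $G^m$. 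Verifying that $\sum_j\mu_k\big(B(c_B,d_G(c_B,y_j))\big)$ is comparable to $V_k$ on the correct region, using only the ``sum of volumes'' shape of (\ref{size estimate of kernel})–(\ref{smoothness estimate of kernel}), is the technical heart of the estimate.
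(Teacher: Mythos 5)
Your proposal is correct and follows essentially the same route as the paper's proof: the same splitting $f_j=f_j^0+f_j^\infty$ relative to the orbit of an enlarged ball, the weak-type $L^1\times\cdots\times L^1\to L^{1/m,\infty}$ bound (Theorem \ref{1,1...1 to 1/m bddness}) with a Kolmogorov-type argument for the all-local term, and the smoothness estimate (\ref{smoothness estimate of kernel}) in the first slot combined with an orbit-annulus decomposition and the $G$-invariance change of variables for the remaining terms. The only differences (comparing with the value at $c_B$ instead of at $x$, dilation factor $2$ instead of $5$, dyadic instead of factor-$3$ annuli) are immaterial.
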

\begin{proof}
Fix a ball $B$ such that $x\in B$. From the definition given in Section \ref{homogeneous space section}, it suffices to prove that there is a $\mathfrak{c}_B \in\mathbb{C}$ depending only on $B$ such that 
\begin{eqnarray}\label{esti of sharp maxi domi by multi maxi}
    &&\Big[\frac{1}{ \mu_k(B)} \int_B\big|\,\mathcal{T}\overrightarrow {f}(z)-\mathfrak{c}_B\big|^{\nu}\,d\mu_k(z)\Big]^{1/\nu}\\
    &\leq& C\, (C_K+A)\sum\limits_{\underset{\sigma _{n_s}\in G}{(n_1,n_2,\cdots,n_m)}} \mathcal{M}^k_{HL} \left(f_1\circ \sigma_{n_1}, f_2\circ \sigma_{n_2},\cdots,f_m\circ \sigma_{n_m}\right)(x).\nonumber
\end{eqnarray}
  Let $c_B$ denotes the centre and $r(B)$ denote the radius of the ball $B$ and set $B^{**}=\big(B(c_B,5r(B))\big)$. Also define $f_j^0=f_j\chi_{\mathcal{O}(B^{**})}$ and $f_j^\infty=f_j-f_j^0$. Then 
  \begin{eqnarray*}
    \prod\limits_{j=1}^m f_j(y_j)&=& \prod\limits_{j=1}^m\,\big[f_j^0(y_j)+f_j^\infty(y_j)\big]\\
    &=&\sum\limits_{\alpha_1,\alpha_2,\cdots,\alpha_m\in\,\{0,\infty\}}f_1^{\alpha_1}(y_1)f_2^{\alpha_2}(y_2)\cdots f_m^{\alpha_m}(y_m)\\
     &=&\prod\limits_{j=1}^m f_j^0(y_j) + \sum\limits_{\text{ at lest one }\alpha_n\neq\,0}f_1^{\alpha_1}(y_1)f_2^{\alpha_2}(y_2)\cdots f_m^{\alpha_m}(y_m)\\
  \end{eqnarray*}
Denote $(f_1^0,f_2^0,\cdots,f_m^0)$ by $\overrightarrow{f^0}$, then we have
\begin{equation}\label{dividing T into two parts}
\mathcal{T}\overrightarrow{f}(z)=\mathcal{T}\overrightarrow{f^0}(z)+\sum\limits_{\text{ at lest one }\alpha_n\neq\,0}\mathcal{T}\big(f_1^{\alpha_1},f_2^{\alpha_2},\cdots, f_m^{\alpha_m}\big)(z).
\end{equation}
$$ \text{Set }N=\prod\limits_{j=1}^m\frac{1}{\mu_k(B)}\|f^0_j\|_{L^1(d\mu_k)}\text{ and }\mathfrak{c}_B=\sum\limits_{\text{ at lest one }\alpha_n\neq\,0}\mathcal{T}\big(f_1^{\alpha_1},f_2^{\alpha_2},\cdots, f_m^{\alpha_m}\big)(x).$$
Then from (\ref{dividing T into two parts}), we can write
\begin{eqnarray}\label{2nd time diving T into two parts}
    &&\Big[\frac{1}{ \mu_k(B)} \int_B\big|\,\mathcal{T}\overrightarrow {f}(z)-\mathfrak{c}_B\big|^{\nu}\,d\mu_k(z)\Big]^{1/\nu}\\
    &\leq&C\,\Big[\frac{1}{ \mu_k(B)} \int_B\big|\,\mathcal{T}\overrightarrow {f^0}(z)\big|^{\nu}\,d\mu_k(z)\Big]^{1/\nu}
     +C\,\Big[\frac{1}{ \mu_k(B)} \int_B \sum\limits_{\text{ at lest one }\alpha_n\neq\,0}\nonumber\\
     &&\times\big|\,\mathcal{T}\big(f_1^{\alpha_1},f_2^{\alpha_2},\cdots, f_m^{\alpha_m}\big)(z)-\mathcal{T}\big(f_1^{\alpha_1},f_2^{\alpha_2},\cdots, f_m^{\alpha_m}\big)(x)\big|^{\nu}\,d\mu_k(z)\Big]^{1/\nu}\nonumber.
\end{eqnarray}
Now we consider the first term. It follows from Theorem \ref{1,1...1 to 1/m bddness} that
\begin{eqnarray}\label{1st part domi by multi maxi}
    &&\Big[\frac{1}{ \mu_k(B)} \int_B\big|\,\mathcal{T}\overrightarrow {f^0}(z)\big|^{\nu}\,d\mu_k(z)\Big]^{1/\nu}\\
    &=&\Big[\frac{1}{ \mu_k(B)}\int_{0}^{N(C_K+A)}\nu\,t^{\nu-1}\mu_k\big(\{z\in B:|\mathcal{T}\overrightarrow{f^0}(z)|>t\}\big)\,dt\nonumber\\   
    &&+\frac{1}{ \mu_k(B)}\int_{N(C_K+A)}^\infty  \nu\,t^{\nu-1}\mu_k\big(\{z\in B:|\mathcal{T}\overrightarrow{f^0}(z)|>t\}\big)\,dt\Big]^{1/\nu}\nonumber\\
    &\leq&C \Big[ N^\nu(C_K+A)^{\nu} +(C_K+A)^{1/m}N^{1/m} \int_{N(C_K+A)}^\infty  \nu\,t^{\nu-1-1/m} \,dt\Big]^{1/\nu}\nonumber\\
    &\leq&C (C_K+A)N\leq C\, (C_K+A)\mathcal{M}^k_{HL}\overrightarrow{f}(x)\nonumber\\
    &\leq& C\, (C_K+A)\sum\limits_{\underset{\sigma _{n_s}\in G}{(n_1,n_2,\cdots,n_m)}} \mathcal{M}^k_{HL} \left(f_1\circ \sigma_{n_1}, f_2\circ \sigma_{n_2},\cdots,f_m\circ \sigma_{n_m}\right)(x).\nonumber
\end{eqnarray}
For the second term in (\ref{2nd time diving T into two parts}), we take $\alpha_{j_1}=\alpha_{j_2}=\cdots=\alpha_{j_l}=0$, where for $0\leq s\leq l,\ j_s\in\{1,2,\cdots,m\}$ and $0\leq l\leq m$ with the convention that $\{{j_1},{j_2},\cdots,{j_l}\}=\emptyset $ if $l=0$. Then, keeping in mind that  $m-l\geq 1$, $x\in B$ and $supp\, f_j^\infty\subset\mathbb{R}^d\setminus {\mathcal{O}(B^{**})} $, for any $z\in B$
we can apply smoothness condition (\ref{smoothness estimate of kernel}) to obtain
\begin{eqnarray*}
   &&\big|\,\mathcal{T}\big(f_1^{\alpha_1},f_2^{\alpha_2},\cdots, f_m^{\alpha_m}\big)(z)-\mathcal{T}\big(f_1^{\alpha_1},f_2^{\alpha_2},\cdots, f_m^{\alpha_m}\big)(x)\big|\\
   &\leq&C_K\int_{(\mathbb{R}^{d})^m}\big|K(z,y_1,y_2,\cdots,y_m)-K(x,y_1,y_2,\cdots,y_m)\big|\prod\limits_{j=1}^m \,\big|f^{\alpha_j}_j(y_j)\big|\,d\mu_k(y_j)\\
    &\leq&C_K\int_{(\mathbb{R}^{d})^m} \big[\sum\limits_{n=1}^m \mu_k\big(B(z,d_G(z,y_n))\big)\big]^{-m} 
     \left[\frac{|z-x|}{\max\limits_{1\leq n\leq m} |z-y_n|}\right]^{\epsilon} \prod\limits_{j=1}^m \,\big|f^{\alpha_j}_j(y_j)\big|\,d\mu_k(y_j)\\
     &=&C_K\,\int\limits_{\big(\mathcal{O}(B^{**})\big)^l}\prod\limits_{s=1}^l\big|f^0_{j_s}(y_{j_s})\big|
      \int\limits_{\big(\mathbb{R}^d\setminus\mathcal{O}(B^{**})\big)^{m-l}}\big[\sum\limits_{n=1}^m \mu_k\big(B(z,d_G(z,y_n))\big)\big]^{-m} \\
      &&\times
     \left[\frac{2r(B)}{\max\limits_{1\leq n\leq m} |z-y_n|}\right]^{\epsilon} \prod\limits_{j\notin\,\{{j_1},{j_2},\cdots,{j_l}\}} \,\big|f_j(y_j)\big| 
   \,\prod\limits_{s=1}^ld\mu_k(y_{j_s})\,\prod\limits_{j\notin\,\{{j_1},{j_2},\cdots,{j_l}\}}d\mu_k(y_j)\\
 &\leq &CC_K\,\sum\limits_{n=1}^\infty\int\limits_{\big(\mathcal{O}(B^{**})\big)^l}\prod\limits_{s=1}^l\big|f^0_{j_s}(y_{j_s})\big|
      \int\limits_{\big(\mathcal{O}(3^{n}B^{**})\big)^{m-l}\setminus\big(\mathcal{O}(3^{n-1}B^{**})\big)^{m-l}}\!\!\!
      \big[\sum\limits_{s=1}^m \mu_k\big(B(z,d_G(z,y_s))\big)\big]^{-m} \\
      &&\times \left[\frac{r(B)}{\max\limits_{1\leq s\leq m} d_G(z,y_s)}\right]^{\epsilon} \prod\limits_{j\notin\,\{{j_1},{j_2},\cdots,{j_l}\}} \,\big|f_j(y_j)\big|\,\prod\limits_{s=1}^ld\mu_k(y_{j_s})\,\prod\limits_{j\notin\,\{{j_1},{j_2},\cdots,{j_l}\}}d\mu_k(y_j),\\
\end{eqnarray*}
where in the last step, we have used the fact that $ (\mathbb{R}^d\setminus\mathcal{O}(B^{**}))^{m-l}\subseteq\big(\mathbb{R}^d\big)^{m-l}\setminus\big(\mathcal{O}(B^{**})\big)^{m-l}$.
\par Using the inequalities $\max\limits_{1\leq s\leq m}d_G(z,y_s)\geq C\,3^nr(B)$ and $\sum\limits_{s=1}^m \mu_k\big(B(z,d_G(z,y_s))\big)\geq C\,\mu_k(3^n B)$, from above we write
\begin{eqnarray*}
     &&\big|\,\mathcal{T}\big(f_1^{\alpha_1},f_2^{\alpha_2},\cdots, f_m^{\alpha_m}\big)(z)-\mathcal{T}\big(f_1^{\alpha_1},f_2^{\alpha_2},\cdots, f_m^{\alpha_m}\big)(x)\big|\\
     &\leq&CC_K\,\sum\limits_{n=1}^\infty3^{-n\epsilon}\int\limits_{\big(\mathcal{O}(B^{**})\big)^l}\prod\limits_{s=1}^l\big|f^0_{j_s}(y_{j_s})\big|\,d\mu_k(y_{j_s})\\
     &&\times
      \int\limits_{\big(\mathcal{O}(3^{n}B^{**})\big)^{m-l}\setminus\big(\mathcal{O}(3^{n-1}B^{**})\big)^{m-l}}\big(\mu_k(3^n B)\big)^{-m}\prod\limits_{j\notin\,\{{j_1},{j_2},\cdots,{j_l}\}} \,\big|f_j(y_j)\big|\,d\mu_k(y_j)\\
      &\leq&CC_K\,\sum\limits_{n=1}^\infty 3^{-n\epsilon}\prod\limits_{j=1}^m\frac{1}{\mu_k(3^n B)}\int\limits_{\mathcal{O}(3^n B^{**})}\big|f_{j}(y_{j})\big|\,d\mu_k(y_{j})\\
       &\leq&CC_K\,\sum\limits_{n=1}^\infty 3^{-n\epsilon}\prod\limits_{j=1}^m\Big[\sum\limits_{\sigma\in G}\frac{1}{\mu_k(3^n B)}\int\limits_{3^n B^{**}}\big|f_{j}\circ\sigma(y_{j})\big|\,d\mu_k(y_{j})\Big]\\
       &=&CC_K\,\sum\limits_{n=1}^\infty 3^{-n\epsilon}\sum\limits_{\underset{\sigma _{n_s}\in G}{(n_1,n_2,\cdots,n_m)}}\Big[\prod\limits_{j=1}^m \frac{1}{\mu_k(3^n B^{**})}\int\limits_{3^n B^{**}}\big|f_{j}\circ\sigma_{n_j}(y_{j})\big|\,d\mu_k(y_{j})\Big]\\
       &\leq& C\, (C_K+A)\sum\limits_{\underset{\sigma _{n_s}\in G}{(n_1,n_2,\cdots,n_m)}} \mathcal{M}^k_{HL} \left(f_1\circ \sigma_{n_1}, f_2\circ \sigma_{n_2},\cdots,f_m\circ \sigma_{n_m}\right)(x).
\end{eqnarray*}
Now making use of (\ref{1st part domi by multi maxi}) and the last inequality in (\ref{2nd time diving T into two parts}), we conclude the proof of (\ref{esti of sharp maxi domi by multi maxi}).
\end{proof}
\begin{prop}\label{T f vectore lpw norm is domi by M f vector lpw norm}
Let $w$ be a weight in the class $A^k_{\infty}$ and $p\in[1/m,\infty)$. Then there exists a constant $C>0$ such that if $f_1,f_2,\cdots,f_m$ are bounded functions with compact support
\begin{enumerate}[label=(\roman*)]
    \item \label{part 1 of T f vectore lpw norm is domi by M f vector lpw norm}if $p>1/m$, then
    \begin{eqnarray*}
    &&\Big(\int_{\mathbb{R}^d}\big|\mathcal{T} \overrightarrow {f} (x)\big|^p w(x)\, d\mu_k(x)\Big)^{{1}/{p}}\\
    &\leq& C(C_K+A)\sum\limits_{\underset{\sigma _{n_s}\in G}{(n_1,n_2,\cdots,n_m)}}\Big(\int_{\mathbb{R}^d}\big(\mathcal{M}_{HL} ^k \overrightarrow {f_{\bf{\sigma}}} (x)\big)^p w(x)\, d\mu_k(x) \Big)^{{1}/{p}};
    \end{eqnarray*}
    \item\label{part 2 of T f vectore lpw norm is domi by M f vector lpw norm} if $p\geq 1/m$, then
    \begin{eqnarray*}
    &&\sup\limits_{t>0}\,t\Big(\int\limits_{ \{y\,\in{\mathbb{R}^d:\,|\mathcal{T} \overrightarrow {f} (y)|>t\}}}w(x)\,d\mu_k(x)\Big)^{{1}/{p}}\\
    &\leq& C(C_K+A) \sum\limits_{\underset{\sigma _{n_s}\in G}{(n_1,n_2,\cdots,n_m)}}\sup\limits_{t>0}\,t\Big(\int\limits_{ \{y\,\in{\mathbb{R}^d:\,\mathcal{M}^k_{HL} \overrightarrow {f_{\bf{\sigma}}} (y)>t\}}}w(x)\,d\mu_k(x)\Big)^{{1}/{p}},
    \end{eqnarray*}
\end{enumerate}
where we have used the notation $\overrightarrow {f_{\bf{\sigma}}}=\left(f_1\circ \sigma_{n_1}, \cdots,f_m\circ \sigma_{n_m}\right)$.
\end{prop}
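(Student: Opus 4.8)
The two inequalities are of Fefferman--Stein type, so the plan is to feed the pointwise sharp--maximal bound of Proposition~\ref{prop sharp maxi funct domi by multi maxi} into a Fefferman--Stein inequality on the space of homogeneous type $(\mathbb{R}^d,|x-y|,d\mu_k)$. Fix once and for all a number $\nu\in(0,1/m)$; this is admissible in Proposition~\ref{prop sharp maxi funct domi by multi maxi} and satisfies $\nu<1/m\le p$. The Fefferman--Stein inequality in spaces of homogeneous type (valid for $w\in A^k_\infty$ and $0<p<\infty$, cf.\ \cite{GrafkosMAMS}) asserts that, for $g$ in a suitable a priori class (e.g.\ $g\in L^{p_0}(d\mu_k)$ for some $0<p_0<\infty$ for the strong case, and $g\in L^{p_0,\infty}(d\mu_k)$ for some $0<p_0<\infty$ for the weak case),
\[
\Big(\int_{\mathbb{R}^d}|g|^{p}\,w\,d\mu_k\Big)^{1/p}\le C\Big(\int_{\mathbb{R}^d}\big(M^{k,\,\#}_{HL,\,\nu}g\big)^{p}\,w\,d\mu_k\Big)^{1/p},
\]
and likewise $\sup_{t>0}t\,w\big(\{|g|>t\}\big)^{1/p}\le C\sup_{t>0}t\,w\big(\{M^{k,\,\#}_{HL,\,\nu}g>t\}\big)^{1/p}$, where $w(E)=\int_E w\,d\mu_k$. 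Granting this for $g=\mathcal{T}\overrightarrow{f}$, one inserts the estimate of Proposition~\ref{prop sharp maxi funct domi by multi maxi} and then pulls the finite sum over $(\sigma_{n_1},\dots,\sigma_{n_m})\in G^m$ (exactly $|G|^m$ terms) outside the $L^p(w\,d\mu_k)$--quasi--norm in part~\ref{part 1 of T f vectore lpw norm is domi by M f vector lpw norm}, respectively outside the weak-$L^p(w\,d\mu_k)$--quasi--norm in part~\ref{part 2 of T f vectore lpw norm is domi by M f vector lpw norm}---at the cost of a constant depending only on $|G|$, $m$ and $p$, both spaces being quasi--Banach for $p\ge 1/m$---and this gives precisely the asserted inequalities.

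Hence the only real point is to verify that $\mathcal{T}\overrightarrow{f}$ lies in the a priori class required above, and this is where the hypotheses on $f_1,\dots,f_m$ enter. Since each $f_j$ is bounded with compact support, $f_j\in L^{q_j}(\mathbb{R}^d,d\mu_k)$, so the standing boundedness hypothesis of this section gives $\mathcal{T}\overrightarrow{f}\in L^{q,\infty}(\mathbb{R}^d,d\mu_k)$; this already covers the weak case. For the strong case one must upgrade this to $\mathcal{T}\overrightarrow{f}\in L^{p_0}(d\mu_k)$ for some $p_0<\infty$: letting $B_0$ be a ball containing $\bigcup_{j}\mathrm{supp}\,f_j$, on the bounded set $\mathcal{O}(2B_0)$ the weak-$L^q$ membership gives $L^{p_0}$ for every $p_0<q$, while for $x$ with $d_G(x,c_{B_0})$ large one has $d_G(x,y_j)\sim d_G(x,c_{B_0})$ for all $y_j\in B_0$, so the size estimate~\eqref{size estimate of kernel} yields
\[
\big|\mathcal{T}\overrightarrow{f}(x)\big|\le C\,C_K\,\mu_k\big(B(x,d_G(x,c_{B_0}))\big)^{-m}\prod_{j=1}^{m}\|f_j\|_{L^{1}(d\mu_k)},
\]
and since $\mu_k(B(x,r))\gtrsim r^{d_k}$ when $r\sim|x|$ is large (by \eqref{Volofball}), the right-hand side decays like $|x|^{-m d_k}$, which is $\mu_k$--integrable to any power $p_0>1/m$ off a large orbit of $B_0$; combining the two pieces puts $\mathcal{T}\overrightarrow{f}$ in the required class (the borderline case $q=1/m$ being handled by a routine truncation/limiting argument).

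The main obstacle, then, is this a priori step---specifically, checking that the decay furnished by \eqref{size estimate of kernel} is genuinely summable. This is delicate because $\mu_k(B(x,r))$ can be only as large as a constant times $r^{d}$ near the reflection walls, so the decay rate of $\mathcal{T}\overrightarrow{f}$ is governed solely by the exponent $-m$ on the volume factor, and one must exploit exactly the inequality $m d_k>d_k$, i.e.\ $p_0>1/m$, to close the estimate. Everything else---the Fefferman--Stein inequality, the pointwise bound of Proposition~\ref{prop sharp maxi funct domi by multi maxi}, and the passage of the finite $G$-orbit sum through the quasi--norm---is standard bookkeeping, and the $G$-invariance of $w$ (used later, when deducing Theorems~\ref{two weight calderon Zygmund} and \ref{one weight calderon Zygmund}) plays no role here.
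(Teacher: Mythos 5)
Your overall skeleton coincides with the paper's: dominate $\mathcal{T}\overrightarrow{f}$ by $M^{k,\,\#}_{HL,\,\nu}(\mathcal{T}\overrightarrow{f})$ via a Fefferman--Stein type inequality on $(\mathbb{R}^d,|x-y|,d\mu_k)$, insert Proposition \ref{prop sharp maxi funct domi by multi maxi}, and pull the finite sum over $G^m$ out of the quasi-norm. The divergence, and the gap, is in how the a priori finiteness needed for the Fefferman--Stein step is secured. You invoke a version whose hypothesis is \emph{unweighted} membership $g\in L^{p_0}(d\mu_k)$ (resp.\ $L^{p_0,\infty}(d\mu_k)$). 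That is not the statement available in the reference the paper relies on (\cite[Lemma 4.11]{GrafkosMAMS}), whose hypothesis is the \emph{weighted} weak-type condition $\sup_{t>0}t\,\big(\int_{\{M^{k}_{HL}(|\mathcal{T}\overrightarrow{f}|^{\nu})^{1/\nu}>t\}}w\,d\mu_k\big)^{1/p_0}<\infty$; and unweighted $L^{p_0}$ or weak-$L^{q}$ membership does not by itself yield this for an arbitrary $w\in A^k_\infty$, since a set of finite $\mu_k$-measure can carry infinite $w$-measure. The paper's missing idea here is the weight truncation: set $w_N=\min\{w,N\}$, note $[w_N]_{A^k_\infty}$ is uniform in $N$, verify the weighted a priori condition trivially from $\|w_N\|_{L^\infty}\le N$ together with the $L^{1/m,\infty}(d\mu_k)$ bound of Theorem \ref{1,1...1 to 1/m bddness} (taking $p_0=1/m<p$), apply the sharp-maximal inequality and Proposition \ref{prop sharp maxi funct domi by multi maxi} with $w_N$, bound $w_N\le w$ on the right, and let $N\to\infty$ by Fatou. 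Your proposal contains no substitute for this step, so as written the Fefferman--Stein inequality you use is unproved in the generality you need, for both the strong and the weak case.

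Your alternative route -- proving global unweighted $L^{p_0}(d\mu_k)$ membership of $\mathcal{T}\overrightarrow{f}$ from the size estimate (\ref{size estimate of kernel}) -- has a second soft spot: it requires an exponent window $1/m<p_0<q$ (local Kolmogorov from weak-$L^{q}$, decay $|x|^{-md_k}$ at infinity), which is empty precisely when $q=1/m$, i.e.\ $q_1=\cdots=q_m=1$; this is not a borderline curiosity but the generic case, since the bound the paper actually exploits is exactly the $L^1\times\cdots\times L^1\to L^{1/m,\infty}$ estimate of Theorem \ref{1,1...1 to 1/m bddness}, and your ``routine truncation/limiting argument'' is left entirely unspecified (one would have to truncate $|\mathcal{T}\overrightarrow{f}|$ and check that the sharp maximal function of the truncation is controlled, with constants uniform in the truncation). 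The decay computation itself is correct ($d_G(x,c_{B_0})\sim|x|$, $\mu_k(B(x,r))\gtrsim r^{d_k}$, hence integrability of $|x|^{-md_kp_0}$ for $p_0>1/m$), and it could be used to patch the a priori hypothesis in a different way (it shows the level sets of $M^k_{HL}(|\mathcal{T}\overrightarrow{f}|^{\nu})$ are bounded, hence of finite $w$-measure), but you do not make that argument; so the proposal, as it stands, has a genuine gap at the a priori step, which the paper closes with the $w_N=\min\{w,N\}$ device.
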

\begin{proof}
We only prove \ref{part 1 of T f vectore lpw norm is domi by M f vector lpw norm} as \ref{part 2 of T f vectore lpw norm is domi by M f vector lpw norm} follows by similar arguments. For any $N\in \mathbb{N}$, define $w_N(x)=\min \{w(x), N\}$. Then the weight $w_N\in A^k_{\infty}$ (cf. \cite[p. 215]{Cruz-uribeBookWEATORDF}) and the constant $[w_N]_{A^k_\infty}$ in the $A^k_\infty$ condition, does not depend on $N$.
\par Also by Fatou's lemma 
$$\int_{\mathbb{R}^d}\big|\mathcal{T} \overrightarrow {f}(x)\big|^p\,w(x)\,d\mu_k(x)\leq \lim\limits_{N\to \infty}\int_{\mathbb{R}^d}\big|\mathcal{T} \overrightarrow {f}(x)\big|^p\,w_N(x)\,d\mu_k(x).$$
Since $f_1,f_2,\cdots,f_m$ are bounded functions with compact support, by our hypothesis, $\mathcal{T}\overrightarrow f\in L^{q,\,\infty}(\mathbb{R}^d,\, d\mu_k)$ which implies
for any $\nu\in(0,1/m)$, $\big|\mathcal{T}\overrightarrow f\big|^\nu$ is locally integrable on $\mathbb{R}^d$ (by similar method as in (\ref{1st part domi by multi maxi})).
 Next we claim that 
 \begin{equation}\label{m of tf to power nu is finite}
   \sup\limits_{t>0}t\,\Big(\int\limits_{\big\{y\in \mathbb{R}^d:\big({M}_{HL} ^k (|\mathcal{T}\overrightarrow {f}|^\nu)(y)\big)^{1/\nu}>t\big\}} w_N(x)\, d\mu_k(x) \Big)^{1/p_{0}}<\infty, 
 \end{equation}
 for some $0<p_0<p$.
 Taking $p_0=1/m$ and using the fact that $M^k_{HL}$ is bounded on $L^{r,\,\infty}(\mathbb{R}^d,d\mu_k)$ for $1\leq r<\infty$ (cf. \cite[p. 103]{GrafakosClassicalBook}), we have
\begin{eqnarray*}
 &&\sup\limits_{t>0}t\,\Big(\int\limits_{\big\{y\in \mathbb{R}^d:\big({M}_{HL} ^k (|\mathcal{T}\overrightarrow {f}|^\nu)(y)\big)^{1/\nu}>t\big\}} w_N(x)\, d\mu_k(x) \Big)^{m}\\
 &\leq& \|w_N\|_{L^\infty}^m\|{M}_{HL} ^k(|\mathcal{T}\overrightarrow {f}|^\nu)\|^{1/\nu}_{L^{1/m\nu,\, \infty}(d\mu_k)}\\
 &\leq& \|w_N\|_{L^\infty}^m \|{M}_{HL} ^k\|^{1/\nu}_{L^{1/m\nu,\, \infty}(d\mu_k)\rightarrow L^{1/m\nu,\, \infty}(d\mu_k)}\| |\mathcal{T}\overrightarrow {f}|^\nu\|^{1/\nu}_{L^{1/m\nu,\, \infty}(d\mu_k)}\\
 &=& \|w_N\|_{L^\infty}^m \|{M}_{HL} ^k\|^{1/\nu}_{L^{1/m\nu,\, \infty}(d\mu_k)\rightarrow L^{1/m\nu,\, \infty}(d\mu_k)}\| \mathcal{T}\overrightarrow {f}\|_{L^{1/m,\, \infty}(d\mu_k)}<\infty.
\end{eqnarray*}
This completes the proof of the claim (\ref{m of tf to power nu is finite}).
\par Therefore, by arguing as \cite[Lemma 4.11 (i)]{GrafkosMAMS} for the space of homogeneous type $(\mathbb{R}^d,| x-y |,d\mu_k) $ and using Proposition \ref{prop sharp maxi funct domi by multi maxi}, we finally write
\begin{eqnarray*}
&&\Big(\int_{\mathbb{R}^d}\big|\mathcal{T} \overrightarrow {f} (x)\big|^p w_N(x)\, d\mu_k(x)\Big)^{{1}/{p}} \\
&\leq &\Big(\int_{\mathbb{R}^d}\big({M}_{HL} ^k (|\mathcal{T}\overrightarrow {f}|^\nu)(x)\big)^{p/\nu}w_N(x)\, d\mu_k(x)\Big)^{{1}/{p}}\\
&\leq &C_{[w_N]_{A^k_\infty}}\Big(\int_{\mathbb{R}^d}\big({M}_{HL,\nu} ^{k,\#} (\mathcal{T}\overrightarrow {f})(x)\big)^{p}w_N(x)\, d\mu_k(x)\Big)^{{1}/{p}}\\
&\leq& C\,(C_K+A)\sum\limits_{\underset{\sigma _{n_s}\in G}{(n_1,n_2,\cdots,n_m)}} \Big(\int_{\mathbb{R}^d}\big(\mathcal{M}^k_{HL} \left(f_1\circ \sigma_{n_1}, \cdots,f_m\circ \sigma_{n_m}\right)(x)\big)^p w_N(x)\, d\mu_k(x)\Big)^{{1}/{p}}\\
&\leq& C\,(C_K+A)\sum\limits_{\underset{\sigma _{n_s}\in G}{(n_1,n_2,\cdots,n_m)}} \Big(\int_{\mathbb{R}^d}\big(\mathcal{M}^k_{HL} \left(f_1\circ \sigma_{n_1}, \cdots,f_m\circ \sigma_{n_m}\right)(x)\big)^p w(x)\, d\mu_k(x)\Big)^{{1}/{p}}.
\end{eqnarray*}
Recalling the fact that $[w_N]_{A^k_\infty}$ is independent of $N$,  we get the required result by taking $N\to\infty.$
\end{proof}
Now we are in a position to prove our main results regarding weighted inequalities for multilinear Dunkl-Calder\'on-Zygmund operators.
\begin{proof}[Proof of Theorem \ref{two weight calderon Zygmund} \ref{CZ two weigh weak thm}]
    Proof follows at once from Proposition \ref{T f vectore lpw norm is domi by M f vector lpw norm} \ref{part 2 of T f vectore lpw norm is domi by M f vector lpw norm}, Theorem \ref{multi maximal funct two weight} \ref{multi maximal two weigh weak thm} and the $G$-invariance of the weights.
\end{proof}
\begin{proof}[Proof of Theorem \ref{two weight calderon Zygmund} \ref{CZ two weight strong thm}] Similarly the proof follows form Proposition \ref{T f vectore lpw norm is domi by M f vector lpw norm} \ref{part 1 of T f vectore lpw norm is domi by M f vector lpw norm}, Theorem \ref{multi maximal funct two weight} \ref{multi maximal two weight strong thm} and the $G$-invariance of the weights.
\end{proof}
\begin{proof}[Proof of Theorem \ref{one weight calderon Zygmund} \ref{CZ one weigh weak thm}]
  Note that $\overrightarrow w\in A_{\overrightarrow P}^k$ implies that $\prod\limits_{j=1}^d w_j^{p/p_j}\in A^k_{\infty}$ (see \cite[Proposition 4.3]{GrafkosMAMS}). Hence, in this case also proof from Proposition \ref{T f vectore lpw norm is domi by M f vector lpw norm} \ref{part 2 of T f vectore lpw norm is domi by M f vector lpw norm}, Theorem \ref{multi maximal funct two weight} \ref{multi maximal two weigh weak thm} with $v=\prod\limits_{j=1}^d w_j^{p/p_j}$ and the $G$-invariance of the weights. 
\end{proof}
\begin{proof}[Proof of Theorem \ref{one weight calderon Zygmund} \ref{CZ one weigh strong thm}]
The proof can be completed by using Proposition \ref{T f vectore lpw norm is domi by M f vector lpw norm} \ref{part 1 of T f vectore lpw norm is domi by M f vector lpw norm}, Theorem \ref{multi maximal funct one weight}  and the $G$-invariance of the weights together with the property of the $A^k_{\overrightarrow P}$ weights as used in the last proof.
\end{proof}
\section{Proofs of The  Weighted Inequalities for Multipliers }\label{proof main theorem multiplier}

\par The first main result of this section is Coifman-Meyer \cite{CoifmenNHAOTAPIBL} type multiplier theorem in Dunkl setting.
\begin{thm}\label{bilinear multi main thm}
 Let $1<p,p_1,p_2<\infty$ with $1/p=1/p_1+1/p_2$ and $L\in \mathbb{N}$ be such that $L>2d+2\lfloor d_k \rfloor+4$. If ${\bf m}\in C^L\left(\mathbb{R}^d\times \mathbb{R}^d \setminus\{(0,0)\}\right)$ be a function satisfying
 \begin{equation*}
     |\partial^{\alpha}_{\xi}\partial^{\beta}_{\eta}{\bf m}(\xi,\eta)|\leq C_{\alpha,\,\beta}\left(|\xi|+|\eta|\right)^{-(|\alpha|+|\beta|)}
 \end{equation*}
 for all multi-indices $\alpha,\beta \in \left(\mathbb{N}\cup\{0\}\right)^d$ such that $|\alpha|+|\beta|\leq L$ and for all $(\xi,\eta)\in \mathbb{R}^d \times \mathbb{R}^d\setminus\{(0,0)\};$ then for all $f_1 \in L^{p_1}(\mathbb{R}^d,d\mu_k)$ and $ f_2 \in L^{p_2}(\mathbb{R}^d,d\mu_k)$ the following boundedness holds:
 $$  ||\mathcal{T}_{\bf m}(f_1,f_2)||_{L^{p}(d\mu_k)} \leq C\, ||f_1||_{L^{p_1}(d\mu_k)}\, ||f_2||_{L^{p_2}(d\mu_k)}.$$
\end{thm}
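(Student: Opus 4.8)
The plan is to adapt the Coifman--Meyer paraproduct argument (\cite[pp.~67--71]{MuscaluBook2CAMHA}, \cite[Theorem~4.1]{WrobelABMVAFC}) to the Dunkl setting, feeding in the two Littlewood--Paley theorems of Section~\ref{Littlewood-Plaey section}. By density it suffices to treat $f_1,f_2\in\mathcal S(\mathbb R^d)$, for which all the series below converge absolutely. First I fix $\Phi\in C_c^\infty(\mathbb R^{2d})$ with $\Phi\equiv1$ near $0$, put $\Psi(\zeta)=\Phi(\zeta)-\Phi(2\zeta)$, and decompose ${\bf m}=\sum_{j\in\mathbb Z}{\bf m}_j$ with ${\bf m}_j(\xi,\eta)={\bf m}(\xi,\eta)\Psi(2^{-j}(\xi,\eta))$, supported in $\{2^{j-1}\le|(\xi,\eta)|\le2^{j+1}\}$; since the hypothesis on ${\bf m}$ is invariant under the scaling $(\xi,\eta)\mapsto(2^j\xi,2^j\eta)$, the rescaled pieces ${\bf m}_j(2^j\cdot)$ have $C^L$-norms bounded uniformly in $j$. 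Because a shell of $\mathbb R^{2d}$ is not a product set, I split each ${\bf m}_j$ further by a fixed smooth partition of unity into three pieces ${\bf m}_j^{(1)},{\bf m}_j^{(2)},{\bf m}_j^{(3)}$: on ${\bf m}_j^{(1)}$ one has $|\xi|\sim2^j$ with $|\eta|$ confined to a ball of small radius $c\,2^j$, on ${\bf m}_j^{(2)}$ the symmetric situation, and on ${\bf m}_j^{(3)}$ both $|\xi|\sim|\eta|\sim2^j$.

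Next I expand each rescaled piece in a multiple Fourier series over a large cube and reinsert a fixed product cutoff $\psi^{(l)}(2^{-j}\xi)\,\zeta^{(l)}(2^{-j}\eta)$ that equals $1$ on its support --- for $l=1,2$ one factor is supported in an annulus bounded away from $0$ and the other in a ball centred at $0$, while for $l=3$ both factors are supported in annuli bounded away from $0$. Integrating by parts $L$ times shows the Fourier coefficients satisfy $|c^{j,l}_\nu|\le C_L(1+|\nu|)^{-L}$ uniformly in $j$, for $\nu=(\nu_1,\nu_2)\in\mathbb Z^{2d}$. This expresses $\mathcal T_{\bf m}(f_1,f_2)$ as a sum over $l\in\{1,2,3\}$ and $\nu$ of the quantities $\sum_j c^{j,l}_\nu\,\psi^{(l)}(\nu_1,D_k/2^j)f_1\cdot\zeta^{(l)}(\nu_2,D_k/2^j)f_2$, where $\psi^{(l)}(u,D_k/2^j)$ is exactly the family of shifted multiplier operators from Theorems~\ref{Littlewood-Paley l2 thm} and~\ref{Littlewood-Plaey l infinty thm} (the shift parameters being lattice points up to a fixed dilation, so the constants produced by those theorems are $\lesssim(1+|\nu_1|)^n$ and $\lesssim(1+|\nu_2|)^n$ with $n=\lfloor d_k\rfloor+2$).

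For the piece $l=3$ --- both cutoffs annular --- the estimate is immediate and uses \emph{only} Theorem~\ref{Littlewood-Paley l2 thm}: for fixed $\nu$ pull out $\sup_j|c^{j,3}_\nu|\le C_L(1+|\nu|)^{-L}$, apply the Cauchy--Schwarz inequality in $j$, then Hölder's inequality with $1/p=1/p_1+1/p_2$, and then the square-function bound of Theorem~\ref{Littlewood-Paley l2 thm} to each factor, obtaining a bound $C_L(1+|\nu|)^{-L}(1+|\nu_1|)^n(1+|\nu_2|)^n\|f_1\|_{L^{p_1}(d\mu_k)}\|f_2\|_{L^{p_2}(d\mu_k)}$; no information about the Dunkl frequencies of the products is needed here. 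The pieces $l=1,2$ are the genuine paraproducts, and by symmetry I treat $l=1$. Here the point is that $g_j:=\psi^{(1)}(\nu_1,D_k/2^j)f_1\cdot\zeta^{(1)}(\nu_2,D_k/2^j)f_2$ has Dunkl transform supported in an annulus $\{|\cdot|\sim2^j\}$ of fixed proportions: the Dunkl transforms of the two factors are supported in an annulus $\{|\xi|\sim2^j\}$ bounded away from $0$ and in a ball $\{|\eta|\le c\,2^j\}$, respectively, and the product formula $\mathcal F_k(uv)=\mathcal F_ku*_k\mathcal F_kv$ (dual to the Dunkl convolution theorem), the support theorem for the Dunkl translation, and the fact that the relevant frequency sets are $G$-invariant together give the claimed localization once $c$ is chosen small enough. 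Granting this, the dual Littlewood--Paley inequality --- which I will deduce in the Dunkl setting from Theorem~\ref{Littlewood-Paley l2 thm} (with zero shift) by duality, valid since $1<p<\infty$ --- gives $\|\sum_j c^{j,1}_\nu g_j\|_{L^p(d\mu_k)}\le C_L(1+|\nu|)^{-L}\|(\sum_j|g_j|^2)^{1/2}\|_{L^p(d\mu_k)}$; bounding $(\sum_j|g_j|^2)^{1/2}\le\big(\sup_j|\zeta^{(1)}(\nu_2,D_k/2^j)f_2|\big)\big(\sum_j|\psi^{(1)}(\nu_1,D_k/2^j)f_1|^2\big)^{1/2}$ and applying Hölder's inequality together with Theorem~\ref{Littlewood-Paley l2 thm} and Theorem~\ref{Littlewood-Plaey l infinty thm} yields again the bound $C_L(1+|\nu|)^{-L}(1+|\nu_1|)^n(1+|\nu_2|)^n\|f_1\|_{L^{p_1}(d\mu_k)}\|f_2\|_{L^{p_2}(d\mu_k)}$.

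Finally I sum over $\nu\in\mathbb Z^{2d}$ and $l\in\{1,2,3\}$: since $(1+|\nu_1|)^n(1+|\nu_2|)^n\le(1+|\nu|)^{2n}$ and the hypothesis $L>2d+2\lfloor d_k\rfloor+4=2d+2n$ makes $\sum_{\nu\in\mathbb Z^{2d}}(1+|\nu|)^{2n-L}$ finite, this produces $\|\mathcal T_{\bf m}(f_1,f_2)\|_{L^p(d\mu_k)}\le C\|f_1\|_{L^{p_1}(d\mu_k)}\|f_2\|_{L^{p_2}(d\mu_k)}$ for Schwartz $f_1,f_2$, and a density argument finishes the proof. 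I expect the main obstacle to be the paraproduct pieces $l=1,2$: establishing the frequency localization of the product of two Dunkl band-limited functions --- which is where the reflection group genuinely intervenes, forcing the use of the support theorem for the Dunkl translation and of the $G$-invariance of the frequency regions --- together with transferring the dual Littlewood--Paley inequality to the Dunkl framework from Theorem~\ref{Littlewood-Paley l2 thm}.
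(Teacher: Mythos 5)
Your proposal is correct and follows essentially the same route as the paper's proof: a Littlewood--Paley trichotomy (diagonal piece plus two paraproducts), Fourier-series expansion of the rescaled symbol pieces with coefficients decaying like $(1+|\nu|)^{-L}$, the square-function and maximal estimates of Theorems \ref{Littlewood-Paley l2 thm} and \ref{Littlewood-Plaey l infinty thm}, the frequency-support lemma for products of Dunkl band-limited functions proved via the support theorem for the Dunkl translation (the paper's Lemma \ref{entering phi lemma}), and summation over $\nu$ using $L>2d+2\lfloor d_k\rfloor+4$. The only cosmetic differences are that the paper starts from a tensor dyadic partition grouped by $|j_1-j_2|\le 4$ rather than radial shells in $\mathbb{R}^{2d}$, and it carries out your ``dual Littlewood--Paley inequality'' inline, by pairing with $g\in L^{p'}(d\mu_k)$ and moving the inserted cutoff $\Phi(0,D_k/2^j)$ onto $g$ before applying Theorem \ref{Littlewood-Paley l2 thm} with zero shift.
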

\begin{proof}
Let $\psi\in C^{\infty}(\mathbb{R}^d)$ be such that $supp\, \psi \subset \{\xi\in\mathbb{R}^d: 1/2\leq |\xi|\leq 2\}$ and 
$$\sum\limits_{j\in\mathbb{Z}}\psi_j(\xi)=1 \text{ for all } \xi\neq 0,$$
where $\psi_j(\xi)=\psi(\xi/2^j)$ for all $\xi\in\mathbb{R}^d.$
Then 
\begin{eqnarray*}
\mathcal{T}_{\bf m}(f_1,f_2)(x)&=&\int_{\mathbb{R}^{2d}}\sum\limits_{j_1\in\mathbb{Z}}\sum\limits_{j_2\in\mathbb{Z}} \psi_{j_1}(\xi)\psi_{j_2}(\eta) {\bf m}(\xi,\eta)\mathcal{F}_kf_1(\xi)\mathcal{F}_kf_2(\eta)E_k(ix,\xi)\\&& \times E_k(ix,\eta)\,d\mu_k(\xi)d\mu_k(\eta)\\
&=&\int_{\mathbb{R}^{2d}}\sum\limits_{|j_1-j_2|\leq 4}\cdots +\int_{\mathbb{R}^{2d}}\sum\limits_{j_1>j_2+4}\cdots +\int_{\mathbb{R}^{2d}}\sum\limits_{j_2>j_1+4}\cdots\\
&=:&\mathcal{T}_1(f_1,f_2)(x)+\mathcal{T}_2(f_1,f_2)(x)+\mathcal{T}_3(f_1,f_2)(x)
\end{eqnarray*}
We will calculate the estimates for $\mathcal{T}_1$, $\mathcal{T}_2$ and $\mathcal{T}_3$ separately.\\\\
\underline{Estimate of $\mathcal{T}_1$:}
\begin{eqnarray*}
\text{ For any } j\in\mathbb{Z} \text{ define } {\bf m}_j(\xi,\eta)&=&\psi_j(\xi)\sum\limits_{j_2:|j-j_2|\leq 4}\psi_{j_2}(\eta){\bf m}(\xi,\eta)\\
&=:&\psi_j(\xi)\phi_j(\eta){\bf m}(\xi,\eta),
\end{eqnarray*}
where $\phi(\eta)=\sum\limits_{|j|\leq 4}\psi_j(\eta)$ and $\phi_j(\eta)=\phi(\eta/2^j)$.
\par Then $supp\, \phi \subset \{\xi\in\mathbb{R}^d: 2^{-5}\leq |\xi|\leq 2^5\}$.
\par Let $\tilde\psi\in C^{\infty}(\mathbb{R}^d)$ be another function such that $0\leq \tilde\psi\leq 1$ and
$$\tilde \psi (\xi) =
\left\{
	\begin{array}{ll}
		0  & \mbox{if } |\xi|\notin [2^{-6},2^6],\\
		1 & \mbox{if } |\xi|\in [2^{-5},2^5].
	\end{array}
\right.$$
Then we have for any $j\in\mathbb{Z}$,
\begin{eqnarray*}
{\bf m}_j(\xi,\eta)&=& \psi_j(\xi)\phi_j(\eta){\bf m}(\xi,\eta)\\
&=&\tilde\psi_j(\xi)\psi_j(\xi)\tilde\psi_j(\eta)\phi_j(\eta){\bf m}(\xi,\eta)\\
&=&\tilde\psi_j(\xi)\tilde\psi_j(\eta){\bf m}_j(\xi,\eta).
\end{eqnarray*}
Now $supp\, {\bf m}_j \subset \big\{(\xi,\eta)\in\mathbb{R}^d \times \mathbb{R}^d:2^{j-6}\leq |\xi|\leq 2^{j+6},\ 2^{j-6}\leq |\eta|\leq 2^{j+6}\big\}$.
\par Define for all $j\in \mathbb{Z}$,
$$a_j(\xi,\eta)={\bf m}_j(2^j\xi,2^j\eta).$$
Then  $supp\, a_j \subset \big\{(\xi,\eta)\in\mathbb{R}^d \times \mathbb{R}^d:2^{-6}\leq |\xi|\leq 2^{6},\ 2^{-6}\leq |\eta|\leq 2^{6}\big\}$. Now using support of $a_j$ and smoothness assumption on ${\bf m}$, expanding $a_j$ in terms Fourier series over $[2^{-6},2^6]^d\times [2^{-6},2^6]^d$ we write
\begin{equation}\label{formula for Fourier series of aj}
a_j(\xi,\eta)=\sum\limits_{{\bf n}_1\in\mathbb{Z}^d}\sum\limits_{{\bf n}_2\in\mathbb{Z}^d}c_j({\bf n}_1,{\bf n}_2)\,e^{2\pi i (\langle \xi,\,{\bf n}_1\rangle+\langle \eta,\,{\bf n}_2\rangle )},
\end{equation}
where the Fourier coefficients $c_j({\bf n}_1,{\bf n}_2)$ are given by
$$c_j({\bf n}_1,{\bf n}_2)=\iint\limits_{[2^{-6},\,2^6]^d\times [2^{-6},\,2^6]^d}a_j(y,z)\,e^{-2\pi i (\langle y,\,{\bf n}_1\rangle+\langle z,\,{\bf n}_2\rangle )}\,dydz.$$
\par Thus we get
\begin{eqnarray*}
{\bf m}_j(\xi,\eta)&=&\sum\limits_{{\bf n}_1\in\mathbb{Z}^d}\sum\limits_{{\bf n}_2\in\mathbb{Z}^d}c_j({\bf n}_1,{\bf n}_2)\,e^{2\pi i (\langle \xi,\,{\bf n}_1\rangle+\langle \eta,\,{\bf n}_2\rangle )/2^j}\\
&=&\sum\limits_{{\bf n}_1\in\mathbb{Z}^d}\sum\limits_{{\bf n}_2\in\mathbb{Z}^d}c_j({\bf n}_1,{\bf n}_2)\,e^{2\pi i (\langle \xi,\,{\bf n}_1\rangle+\langle \eta,\,{\bf n}_2\rangle )/2^j}\tilde\psi_j(\xi)\tilde\psi_j(\eta).
\end{eqnarray*}
Substituting this in the expression for $\mathcal{T}_1$ and interchanging sum and integration, we obtain
\begin{eqnarray}\label{expression for T1}
&&\mathcal{T}_1(f_1,f_2)(x)\\
&=&\int_{\mathbb{R}^{2d}}\sum\limits_{j\in\mathbb{Z}} {\bf m}_j(\xi,\eta)\mathcal{F}_kf_1(\xi)\mathcal{F}_kf_2(\eta)E_k(ix,\xi)E_k(ix,\eta)\,d\mu_k(\xi)d\mu_k(\eta)\nonumber\\
&=&\sum\limits_{{\bf n}_1\in\mathbb{Z}^d}\sum\limits_{{\bf n}_2\in\mathbb{Z}^d}\sum\limits_{j\in\mathbb{Z}}c_j({\bf n}_1,{\bf n}_2) \left(\int_{\mathbb{R}^d}\tilde\psi _j(\xi)\,e^{2\pi i\langle {\bf n}_1,\, \xi\rangle/2^j}\mathcal{F}_kf_1(\xi)E_k(ix,\xi)\,d\mu_k(\xi)\right)\nonumber\\\nonumber
&&\times \left(\int_{\mathbb{R}^d}\tilde\psi _j(\eta)\,e^{2\pi i\langle {\bf n}_2,\, \eta\rangle/2^j}\mathcal{F}_kf_2(\eta)E_k(ix,\eta)\,d\mu_k(\eta)\right)\\
&=&\sum\limits_{{\bf n}_1\in\mathbb{Z}^d}\sum\limits_{{\bf n}_2\in\mathbb{Z}^d}\sum\limits_{j\in\mathbb{Z}}c_j({\bf n}_1,{\bf n}_2)\, \tilde\psi(2\pi {\bf n}_1,D_k/2^j)f_1(x)\,\tilde\psi(2\pi {\bf n}_2,D_k/2^j)f_2(x),\nonumber
\end{eqnarray}
where $\tilde\psi(2\pi {\bf n}_1,D/2^j)f_1(x)$ and $\tilde\psi(2\pi {\bf n}_2,D/2^j)f_2(x)$ are as in Section \ref{Littlewood-Plaey section}.
Now as $\psi$ and $\phi$ are supported compactly away from origin and ${\bf m}$ satisfies (\ref{usual deriv condtn m}), by Leibniz rule we have that for all $j\in\mathbb{Z}$,
\begin{equation}\label{m xi eta satisfies derivative condtn}
|\partial^{\alpha}_{\xi}\partial^{\beta}_{\eta}\left({\bf m}(2^j\xi,2^j\eta)\psi(\xi)\phi(\eta)\right)|\leq C_{\alpha,\beta}\left(|\xi|+|\eta|\right)^{-(|\alpha|+|\beta|)}  
\end{equation}
for all $\alpha,\beta \in \left(\mathbb{N}\cup\{0\}\right)^d$ such that $|\alpha|+|\beta|\leq L$ and for all $(\xi,\eta)\in \mathbb{R}^d \times \mathbb{R}^d\setminus\{(0,0)\}.$
\par Now for any $\alpha,\beta \in \left(\mathbb{N}\cup\{0\}\right)^d$ we have
\begin{eqnarray*}
   c_j({\bf n}_1,{\bf n}_2)&=&\iint\limits_{[2^{-6},\,2^6]^d\times [2^{-6},\,2^6]^d}{\bf m}(2^jy,2^jz)\psi(y)\phi(z)\,e^{-2\pi i (\langle y,\,{\bf n}_1\rangle+\langle z,\,{\bf n}_2\rangle )}\,dydz\\
   &=&\frac{C}{{\bf n}_1^\alpha {\bf n}_2^\beta}\  \iint\limits_{[2^{-6},\,2^6]^d\times [2^{-6},\,2^6]^d}{\bf m}(2^jy,2^jz)\psi(y)\phi(z)\,\partial^{\alpha}_{y}\partial^{\beta}_{z}e^{-2\pi i (\langle y,\,{\bf n}_1\rangle+\langle z,\,{\bf n}_2\rangle )}\,dydz.
\end{eqnarray*}
Hence applying integration by parts formula and using (\ref{m xi eta satisfies derivative condtn}), we get
\begin{equation}\label{decay of Fourier coeff}
   |c_j({\bf n}_1,{\bf n}_2)|\leq C_{L}\,\frac{1}{(1+|{\bf n}_1|+|{\bf n}_2|)^{L}}
\end{equation}
  Now using (\ref{decay of Fourier coeff})  and applying Cauchy-Schwarz inequality, from (\ref{expression for T1}) we have
\begin{eqnarray*}
|\mathcal{T}_1(f_1,f_2)(x)|&\leq& \sum\limits_{{\bf n}_1\in\mathbb{Z}^d}\sum\limits_{{\bf n}_2\in\mathbb{Z}^d}\sum\limits_{j\in\mathbb{Z}}\big|c_j({\bf n}_1,{\bf n}_2)\, \tilde\psi(2\pi {\bf n}_1,D_k/2^j)f_1(x)\,\tilde\psi(2\pi {\bf n}_2,D_k/2^j)f_2(x)\big|\\
&\leq&C\, \sum\limits_{{\bf n}_1\in\mathbb{Z}^d}\sum\limits_{{\bf n}_2\in\mathbb{Z}^d}\frac{C}{(1+|{\bf n}_1|+|{\bf n}_2|)^{L}}\Big(\sum\limits_{j\in \mathbb{Z}}|\tilde\psi(2\pi {\bf n}_1,D_k/2^j)f_1(x)|^2\Big)^{1/2}\\
&&\times \Big(\sum\limits_{j\in \mathbb{Z}}|\tilde\psi(2\pi {\bf n}_2,D_k/2^j)f_2(x)|^2\Big)^{1/2}
\end{eqnarray*}
Finally from H\"older's inequality and Theorem \ref{Littlewood-Paley l2 thm} and using the facts that $L>2d+2\lfloor d_k \rfloor+4$ and $n=\lfloor d_k \rfloor+2$, we obtain
\begin{eqnarray*}
  &&\Big(\int_{\mathbb{R}^d}|\mathcal{T}_1(f_1,f_2)(x)|^p\,d\mu_k(x)\Big)^{1/p}\\
  &\leq& C\, \sum\limits_{{\bf n}_1\in\mathbb{Z}^d}\sum\limits_{{\bf n}_2\in\mathbb{Z}^d}
\frac{1}{(1+|{\bf n}_1|+|{\bf n}_2|)^{L}} \,\Big\|\Big(\sum\limits_{j\in \mathbb{Z}}|\tilde\psi(2\pi {\bf n}_1,D_k/2^j)f_1|^2\Big)^{1/2}\Big\|_{L^{p_1}(d\mu_k)}\\
  &&\times \Big\|\Big(\sum\limits_{j\in \mathbb{Z}}|\tilde\psi(2\pi {\bf n}_2,D_k/2^j)f_2|^2\Big)^{1/2}\Big\|_{L^{p_2}(d\mu_k)}\\
  &\leq&C\, ||f_1||_{L^{p_1}(d\mu_k)}\, ||f_2||_{L^{p_2}(d\mu_k)}\sum\limits_{{\bf n}_1\in\mathbb{Z}^d}\sum\limits_{{\bf n}_2\in\mathbb{Z}^d} \frac{(|1+|{\bf n}_1|)^{n}(|1+|{\bf n}_2|)^{n}}{(1+|{\bf n}_1|+|{\bf n}_2|)^{L}}\\
&\leq&C\, ||f_1||_{L^{p_1}(d\mu_k)}\, ||f_2||_{L^{p_2}(d\mu_k)}
\end{eqnarray*}

 This concludes the proof for $\mathcal{T}_1.$ \\\\
\underline{Estimate of $\mathcal{T}_2$:}
\begin{eqnarray*}
\text{ For any } j\in\mathbb{Z}, \text{ define } {\bf m}_j(\xi,\eta)&=&\psi_j(\xi)\sum\limits_{j_2:j_2<j-4}\psi_{j_2}(\eta){\bf m}(\xi,\eta)\\
&=:&\psi_j(\xi)\phi_j(\eta){\bf m}(\xi,\eta),
\end{eqnarray*}
where $\phi(\eta)=\sum\limits_{j<- 4}\psi_j(\eta)$ and $\phi_j(\eta)=\phi(\eta/2^j)$.
\par Then $supp\, \phi \subset \{\xi\in\mathbb{R}^d:  |\xi|\leq 2^{-3}\}$.
\par Let $\tilde\psi,\ \tilde\phi\in C^{\infty}(\mathbb{R}^d)$ be another two functions such that $0\leq \tilde\psi,\ \tilde\phi\leq 1$ and
$$\tilde \psi (\xi) =
\left\{
	\begin{array}{ll}
		0  & \mbox{if } |\xi|\notin [\frac{2}{5},\frac{5}{2}],\\
		1 & \mbox{if } |\xi|\in [2^{-1},2];
	\end{array}
\right.$$
$$\tilde \phi (\xi) =
\left\{
	\begin{array}{ll}
		0  & \mbox{if } |\xi|\notin [0,2^{-2}],\\
		1 & \mbox{if } |\xi|\in [0,2^{-3}].
	\end{array}
\right.$$

Then we have for any $j\in\mathbb{Z}$,
\begin{eqnarray*}
{\bf m}_j(\xi,\eta)&=& \psi_j(\xi)\phi_j(\eta){\bf m}(\xi,\eta)\\
&=&\tilde\psi_j(\xi)\psi_j(\xi)\tilde\phi_j(\eta)\phi_j(\eta){\bf m}(\xi,\eta)\\
&=&\tilde\psi_j(\xi)\tilde\phi_j(\eta){\bf m}_j(\xi,\eta).
\end{eqnarray*}
Now $supp\, {\bf m}_j \subset \big\{(\xi,\eta)\in\mathbb{R}^d \times \mathbb{R}^d:2^{j-1}\leq |\xi|\leq 2^{j+1},\  |\eta|\leq 2^{j-3}\big\}$.
\par Define for all $j\in \mathbb{Z}$,
$$a_j(\xi,\eta)={\bf m}_j(2^j\xi,2^j\eta).$$
Then  $supp\, a_j \subset \big\{(\xi,\eta)\in\mathbb{R}^d \times \mathbb{R}^d:2^{-1}\leq |\xi|\leq 2^{},\ |\eta|\leq 2^{-3}\big\}$.
\par Although $\psi(\xi)\,\phi(\eta)$ does not vanish at $\eta=0$ but it is clear that it vanishes near $\xi=0$. Hence, we can repeat the arguments as in the case of $\mathcal{T}_1$ to obtain that
\begin{eqnarray}\label{estimate of T2}
 && \mathcal{T}_2(f_1,f_2)(x)\\
 &=& \sum\limits_{{\bf n}_1\in\mathbb{Z}^d}\sum\limits_{{\bf n}_2\in\mathbb{Z}^d}\sum\limits_{j\in\mathbb{Z}}c_j({\bf n}_1,{\bf n}_2)\, \tilde\psi(2\pi {\bf n}_1,D_k/2^j)f_1(x)\,\tilde\phi(2\pi {\bf n}_2,D_k/2^j)f_2(x)\nonumber  
 \end{eqnarray}
 and also (\ref{decay of Fourier coeff}) holds.
\par To complete the proof in this case, we need the following lemma. A version of this lemma can be found in \cite{WrobelABMVAFC}, however for the sake of correctness and completeness, we provide a proof here.
\begin{lem}\label{entering phi lemma}
  Let $\Phi$ be a smooth function on $\mathbb{R}^d$ such that $0\leq \Phi\leq 1$, $supp\, \Phi\subset\{\xi\in \mathbb{R}^d:2^{-6}\leq |\xi|\leq 2^6\}$ and $\Phi(\xi)=1$ for $2^{-5}\leq |\xi|\leq 2^5$. For $j\in \mathbb{Z}$, define $\Phi_j(\xi)=\Phi (\xi/2^j)$ and for $f\in \mathcal{S}(\mathbb{R}^d)$ define
$$\Phi(0,D_k/2^j)f(x)=\int_{\mathbb{R}^d}\Phi_j(\xi) \mathcal{F}_kf(\xi)E_k(ix,\xi)\,d\mu_k(\xi) .$$  
Then for any $j\in\mathbb{Z}$ and $x\in\mathbb{R}^d$,
\begin{eqnarray*}
&&\tilde\psi(2\pi {\bf n}_1,D_k/2^j)f_1(x)\,\tilde\phi(2\pi {\bf n}_2,D_k/2^j)f_2(x)\\
&=&\Phi(0,D_k/2^j)\left(\tilde\psi(2\pi {\bf n}_1,D_k/2^j)f_1(\cdot)\,\tilde\phi(2\pi {\bf n}_2,D_k/2^j)f_2(\cdot)\right)(x).
\end{eqnarray*}
\end{lem}
\begin{proof}
It is enough to prove that 
\begin{eqnarray*}
    &&\mathcal{F}_k\left(\tilde\psi(2\pi {\bf n}_1,D_k/2^j)f_1(\cdot)\,\tilde\phi(2\pi {\bf n}_2,D_k/2^j)f_2(\cdot)\right)(\xi)\\
    &=&\Phi_j(\xi)\mathcal{F}_k\left(\tilde\psi(2\pi {\bf n}_1,D_k/2^j)f_1(\cdot)\,\tilde\phi(2\pi {\bf n}_1,D_k/2^j)f_2(\cdot)\right)(\xi),
\end{eqnarray*}
for all $\xi\in\mathbb{R}^d.$
\par Applying properties of Dunkl convolution the above equality is equivalent to 
\begin{eqnarray*}
&& \left(\tilde\psi _j(\cdot)e^{\langle 2\pi i{\bf n}_1,\,\cdot\rangle/2^j}\mathcal{F}_kf_1(\cdot)\,*_k\,\tilde\phi _j(\cdot)e^{\langle 2\pi i{\bf n}_2,\,\cdot\rangle/2^j}\mathcal{F}_kf_2(\cdot)\right)(\xi)\\
&=&\Phi_j(\xi)\left(\tilde\psi _j(\cdot)e^{\langle 2\pi i{\bf n}_1,\,\cdot\rangle/2^j}\mathcal{F}_kf_1(\cdot)\,*_k\,\tilde\phi _j(\cdot)e^{\langle 2\pi i{\bf n}_2,\,\cdot\rangle/2^j}\mathcal{F}_kf_2(\cdot)\right)(\xi),
\end{eqnarray*}
for all $\xi\in\mathbb{R}^d.$
Again, to prove that the above equality holds, from definition of $\tilde\psi$, $\tilde\phi$ and $\Phi$, it suffices to show that
for any $f,\, g \in \mathcal{S}(\mathbb{R}^d)$ with $supp\, f\subset \big\{\xi \in \mathbb{R}^d:\frac{2^{j+1}}{5}\leq |\xi|\leq 5.2^{j-1}\big\}$, $supp\, g\subset \big\{\eta \in \mathbb{R}^d:  |\eta|\leq 2^{j-2}\big\}$, we have that 
$$supp\, (f*_kg)\subset \big\{\xi\in \mathbb{R}^d:2^{j-5}\leq |\xi|\leq 2^{j+5}\big\}.$$
 Take $x\in \mathbb{R}^d$ such that $|x|\notin [2^{j-5},\  2^{j+5}]$. Now 
 $$f*_kg(x)=\int\limits_{\frac{2^{j+1}}{5}\leq |y|\leq 5.2^{j-1}}f(y)\tau^k_xg(-y)\,d\mu_k(y)$$
 Now from \cite[Theorem 1.7]{HejnaHMT} or \cite[Theorem 5.1]{AmriTRIDA} we have 
 $$supp\, \tau^k_xg(-\cdot)\subset\big\{y\in \mathbb{R}^d:|2^{j-2}-|x|\,|\leq |y|\leq |x|+2^{j-2}\big\}.$$
 But $|x|\notin [2^{j-5},\  2^{j+5}]$ and $y\in supp\, \tau^k_xg(-\cdot)$ together implies either $|y|< \frac{2^{j+1}}{5}$ or $|y|>5.2^{j-1}$ and which implies $f*_kg(x)=0$. This completes the proof of the Lemma.
\end{proof}
Coming back to the proof for $\mathcal{T}_2$, by Lemma \ref{entering phi lemma}, from (\ref{estimate of T2}) we get
\begin{eqnarray*}
 && \mathcal{T}_2(f_1,f_2)(x)\\
 &=& \sum\limits_{{\bf n}_1\in\mathbb{Z}^d}\sum\limits_{{\bf n}_2\in\mathbb{Z}^d}\sum\limits_{j\in\mathbb{Z}}c_j({\bf n}_1,{\bf n}_2)\, \Phi(0,D_k/2^j)\left(\tilde\psi(2\pi {\bf n}_1,D_k/2^j)f_1(\cdot)\,\tilde\phi(2\pi {\bf n}_2,D_k/2^j)f_2(\cdot)\right)(x).
 \end{eqnarray*}
To prove $L^p$ boundedness of $\mathcal{T}_2$, take $g\in \mathcal{S}(\mathbb{R}^d)$  with $||g||_{L^{p'}(d\mu_k)}=1 $ where $1/p+1/p'=1$, then using {color{brown } Plancherel} formula for Dunkl transform, we have
\begin{eqnarray*}
&&\int_{\mathbb{R}^d}\mathcal{T}_2(f_1,f_2)(x)\,g(x)\,d\mu_k(x)\\ 
&=& \int_{\mathbb{R}^d}\sum\limits_{{\bf n}_1\in\mathbb{Z}^d}\sum\limits_{{\bf n}_2\in\mathbb{Z}^d}\sum\limits_{j\in\mathbb{Z}}c_j({\bf n}_1,{\bf n}_2)\,\\
&&\times \Phi(0,D_k/2^j)\left(\tilde\psi(2\pi {\bf n}_1,D_k/2^j)f_1(\cdot)\,\tilde\phi(2\pi {\bf n}_2,D_k/2^j)f_2(\cdot)\right)(x)\,
 g(x)\,d\mu_k(x)\\
&=&\sum\limits_{{\bf n}_1\in\mathbb{Z}^d}\sum\limits_{{\bf n}_2\in\mathbb{Z}^d}\sum\limits_{j\in\mathbb{Z}}c_j({\bf n}_1,{\bf n}_2)\,\\
&&\times\Big(\int_{\mathbb{R}^d}\Phi(0,D_k/2^j)\left(\tilde\psi(2\pi {\bf n}_1,D_k/2^j)f_1(\cdot)\,\tilde\phi(2\pi {\bf n}_2,D_k/2^j)f_2(\cdot)\right)(x) g(x)\,d\mu_k(x)\,\Big)\\
&=&\sum\limits_{{\bf n}_1\in\mathbb{Z}^d}\sum\limits_{{\bf n}_2\in\mathbb{Z}^d}\sum\limits_{j\in\mathbb{Z}}c_j({\bf n}_1,{\bf n}_2)\, \Big(\int_{\mathbb{R}^d}\tilde\psi(2\pi {\bf n}_1,D_k/2^j)f_1(x)\,\tilde\phi(2\pi {\bf n}_2,D_k/2^j)f_2(x)\\
&&\times \Phi(0,D_k/2^j)g(x)\,d\mu_k(x)\,\Big).
\end{eqnarray*}

Again, using Cauchy-Schwarz inequality, H\"older's inequality, decay condition (\ref{decay of Fourier coeff}), Theorem \ref{Littlewood-Paley l2 thm} and Theorem \ref{Littlewood-Plaey l infinty thm} and using the facts that $L>2d+2\lfloor d_k \rfloor+4$ and $n=\lfloor d_k \rfloor+2$, we have
\begin{eqnarray*}
&&\Big |\int_{\mathbb{R}^d}\mathcal{T}_2(f_1,f_2)(x)\,g(x)\,d\mu_k(x)\Big|\\  
&=& C\!\sum\limits_{{\bf n}_1\in\mathbb{Z}^d}\sum\limits_{{\bf n}_2\in\mathbb{Z}^d} \!
\frac{1}{(1+|{\bf n}_1|+|{\bf n}_2|)^{L}} \,\Big\|\!\Big(\!\sum\limits_{j\in \mathbb{Z}}|\tilde\psi(2\pi {\bf n}_1,D_k/2^j)f_1\, \tilde\phi(2\pi {\bf n}_2,D_k/2^j)f_2|^2\!\Big)^{1/2}\!\Big\|_{L^{p}(d\mu_k)}\\
&&\times \Big(\sum\limits_{j\in \mathbb{Z}}|\Phi(0,D_k/2^j)g|^2\Big)^{1/2}\Big\|_{L^{p'}(d\mu_k)}\\
&\leq& C\, ||g||_{L^{p'}(d\mu_k)}\sum\limits_{{\bf n}_1\in\mathbb{Z}^d}\sum\limits_{{\bf n}_2\in\mathbb{Z}^d} 
\frac{1}{(1+|{\bf n}_1|+|{\bf n}_2|)^{L}} \,\Big\|\Big(\sum\limits_{j\in \mathbb{Z}}|\tilde\psi(2\pi {\bf n}_1,D_k/2^j)f_1|^2\Big)^{1/2}\Big\|_{L^{p_1}(d\mu_k)}\\
  &&\times \Big\|\sup\limits_{j\in\mathbb{Z}}|\tilde\phi(2\pi {\bf n}_2,D_k/2^j)f_2\Big\|_{L^{p_2}(d\mu_k)}\\
  &\leq & ||f_1||_{L^{p_1}(d\mu_k)}\, ||f_2||_{L^{p_2}(d\mu_k)}\sum\limits_{{\bf n}_1\in\mathbb{Z}^d}\sum\limits_{{\bf n}_2\in\mathbb{Z}^d} 
\frac{(|1+|{\bf n}_1|)^{n}(|1+|{\bf n}_2|)^{n}}{(1+|{\bf n}_1|+|{\bf n}_2|)^{L}}\\
&\leq&C\, ||f_1||_{L^{p_1}(d\mu_k)}\, ||f_2||_{L^{p_2}(d\mu_k)}
\end{eqnarray*}
Hence the proof for $\mathcal{T}_2$ is completed.\\\\
\underline{Estimate of $\mathcal{T}_3$:}\\\\
The estimate for $\mathcal{T}_3$ follows exactly in the same as in the case of $\mathcal{T}_2$ hence it is omitted.
\end{proof}
Finally we will present the proofs of the weighted inequalities for bilinear Dunkl multipliers.  
\begin{proof}[Proofs of Theorem \ref{two weight bilinear multi} and Theorem \ref{one weight bilinear multi}]
 Let $\phi\in C^{\infty}(\mathbb{R}^{2d})$ be such that $supp\, \phi \subset \{(\xi,\eta)\in\mathbb{R}^d\times \mathbb{R}^d: 1/4\leq \big(|\xi|^2+|\eta|^2\big)^{1/2}\leq 4\}$ and 
$$\sum\limits_{j\in\mathbb{Z}}\phi(\xi/2^j,\eta/2^j)=1 \text{ for all } (\xi,\eta)\neq (0,0)$$
where we recall that the notation $|\cdot|$ stands for the usual norm on $\mathbb{R}^{d}$. Then
\begin{eqnarray*}
    {\bf m}(\xi,\eta)&=&\sum\limits_{j\in\mathbb{Z}} {\bf m}(\xi,\eta)\,\phi(\xi/2^j,\eta/2^j)\\
    &=:&\sum\limits_{j\in\mathbb{Z}} {\bf m}_j(\xi/2^j,\eta/2^j).
\end{eqnarray*}
\par In view of the multilinear Dunkl setting defined in Section \ref{statement main theorem}, for $x, y_1, y_2\in \mathbb{R}^d$, let us define
$$K_j(x,y_1,y_2)=\tau^{k^2}_{(x,x)}\mathcal{F}_{k^2}^{-1}\big({\bf m}(\cdot,\cdot)\,\phi(\cdot/2^j,\cdot/2^j)\big)\big((-y_1,-y_2)\big)$$
$$\text{ and }\widetilde{K}_j(x,y_1,y_2)=\tau^{k^2}_{(x,x)}\mathcal{F}_{k^2}^{-1}\, {\bf m}_j\big((-y_1,-y_2)\big).$$
Then $K_j(x,y_1,y_2)=2^{2jd_k}\widetilde{K}_j(2^jx,2^jy_1,2^jy_2)$ for all $x, y_1, y_2\in \mathbb{R}^d$ and for all $f_1,f_2\in \mathcal{S}(\mathbb{R}^d)$,
\begin{eqnarray*}
   \mathcal{T}_{\bf m}(f_1,f_2)(x)&=&\int_{\mathbb{R}^{2d}}{\bf m}(\xi,\eta)\mathcal{F}_kf_1(\xi)\mathcal{F}_kf_2(\eta)E_k(ix,\xi)E_k(ix,\eta)\,d\mu_k(\xi)d\mu_k(\eta)\\
   &=&\sum\limits_{j\in\mathbb{Z}}\int_{\mathbb{R}^{2d}}{\bf m}_j(\xi/2^j,\eta/2^j)\mathcal{F}_{k^2}\big(f_1\otimes f_2\big)\big((\xi,\eta)\big)\, E_{k^2}\big(i(x,x), (\xi,\eta)\big)\\
   &&\times d\mu_{k^2}\big((\xi,\eta)\big)\\
    &=&\sum\limits_{j\in\mathbb{Z}}\int_{\mathbb{R}^{2d}}K_j(x,y_1,y_2)\,f_1(y_1)f_2(y_2)\,d\mu_k(y_1)d\mu_k(y_2)\\
    &=:&\int_{\mathbb{R}^{2d}}K(x,y_1,y_2)\,f_1(y_1)f_2(y_2)\,d\mu_k(y_1)d\mu_k(y_2).
\end{eqnarray*}
Having Theorem \ref{bilinear multi main thm} already proved, proofs of Theorem \ref{two weight bilinear multi} and Theorem \ref{one weight bilinear multi} will follow directly from Theorem \ref{two weight calderon Zygmund} and Theorem \ref{one weight calderon Zygmund},
if we can show that integral kernel $K$ of $\mathcal{T}_{\bf m}$, satisfies the  size estimate (\ref{size estimate of kernel}) and smoothness estimates (\ref{smoothness estimate of kernel}) for $m=2.$ For that, we need to show that for any $x, x', y_1, y_2\in \mathbb{R}^d$,
\begin{eqnarray}\label{multiplier size estimate last}
&&|K(x,y_1,y_2)|\\
&\leq& C\,\Big[\mu_k\big(B(x, d_G(x,y_1))\big)+\mu_k\big(B(x, d_G(x,y_2))\big)\Big]^{-2}\frac{d_G(x,y_1)+d_G(x,y_2)}{|x-y_1|+|x-y_2|}\nonumber
\end{eqnarray}
 for $d_G(x,y_1)+ d_G(x,y_2)>0;$
\begin{eqnarray}\label{multiplier smtness estimate last y2 changing}
    && |K(x,y_1,y_2)-K(x,y_1,y'_2)|\\
&\leq& C\,\Big[\mu_k\big(B(x, d_G(x,y_1))\big)+\mu_k\big(B(x, d_G(x,y_2))\big)\Big]^{-2} \frac{|y_2-y'_2|}{\max\{|x-y_1|,\,|x-y_2|\}}\nonumber
\end{eqnarray}
for $|y_2-y'_2|<\max\{d_G(x,y_1)/2,\, d_G(x,y_2)/2\}$;
\begin{eqnarray}\label{multiplier smtness estimate last y1 changing}
    && |K(x,y_1,y_2)-K(x,y'_1,y_2)|\\
&\leq& C\,\Big[\mu_k\big(B(x, d_G(x,y_1))\big)+\mu_k\big(B(x, d_G(x,y_2))\big)\Big]^{-2} \frac{|y_1-y'_1|}{\max\{|x-y_1|,\,|x-y_2|\}}\nonumber
\end{eqnarray}
for $|y_1-y'_1|<\max\{d_G(x,y_1)/2,\, d_G(x,y_2)/2\}$\\
and 
\begin{eqnarray}\label{multiplier smtness estimate last x changing}
 && |K(x,y_1,y_2)-K(x',y_1,y_2)|\\
&\leq& C\,\Big[\mu_k\big(B(x, d_G(x,y_1))\big)+\mu_k\big(B(x, d_G(x,y_2))\big)\Big]^{-2} \frac{|x-x'|}{\max\{|x-y_1|,\,|x-y_2|\}}\nonumber
\end{eqnarray}
for $|x-x'|<\max\{d_G(x,y_1)/2,\, d_G(x,y_2)/2\}$.\\\\
\underline{Proof of the inequality (\ref{multiplier size estimate last})}\\\\
The condition (\ref{usual deriv condtn m}) assures that 
\begin{equation}\label{Lth smooth condtn for mj}
\sup\limits_{j\in \mathbb{Z}}\|{\bf m}_j\|_{C^L(\mathbb{R}^{2d})}\leq C.
\end{equation}
Since 
$$\widetilde{K}_j(x,y_1,y_2)=\int_{\mathbb{R}^{2d}}{\bf m}_j(\xi,\eta)\, E_{k^2}\big(i(\xi,\eta),\,(x,x)\big)E_{k^2}\big(-i(\xi,\eta),\,(y_1,y_2)\big)\,d\mu_{k^2}\big((\xi,\eta)\big),$$
by applying \cite[eq.(4.30)]{HejnaRODTONRK} for $\mathbb{R}^{2d}$, we write 
\begin{eqnarray*}
    |\widetilde{K}_j(x,y_1,y_2)|
    &\leq& \frac{C}{\big[\mu_{k^2}\big(B((x,x), 1)\big) \mu_{k^2}\big(B((y_1,y_2), 1)\big)\big]^{1/2}}\\
    &&\times \frac{1}{1+\big(|x-y_1|^2+|x-y_2|^2\big)^{1/2}}\, \frac{1}{\big[1+d_{G\times G}\big((x,x),\, (y_1,y_2)\big)\big]^{L-1}}.
\end{eqnarray*}
Therefore, using (\ref{VOLRADREL}) for $\mathbb{R}^{2d}$ we get
\begin{eqnarray}\label{dividing kernel of multilplier in two sums}
&&|K(x, y_1, y_2)| \\
&\leq& \sum\limits_{j\in \mathbb{Z}}|K_j(x,y_1,y_2)|\nonumber\\
&=&\sum\limits_{j\in \mathbb{Z}}2^{2jd_k}\,|\widetilde{K}_j(2^jx,2^jy_1,2^jy_2)|\nonumber\\
&\leq& \sum\limits_{j\in \mathbb{Z}}\frac{C\,2^{2jd_k}}{\big[\mu_{k^2}\big(B((2^jx,2^jx), 1)\big) \mu_{k^2}\big(B((2^jy_1,2^jy_2), 1)\big)\big]^{1/2}}\nonumber\\
    &&\times \frac{1}{1+2^j\,\big(|x-y_1|^2+|x-y_2|^2\big)^{1/2}}\, \frac{1}{\big[1+2^j\,d_{G\times G}\big((x,x),\, (y_1,y_2)\big)\big]^{L-1}}\nonumber\\
    &\leq& C\,\sum\limits_{j\in \mathbb{Z}}\frac{1}{\big[\mu_{k^2}\big(B((x,x), 2^{-j})\big) \mu_{k^2}\big(B((y_1,y_2), 2^{-j})\big)\big]^{1/2}}\nonumber\\
    &&\times \frac{1}{1+2^j\,\big(|x-y_1|^2+|x-y_2|^2\big)^{1/2}}\, \frac{1}{\big[1+2^j\,d_{G\times G}\big((x,x),\, (y_1,y_2)\big)\big]^{L-1}}\nonumber\\
    &=&C \sum\limits_{j\in \mathbb{Z}: \,2^j\,d_{G\times G}((x,x),\, (y_1,y_2))\leq 1} \cdots +\sum\limits_{j\in \mathbb{Z}:\,2^j\,d_{G\times G}((x,x),\, (y_1,y_2))>1}\cdots .\nonumber
\end{eqnarray}
Again, from the discussion in Section \ref{statement main theorem}, using the product nature of the root system, it is not too hard to see that
\begin{align}\label{comparison btween linear and bilinear objects}
\begin{split}
\mu_{k^2}\big(B((z_1,z_2), r)\big)\sim
\mu_k\big(B(z_1,r)\big)\,\mu_k\big(B(z_2,r)\big),\\
d_{G\times G}\big((z,z),\, (z_1,z_2)\big)\sim d_G(z,z_1)+d_G(z,z_2)\\
\text{ and }\mu_{k}\big(B(z, r_1+r_2)\big)\geq C\,[\mu_{k}\big(B(z, r_1)\big)+\mu_{k}\big(B(z, r_2)\big)]
\end{split}
\end{align}
for all $z, z_1, z_2\in \mathbb{R}^d$ and $r, r_1, r_2>0.$
\par Now, if $2^j\,d_{G\times G}((x,x),\, (y_1,y_2))\leq 1$, by applying (\ref{VOLRADREL}) for $\mathbb{R}^{2d}$ and the relations (\ref{comparison btween linear and bilinear objects}), we deduce 
\begin{eqnarray*}
    \frac{1}{\mu_{k^2}\big(B((x, x), 2^{-j})\big)} &\leq& C\,\frac{[2^j\,d_{G\times G}((x,x),\, (y_1,y_2))]^{2d} }{\mu_{k^2}\big(B((x,x), d_{G\times G}((x,x),\, (y_1,y_2)))\big) }\\
    &\leq& C\,\frac{[2^j\,d_{G\times G}((x,x),\, (y_1,y_2))]^{2d} }{[\mu_k(B(x,d_G(x,y_1)))+\mu_k(B(x,d_G(x,y_2)))]^2}
\end{eqnarray*}
and 
\begin{eqnarray*}
    \frac{1}{\mu_{k^2}\big(B((y_1, y_2), 2^{-j})\big)}&\leq& C\,\frac{[2^j\,d_{G\times G}((x,x),\, (y_1,y_2))]^{2d} }{\mu_{k^2}\big(B((y_1,y_2), d_{G\times G}((x,x),\, (y_1,y_2)))\big) }\\
     &\leq&C\,\frac{[2^j\,d_{G\times G}((x,x),\, (y_1,y_2))]^{2d} }{\mu_{k^2}\big(B((x,x), d_{G\times G}((x,x),\, (y_1,y_2)))\big) }\\
    &\leq& C\,\frac{[2^j\,d_{G\times G}((x,x),\, (y_1,y_2))]^{2d} }{[\mu_k(B(x,d_G(x,y_1)))+\mu_k(B(x,d_G(x,y_2)))]^2},
\end{eqnarray*}
where in the second last inequality we used (\ref{V(x,y,d(x,y) comparison}) for $\mathbb{R}^{2d}$ .
\par Using the above two inequalities in (\ref{dividing kernel of multilplier in two sums}), we have
\begin{eqnarray}\label{estimate of the sum d less than}
    &&\sum\limits_{j\in \mathbb{Z}: \,2^j\,d_{G\times G}((x,x),\, (y_1,y_2))\leq 1}\frac{1}{\big[\mu_{k^2}\big(B((x,x), 2^{-j})\big) \mu_{k^2}\big(B((y_1,y_2), 2^{-j})\big)\big]^{1/2}}\\
    &&\times \frac{1}{1+2^j\,\big(|x-y_1|^2+|x-y_2|^2\big)^{1/2}}\, \frac{1}{\big[1+2^j\,d_{G\times G}\big((x,x),\, (y_1,y_2)\big)\big]^{L-1}}\nonumber\\
    &\leq&  C\,\sum\limits_{j\in \mathbb{Z}: \,2^j\,d_{G\times G}((x,x),\, (y_1,y_2))\leq 1}\frac{[2^j\,d_{G\times G}((x,x),\, (y_1,y_2))]^{2d} }{[\mu_k(B(x,d_G(x,y_1)))+\mu_k(B(x,d_G(x,y_2)))]^2}\nonumber\\
    &&\times \frac{1}{2^j(|x-y_1|+|x-y_2|)}\nonumber\\
    &\leq&  C\,\Big[\mu_k\big(B(x, d_G(x,y_1)\big)+\mu_k\big(B(x, d_G(x,y_2)\big)\Big]^{-2}\frac{d_G(x,y_1)+d_G(x,y_2)}{|x-y_1|+|x-y_2|}\nonumber\\
    &&\times \sum\limits_{j\in \mathbb{Z}: \,2^j\,d_{G\times G}((x,x),\, (y_1,y_2))\leq 1}[2^j\,d_{G\times G}((x,x),\, (y_1,y_2))]^{2d-1} \nonumber\\
    &\leq&  C\,\Big[\mu_k\big(B(x, d_G(x,y_1)\big)+\mu_k\big(B(x, d_G(x,y_2)\big)\Big]^{-2}\frac{d_G(x,y_1)+d_G(x,y_2)}{|x-y_1|+|x-y_2|}.\nonumber
\end{eqnarray}
Similarly, for the second sum in (\ref{dividing kernel of multilplier in two sums}), where $2^j\,d_{G\times G}((x,x),\, (y_1,y_2))> 1$, we get
\begin{eqnarray}\label{estimate of the sum d>>}
   &&\sum\limits_{j\in \mathbb{Z}: \,2^j\,d_{G\times G}((x,x),\, (y_1,y_2))> 1}\frac{1}{\big[\mu_{k^2}\big(B((x,x), 2^{-j})\big) \mu_{k^2}\big(B((y_1,y_2), 2^{-j})\big)\big]^{1/2}}\\
    &&\times \frac{1}{1+2^j\,\big(|x-y_1|^2+|x-y_2|^2\big)^{1/2}}\, \frac{1}{\big[1+2^j\,d_{G\times G}\big((x,x),\, (y_1,y_2)\big)\big]^{L-1}}\nonumber\\
    &\leq&  C\,\sum\limits_{j\in \mathbb{Z}: \,2^j\,d_{G\times G}((x,x),\, (y_1,y_2))> 1}\frac{[2^j\,d_{G\times G}((x,x),\, (y_1,y_2))]^{2d_k} }{[\mu_k(B(x,d_G(x,y_1)))+\mu_k(B(x,d_G(x,y_2)))]^2}\nonumber\\
    &&\times \frac{1}{2^j(|x-y_1|+|x-y_2|)}\frac{1}{\big[2^j\,d_{G\times G}\big((x,x),\, (y_1,y_2)\big)\big]^{L-1}}\nonumber\\
    &\leq&  C\,\Big[\mu_k\big(B(x, d_G(x,y_1)\big)+\mu_k\big(B(x, d_G(x,y_2)\big)\Big]^{-2}\frac{d_G(x,y_1)+d_G(x,y_2)}{|x-y_1|+|x-y_2|}\nonumber\\
    &&\times \sum\limits_{j\in \mathbb{Z}: \,2^j\,d_{G\times G}((x,x),\, (y_1,y_2))> 1}\frac{1}{[2^j\,d_{G\times G}((x,x),\, (y_1,y_2))]^{L-2d_k}} \nonumber\\
    &\leq&  C\,\Big[\mu_k\big(B(x, d_G(x,y_1)\big)+\mu_k\big(B(x, d_G(x,y_2)\big)\Big]^{-2}\frac{d_G(x,y_1)+d_G(x,y_2)}{|x-y_1|+|x-y_2|}.\nonumber
\end{eqnarray}
Substituting (\ref{estimate of the sum d less than}) and (\ref{estimate of the sum d>>}) in (\ref{dividing kernel of multilplier in two sums}), we complete the proof of (\ref{multiplier size estimate last}).\\\\
\underline{Proof of the inequality (\ref{multiplier smtness estimate last y2 changing})}\\\\
It is easy to see that 
$$d_G(x, y_1)\leq d_{G\times G}((x,x),\, (y_1,y_2)) \text{ and }d_G(x, y_2)\leq d_{G\times G}((x,x),\, (y_1,y_2)).$$
So the condition $|y_2-y'_2|<\max\{d_G(x,y_1)/2,\, d_G(x,y_2)/2\}$ implies that the $2d$-norm \footnote{ We have used the same notation $|\cdot|$ for norms on $\mathbb{R}^d$ and $\mathbb{R}^{2d}$.}
\begin{eqnarray*}
|(y_1, y_2)-(y_1, y'_2)|&\leq& d_{G\times G}((x,x),\, (y_1,y_2))/2,
\end{eqnarray*}
which further implies 
$$d_{G\times G}\big((x, x), (y_1, y_2)\big)\sim d_{G\times G}\big((x, x), (y_1, y'_2)\big),\ |(x, x)-(y_1, y_2)|\sim |(x, x)-(y_1, y'_2)| \text{ and }$$ 
$$V_{G\times G}\big((x, x), (y_1, y_2), d_{G\times G}\big((x, x), (y_1, y_2)\big)\big)\sim V_{G\times G}\big((x, x), (y_1, y'_2), d_{G\times G}\big((x, x), (y_1, y'_2)\big)\big).$$
By applying the techniques used in the proof of Theorem \ref{Littlewood-Paley l2 thm} in $\mathbb{R}^{2d}$ and using (\ref{Lth smooth condtn for mj}) in place of (\ref{growth of u}), we get
\begin{eqnarray}\label{bilimear before using symmetry}
 && 2^{2jd_k}|\widetilde{K}_j(2^jx,2^jy_1,2^jy_2)-\widetilde{K}_j(2^jx,2^jy_1,2^jy'_2)|\\
     &\leq& C\,  \frac{|(y_1, y_2)-(y_1, y'_2)|}{\big|(x, x)- (y_1, y_2)\big|}\,\frac{\big(2^jd_{G\times G}\big((x, x), (y_1, y_2)\big)\big)^{2d} +\big(2^jd_{G\times G}\big((x, x), (y_1, y_2)\big)\big)^{2d_k}}{V_{G\times G}\big((x, x), (y_1, y_2), d_{G\times G}\big((x, x), (y_1, y_2)\big)\big)}\nonumber\\
     &&\times \frac{1}{\left(1+2^jd_{G\times G}\big((x, x), (y_1, y_2)\right)^{L-1}}.\nonumber
\end{eqnarray}
Hence, using (\ref{V(x,y,d(x,y) comparison}) for $\mathbb{R}^{2d}$, and (\ref{comparison btween linear and bilinear objects}) repeatedly, from (\ref{bilimear before using symmetry}) we have
\begin{eqnarray*}
&& |K(x,y_1,y_2)-K(x,y_1,y'_2)|\\
&\leq& \sum\limits_{j\in \mathbb{Z}}2^{2jd_k}|\widetilde{K}_j(2^jx,2^jy_1,2^jy_2)-\widetilde{K}_j(2^jx,2^jy_1,2^jy'_2)|\\
 &\leq& C\,  \frac{|(y_1, y_2)-(y_1, y'_2)|}{\big|(x, x)- (y_1, y_2)\big|}\,\frac{1}{V_{G\times G}\big((x, x), (y_1, y_2), d_{G\times G}\big((x, x), (y_1, y_2)\big)\big)}\nonumber\\
     &&\times\sum\limits_{j\in \mathbb{Z}} \frac{\big(2^jd_{G\times G}\big((x, x), (y_1, y_2)\big)\big)^{2d} +\big(2^jd_{G\times G}\big((x, x), (y_1, y_2)\big)\big)^{2d_k}}{\left(1+2^jd_{G\times G}\big((x, x), (y_1, y_2)\right)^{L-1}}\\
     &\leq& C\,  \frac{|y_2- y'_2|}{|x-y_1|+|x-y_2|}\,\frac{1}{\mu_{k^2}\big(B((x, x), d_{G\times G}((x, x), (y_1, y_2)))\big)}\\
     &&\times\sum\limits_{j\in \mathbb{Z}} \frac{\big(2^jd_{G\times G}\big((x, x), (y_1, y_2)\big)\big)^{2d} +\big(2^jd_{G\times G}\big((x, x), (y_1, y_2)\big)\big)^{2d_k}}{\left(1+2^jd_{G\times G}\big((x, x), (y_1, y_2)\right)^{L-1}}\\
     &\leq& C\,  \frac{|y_2- y'_2|}{|x-y_1|+|x-y_2|}\,\frac{1}{\big[\mu_k\big(B(x, d_{G\times G}((x, x), (y_1, y_2)))\big)\big]^2}\\
     &&\times\sum\limits_{j\in \mathbb{Z}} \frac{\big(2^jd_{G\times G}\big((x, x), (y_1, y_2)\big)\big)^{2d} +\big(2^jd_{G\times G}\big((x, x), (y_1, y_2)\big)\big)^{2d_k}}{\left(1+2^jd_{G\times G}\big((x, x), (y_1, y_2)\right)^{L-1}}\\
     &\leq& C\,  \frac{|y_2- y'_2|}{\max\{|x-y_1|,\,|x-y_2|\}}\,\frac{1}{\big[\mu_k\big(B(x, d_{G}(x, y_1))\big)+\mu_k\big(B(x, d_{G}(x, y_2))\big) \big]^2}\\
     &&\times\sum\limits_{j\in \mathbb{Z}} \frac{\big(2^jd_{G\times G}\big((x, x), (y_1, y_2)\big)\big)^{2d} +\big(2^jd_{G\times G}\big((x, x), (y_1, y_2)\big)\big)^{2d_k}}{\left(1+2^jd_{G\times G}\big((x, x), (y_1, y_2)\right)^{L-1}}\\
      &\leq& C\,{\big[\mu_k\big(B(x, d_{G}(x, y_1))\big)+\mu_k\big(B(x, d_{G}(x, y_2))\big) \big]^{-2}}\frac{|y_2- y'_2|}{\max\{|x-y_1|,\,|x-y_2|\}},
\end{eqnarray*}
where the convergence of the last sum can be shown in exact same way as in the proof of Theorem \ref{Littlewood-Paley l2 thm}.\\\\
\underline{Proof of the inequality (\ref{multiplier smtness estimate last y1 changing})}\\\\
The proof of (\ref{multiplier smtness estimate last y1 changing}) is exactly the same as the proof of (\ref{multiplier smtness estimate last y2 changing}) with interchange of the roles of $y_1$ and $y_2$.\\\\
\underline{Proof of the inequality (\ref{multiplier smtness estimate last x changing})}\\\\
 Note that $\widetilde{K}_j$ can be written as 
 $$\widetilde{K}_j(x,y_1,y_2)=\int_{\mathbb{R}^{2d}}{\bf m}_j(\xi,\eta)\, E_{k^2}\big(i(\xi,\eta), (-y_1,-y_2)\big)E_{k^2}\big(-i(\xi,\eta), (-x,-x)\big)\,d\mu_{k^2}\big((\xi,\eta)\big),$$
Now, the condition $|x-x'|<\max\{d_G(x,y_1)/2,\, d_G(x,y_2)/2\}$ implies that the $2d$-norm
\begin{eqnarray*}
|(x, x)-(x', x')|&\leq& \sqrt{2}\, d_{G\times G}((x,x),\, (y_1,y_2))/2,
\end{eqnarray*}
 which further implies
$$d_{G\times G}\big((x, x), (y_1, y_2)\big)\sim d_{G\times G}\big((x', x'), (y_1, y_2)\big),\ |(x, x)-(y_1, y_2)|\sim |(x', x')-(y_1, y_2)| \text{ and }$$ 
$$V_{G\times G}\big((x, x), (y_1, y_2), d_{G\times G}\big((x, x), (y_1, y_2)\big)\big)\sim V_{G\times G}\big((x', x'), (y_1, y_2), d_{G\times G}\big((x', x'), (y_1, y_2)\big)\big).$$
Thus in this case also, rest of the proof can be carried forward in the same way as in the proof of (\ref{multiplier smtness estimate last y2 changing}).
\end{proof}

\bibliographystyle{abbrv}

\bibliography{bibliog}

\end{document}